\numberwithin{equation}{section}
\newtheorem{thm}{Theorem}[section]
\newtheorem{lem}[thm]{Lemma}
\newtheorem{prop}[thm]{Proposition}
\theoremstyle{remark}
\newtheorem{rem}[thm]{Remark}
\theoremstyle{definition}
\renewcommand{\Re}{\mathrm{Re}}
\renewcommand{\Im}{\mathrm{Im}}
\newcommand{\R}{\mathbb{R}}
\newcommand{\Z}{\mathbb{Z}}
\newcommand{\C}{\mathbb{C}}
\newcommand{\Sp}{\mathbb{S}}
\renewcommand{\H}{\mathbb{H}}
\newcommand{\cK}{\mathcal{K}}
\newcommand{\cN}{\mathcal{N}}
\newcommand{\Ec}{E_{\mathrm{crit}}}
\newcommand{\Lts}{L^{3,6}_\theta}
\newcommand{\Lst}{L^{6,3}_\theta}
\DeclareMathOperator{\diag}{diag}
\DeclareMathOperator{\supp}{supp}
\title{A controlling norm for Energy-critical Schr\"odinger maps}
\author[B. Dodson]{Benjamin Dodson}
\address{University of California, Berkeley}
\email{benjadod@math.berkeley.edu}
\author[P. Smith]{Paul Smith}
\address{University of California, Berkeley}
\email{smith@math.berkeley.edu}
\thanks{The first author was supported by NSF grant DMS-1103914
and the second by NSF grant DMS-1103877.}
\begin{document}

\begin{abstract}
We consider energy-critical Schr\"odinger maps with target either the sphere $\Sp^2$
or hyperbolic plane $\H^2$ and establish that a unique solution may be continued so
long as a certain space-time $L^4$ norm remains bounded. This reduces the large data 
global wellposedness problem to that of controlling this norm.
\end{abstract}

\maketitle

\tableofcontents

\section{Introduction}

We consider the Schr\"odinger map equation on $\R^{2+1}$
with target either the sphere $\Sp^2$ or hyperbolic plane $\H^2$.
With the appropriate modeling we may interpret
$\Sp^2$ and $\H^2$ as submanifolds of $\R^3$, e.g.,
\[
\begin{split}
\Sp^2 &= \{ y = (y_0, y_1, y_2) \in \R^3: y_0^2 + y_1^2 + y_2^2 = 1 \} \\
\H^2 &= \{y = (y_0, y_1, y_2) \in \R^3: -y_0^2 - y_1^2 + y_2^2 = 1, y_2 \geq 0 \}
\end{split}
\]
with the Riemannian structures induced by the Euclidean metric
$dy_0^2 + dy_1^2 + dy_2^2$ in the case of $\Sp^2$ and by the
Minkowski metric $-dy_0^2 + dy_1^2 + dy_2^2$ in the case of $\H^2$.
Setting $\eta_\mu := \diag(1,1,\mu)$, we define the cross product
$\times_\mu$ by $v \times_\mu w := \eta_\mu \cdot (v \times w)$.
The Schr\"odinger map initial-value problem then takes the form
\begin{equation} \label{SM}
\partial_t \phi = \phi \times_\mu \Delta \phi ,
\quad \quad
\phi(0, x) = \phi_0(x)
\end{equation}
where $\phi$ is assumed to take values in $\Sp^2$ or $\H^2$ according
to whether $\mu = +1$ or $\mu = -1$, respectively.
Schr\"odinger maps admit the conserved energy
\begin{equation} \label{energy}
E(\phi) := \frac12 \int_{\R^2} \lvert \nabla \phi \rvert^2_\mu dx
\end{equation}
and both the equation \eqref{SM} and the energy \eqref{energy} are invariant 
with respect to scalings
\[
\phi(t, x) \mapsto \phi(\lambda^2 t, \lambda x)
\]
The problem we study is therefore \emph{energy critical}.

When the target is $\Sp^2$, Schr\"odinger maps arise as a 
Heisenberg model, i.e., a nearest-neighbor spin model, of ferromagnetism \cite{SuSuBa86}.
The Schr\"odinger map equation is also known as the Landau-Lifshitz equation. See
\cite{La67, SuSuBa86, PaTo91, MaPaSo94, ChShUh00, NaStUh03} and
the references therein.

For the local theory, see \cite{Mc07, SuSuBa86}.
\begin{thm}[Local existence] \label{thm:local}
If $\phi_0 \in \dot{H}^1 \cap \dot{H}^3$, then there exists a time $T > 0$
such that \eqref{SM} has a unique solution in $L^\infty_t([0, T] : \dot{H}^1 \cap \dot{H}^3)$.
\end{thm}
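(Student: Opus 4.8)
\emph{Proof proposal.} The plan is to reformulate \eqref{SM} as a genuinely semilinear Schr\"odinger system and then argue by the standard energy (or contraction-mapping) method. The reformulation is needed because \eqref{SM} is quasilinear: its principal term $\phi\times_\mu\Delta\phi$ puts two derivatives on $\phi$ against a coefficient --- the operator $v\mapsto\phi\times_\mu v$ --- that depends on the unknown. A naive energy estimate exhibits the obstruction: differentiating \eqref{SM}, pairing $\partial^\alpha\phi$ against itself in the bilinear form $\langle v,w\rangle_\mu:=v\cdot\eta_\mu w$, and using that $v\mapsto\phi\times_\mu v$ is skew-adjoint for $\langle\cdot,\cdot\rangle_\mu$ (since $\eta_\mu^2=\mathrm{Id}$ forces $\langle\phi\times_\mu v,v\rangle_\mu=(\phi\times v)\cdot v=0$), one integrates by parts the two top derivatives only to be left with a term of the shape $\langle\nabla\phi\times_\mu\nabla\partial^\alpha\phi,\partial^\alpha\phi\rangle_\mu$ that still carries one derivative too many, and that the determinant identities will not let one remove. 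To get around this loss of derivatives, typical of variable-coefficient Schr\"odinger flows, I would exploit the K\"ahler structure of the target: pass to a moving orthonormal frame for $\phi^*T\mathcal M$ and recast \eqref{SM} as a semilinear system for the derivative fields, in the manner of Chang--Shatah--Uhlenbeck \cite{ChShUh00}. (An alternative, closer to \cite{Mc07}, is to approximate \eqref{SM} by a scheme of maps whose flows are better behaved and pass to the limit.)

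\textbf{Gauge set-up.} I would fix $Q\in\mathcal M$ and, along $\phi$, an orthonormal frame $(e_1,e_2)$ of $\phi^*T\mathcal M$ with $e_2$ obtained from $e_1$ by the target's complex structure, subject to a gauge condition --- say the Coulomb condition $\sum_j\partial_jA_j=0$ on the connection coefficients $A_j:=\langle\partial_je_1,e_2\rangle$, so that the $A_j$ are recovered from the derivative fields through an elliptic div--curl system whose source (the pulled-back curvature) is quadratic in those fields, with no loss of derivatives. Writing $\partial_k\phi=\psi_k^1e_1+\psi_k^2e_2$ and $\psi_k:=\psi_k^1+i\psi_k^2$, a standard computation yields a system
\[
i\partial_t\psi_k+\Delta\psi_k=F_k(\psi,A[\psi]),\qquad k=1,2,
\]
whose nonlinearity $F_k$ is a finite sum of terms $A_j\partial_j\psi_k$, $(\partial_jA_j)\psi_k$, $|A|^2\psi_k$ and curvature terms cubic in $\psi$ --- genuinely semilinear. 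Under this correspondence, $\phi\in\dot H^1\cap\dot H^3$ matches $\psi=(\psi_1,\psi_2)\in L^2\cap\dot H^2$ together with a controlled frame, and $\phi\mapsto(\psi,e)$ and its inverse (integrate the frame equations in $x$, then solve \eqref{SM} as an ODE driven by $\psi$, preserving the target constraint) are bi-Lipschitz on the relevant sets.

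\textbf{Solving the semilinear system.} For the $\psi$-system I would use the standard energy method: the nonlinearity is at most cubic, $A[\psi]$ gains a derivative over $|\psi|^2$, and every term is controlled by H\"older and two-dimensional Gagliardo--Nirenberg estimates --- in particular $\|f\|_{L^\infty}\lesssim\|f\|_{L^2}^{1/2}\|f\|_{\dot H^2}^{1/2}$, which is exactly why the two endpoints $\dot H^1,\dot H^3$ (equivalently $\psi\in L^2\cap\dot H^2$; the intermediate norm comes by interpolation) suffice. Since the conserved energy \eqref{energy} pins down $\|\psi\|_{L^2}$, only $\|\psi\|_{\dot H^2}$ must be propagated, and a Gr\"onwall inequality $\tfrac{d}{dt}\|\psi\|_{\dot H^2}^2\le C(E(\phi))\,\|\psi\|_{\dot H^2}^2\,(1+\|\psi\|_{\dot H^2}^p)$ gives a lifespan $T=T(\|\phi_0\|_{\dot H^1\cap\dot H^3})>0$ and a bound; uniqueness for $\psi$ is immediate. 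Transporting back through the correspondence yields $\phi\in L^\infty_t([0,T]:\dot H^1\cap\dot H^3)$ solving \eqref{SM} and taking values on the target.

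\textbf{Uniqueness, and the main obstacle.} For uniqueness, given two solutions $\phi,\tilde\phi$ on $[0,T]$ with $\phi(0)=\tilde\phi(0)$, I would apply the gauge construction to each: both produce solutions of the semilinear $\psi$-system with the same data, hence they coincide, and undoing the gauge --- the residual gauge freedom being fixed by the common data at $t=0$ --- forces $\phi=\tilde\phi$. The hard part will be the gauge step: producing the frame with adequate regularity, checking that the Coulomb condition makes the connection coefficients controllable without derivative loss, and establishing the bi-Lipschitz correspondence $\phi\leftrightarrow(\psi,e)$ on $\dot H^1\cap\dot H^3$; once the flow is in semilinear form the remaining existence, uniqueness and persistence-of-regularity arguments are routine. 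A secondary technical nuisance --- harmless at this regularity, which sits well above scaling --- is the borderline character of some two-dimensional estimates for the gauge potentials, and, for the $\H^2$ target, the non-compactness of the range.
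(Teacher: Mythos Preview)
The paper does not prove this theorem; it states it as background and refers to \cite{Mc07, SuSuBa86} for the local theory. Your sketch --- passing to a moving frame in Coulomb gauge to render the system semilinear and then running a standard energy argument --- is in the spirit of \cite{ChShUh00} (which the paper invokes for the differentiated formulation, though not specifically for local existence), and you correctly flag the alternative approximation route of \cite{Mc07}. At regularity $\dot H^1\cap\dot H^3$, well above scaling, the borderline two-dimensional issues you mention for the gauge potentials are indeed manageable, so the outline is sound and consistent with the cited literature.
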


The small data problem with the sphere as target has been intensely studied,
see \cite{Be08a, Be08b, BeIoKe07, BeIoKeTa11, ChShUh00, IoKe06, IoKe07b}.
Global wellposedness for data with small energy is established in \cite{BeIoKeTa11}.
In \cite{Sm10} this result is extended to data small in the scale invariant Besov space $\dot{B}^1_{2, \infty}$,
assuming control on a certain space-time $L^4$ norm (the same that we assume in this article);
the result still holds even when the assumption of space-time $L^4$ control is dropped \cite{Sm11}.
Large data results have been established in some special settings.
Global wellposedness and scattering hold for radial Schr\"odinger maps into the sphere \cite{GuKo11},
though interestingly the problem is open for radial maps into $\H^2$.
Global wellposedness and scattering also hold for equivariant maps into the sphere
with energy less than $4\pi$, see \cite{BeIoKeTa11b}; the same hold true for
equivariant maps into $\H^2$ with finite energy \cite{BeIoKeTa12}.

Instead of working with \eqref{SM} directly, we work at the level of the differentiated system.
This leads to a system with coupled $L^2$-critical covariant Schr\"odinger equations
that exhibits gauge invariance. The details of this approach, first undertaken
in \cite{ChShUh00} in the context of Schr\"odinger map wellposedness problems, are presented in the next section.
The differentiated system cannot be wellposed unless the gauge freedom is eliminated
by making a gauge choice. We adopt the caloric gauge, which was first introduced in \cite{Tao04}
to study wave maps and then used for Schr\"odinger maps (with small energy) for the first time in \cite{BeIoKeTa11}.
Details of the construction for the target $\H^2$ are established in \cite{T4} and, 
for bounded geometry settings up to the ground state, in \cite{Sm09}.
We describe the caloric gauge construction in the next section and discuss
the bounds available in that gauge.

Loosely speaking, our main result is that a Schr\"odinger map $\phi$ may be continued
in time so long as the $\ell^2 L^4$ norm of $\nabla \phi$ is finite. The $\ell^2 L^4$ norm
for functions $f$ is defined by
\[
\lVert f \rVert_{\ell^2 L^4} := \left( \sum_{k \in \Z} \lVert P_k f \rVert_{L^4_{t,x}}^2 \right)^{\frac12}
\]
where $P_k$ denote standard Littlewood-Paley projections to dyadic frequency shells of
size $\sim 2^k$. From this definition it is clear that we have the embedding
$\ell^2 L^4 \hookrightarrow L^4$. For technical reasons, our theorem applies to maps
with energy less than the energy of the ground state, denoted by $\Ec$, which is
$4 \pi$ when the target is $\Sp^2$ and $+\infty$ when the target is $\H^2$.
Also for technical reasons, we work with maps $\phi$ that have finite mass, i.e.,
\[
\int_{\R^2} |\phi - Q|^2 dx < \infty
\]
where $Q \in M$ is a fixed point of $M \in \{\Sp^2, \H^2\}$. This quantity is not scale
invariant but is preserved by the flow.

\begin{thm}[Main result] \label{thm:maintheorem}
Let $I = [t_0, t_1] \subset \R$ with $t_0 < t_1$ and let $0 < \varepsilon \ll 1$.
If $\dot{H}^1 \cap \dot{H}^3 \ni \phi_0 = \phi(t_0) : \R^2 \to M$, $M \in \{\Sp^2, \H^2\}$, and
$\phi$ is a solution of \eqref{SM} on $I$ with finite mass and energy $E(\phi) < \Ec$ satisfying
$\lVert \nabla \phi \rVert_{\ell^2 L^4_{t,x}(I \times \R^2)} \leq \varepsilon \ll 1$,
then there exists a time $0 < T = T(\varepsilon)$ such that \eqref{SM}
has a unique solution in $L^\infty_t([t_0, t_1 + T] : \dot{H}^1 \cap \dot{H}^3)$.
\end{thm}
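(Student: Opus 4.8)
The plan is to reduce the continuation statement to an a priori estimate and then invoke Theorem~\ref{thm:local}. It suffices to bound $\|\phi(t)\|_{\dot{H}^1 \cap \dot{H}^3}$ for all $t \in I$ by a finite quantity depending only on $\varepsilon$, $E(\phi)$, the mass of $\phi$, and $\|\phi(t_0)\|_{\dot{H}^1 \cap \dot{H}^3}$: given such a bound, Theorem~\ref{thm:local} applied at $t_1$ produces a unique $\dot{H}^1 \cap \dot{H}^3$ solution on $[t_1, t_1 + T]$ with $T$ determined by $\|\phi(t_1)\|_{\dot{H}^1 \cap \dot{H}^3}$, and this agrees with $\phi$ on the overlap by local uniqueness. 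The $\dot{H}^1$ part of the bound is immediate from conservation of energy, so the real content is the propagation of $\dot{H}^3$ regularity.

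The first genuine step is to pass to the differentiated system in the caloric gauge. Since $E(\phi) < \Ec$ and $\phi$ has finite mass, the harmonic map heat flow issuing from $\phi(t)$ has no bubbling and converges as $s \to \infty$ to a fixed point $Q \in M$, uniformly in $t \in I$; hence the caloric gauge of \cite{BeIoKeTa11, T4, Sm09} is available on $I$, producing connection coefficients $A_x, A_t$ (with $A_s = 0$) and fields $\psi_x, \psi_s$ on the parabolic half-space, all obeying the standard heat-gauge bounds. Restricting to $s = 0$ and writing $\psi = \psi_x$, the Schr\"odinger map becomes a system of coupled $L^2$-critical covariant Schr\"odinger equations $(i\partial_t + \Delta)\psi_m = N_m(\psi)$, where $N_m$ is cubic-and-higher in $(\psi, A)$, nonlocal through the heat-flow representation $A = A[\psi]$, and loses one derivative. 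Two facts will be used repeatedly: $\|\psi(t)\|_{L^2}^2 = 2E(\phi)$ is conserved, and orthonormality of the frame gives $|\psi| = |\nabla\phi|_\mu$ pointwise, so up to harmless frame commutators $\|\psi\|_{\ell^2 L^4(I)}$ is comparable to $\|\nabla\phi\|_{\ell^2 L^4(I)} \le \varepsilon$.

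I would then introduce the critical function space $\mathcal{S} = \mathcal{S}(I)$ for $\psi$, its two-derivatives-higher analogue $\mathcal{S}^2$, and companion spaces $\mathcal{N}, \mathcal{N}^2$ for the nonlinearity, as in the small-energy theory of \cite{BeIoKeTa11} and its $\ell^2 L^4$-controlled refinement \cite{Sm10}: $\mathcal{S}$ is built from $\langle\nabla\rangle$-localized $U^2_\Delta/V^2_\Delta$ (or $X^{s,b}$-type) atoms together with Strichartz and bilinear-Strichartz components and controls $\ell^2 L^4_{t,x}$. Next partition $I = \bigcup_{j=1}^{J} I_j$ into $J = J(\varepsilon) < \infty$ consecutive subintervals with $\|\nabla\phi\|_{L^4_{t,x}(I_j)} \le \delta$ for a small absolute constant $\delta$; this is possible by additivity of $L^4$ over time and $\|\nabla\phi\|_{L^4(I)} \le \|\nabla\phi\|_{\ell^2 L^4(I)} \le \varepsilon$, and by monotonicity $\|\psi\|_{\ell^2 L^4(I_j)} \le \varepsilon$ on each piece. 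The key technical input is a multilinear estimate of the shape
\[
\|N(\psi)\|_{\mathcal{N}(I_j)} \lesssim \bigl( \|\psi\|_{\ell^2 L^4(I_j)} + \|\psi\|_{L^4_{t,x}(I_j)} \bigr)\, \|\psi\|_{\ell^2 L^4(I_j)}\, \|\psi\|_{\mathcal{S}(I_j)}
\]
and its $\mathcal{S}^2$-analogue in which at most one factor sits in $\mathcal{S}^2$ and the rest at critical regularity; the essential feature, inherited from \cite{BeIoKeTa11, Sm10}, is that every term of $N$, including the derivative-losing piece $A_\ell \partial_\ell \psi_m$, can be arranged so as to gain a small power of the controlling norm (i.e.\ of $\varepsilon$, or of $\delta$ after the extra partition). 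Combined with the linear estimate $\|\psi\|_{\mathcal{S}(I_j)} \lesssim \|\psi(a_j)\|_{L^2} + \|N(\psi)\|_{\mathcal{N}(I_j)}$, a continuity-in-the-endpoint bootstrap closes on each $I_j$ and yields $\|\psi\|_{\mathcal{S}(I_j)} \lesssim \sqrt{E(\phi)}$, using $\|\psi(a_j)\|_{L^2} = \sqrt{2E(\phi)}$.

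For the $\dot{H}^3$ bound I would differentiate the master equation twice: $\psi^{(2)} := \langle\nabla\rangle^2 \psi$ solves $(i\partial_t + \Delta)\psi^{(2)} = $ (terms carrying at most one factor of $\psi^{(2)}$, the others at critical regularity), so the same bootstrap on each $I_j$ gives $\|\psi^{(2)}\|_{\mathcal{S}^2(I_j)} \le C(E(\phi))\,\|\psi^{(2)}(a_j)\|_{L^2}$; chaining over $j$ and using $\|\psi^{(2)}(a_{j+1})\|_{L^2} \le \|\psi^{(2)}\|_{L^\infty_t L^2(I_j)} \le \|\psi^{(2)}\|_{\mathcal{S}^2(I_j)}$ gives a bound growing at most geometrically in $J$, hence finite. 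Undoing the caloric gauge (a bounded, boundedly invertible change of variables on these spaces below the ground state) then converts this into the desired finite bound for $\|\phi(t)\|_{\dot{H}^1 \cap \dot{H}^3}$ on $I$, in particular at $t_1$. I expect the principal obstacle to be exactly the multilinear estimates just invoked: proving that the derivative-losing, heat-nonlocal nonlinearity of the caloric-gauge system is controlled in $\mathcal{N}$ with a genuine gain of the $\ell^2 L^4$ controlling norm, uniformly over the parabolic extension and for energies all the way up to (but strictly below) the ground state, requires re-running the delicate trilinear and bilinear-Strichartz analysis of \cite{BeIoKeTa11, Sm10} on a finite interval with data of possibly large $L^2$ norm but small controlling norm. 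By comparison, the energy conservation, the partitioning, the caloric gauge construction below the ground state, and the Gronwall-type propagation of higher regularity should be routine.
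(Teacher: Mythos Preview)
Your high-level strategy matches the paper's: reduce to an a priori $\dot H^1 \cap \dot H^3$ bound, lift to the caloric gauge, control the nonlinearity in the dual of a critical solution space, and propagate higher regularity by running the same estimate at shifted regularity. The paper implements this via Theorem~\ref{thm:maintheorem-tech}, which is the a priori step, together with the return-to-map lemma and Theorem~\ref{thm:local}, exactly as you outline.

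The gap is in the multilinear estimate you invoke:
\[
\|N(\psi)\|_{\mathcal N(I_j)} \lesssim \bigl(\|\psi\|_{\ell^2 L^4(I_j)} + \|\psi\|_{L^4(I_j)}\bigr)\,\|\psi\|_{\ell^2 L^4(I_j)}\,\|\psi\|_{\mathcal S(I_j)}.
\]
This does not hold. Most of $\cN_m$ is indeed perturbative (Lemma~\ref{lem:Npert}, Lemmas~\ref{lem:HL}--\ref{lem:LH-1}), but for the derivative-losing piece $A_\ell\partial_\ell\psi_m$ in the regime where both heat-integrated factors $P_{k_1}\psi_x$, $P_{k_2}\psi_s$ sit at frequencies $\ll 2^k$, the best bound (Lemma~\ref{lem:Nnonpert}) is
\[
\upsilon_k \sum_{j\le k-C_1} b_j\,\upsilon_j,\qquad \upsilon_j=\alpha_j+\beta_j,
\]
and the initial-data envelope $\beta_j$ does not shrink under any time partition. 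A partition based on $\|\nabla\phi\|_{L^4(I_j)}\le\delta$ alone therefore cannot make this term small, so your bootstrap does not close as written.

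The paper closes the argument by one of two mechanisms, both resting on the solution-dependent bilinear estimate \eqref{bls} from \cite{Do12}, which gives a genuine $2^{-|j-k|/2}$ off-diagonal gain for $P_j\bar\psi_x\cdot P_k\psi_x$ (this is not a consequence of generic bilinear Strichartz and is the paper's main technical import). Argument~I partitions $I$ so that the bilinear $L^2$ quantity \eqref{smallintervals} is small on each piece; this is stronger than $L^4$-smallness and is what actually makes the low-low-high sum perturbative. Argument~II dispenses with any partition and instead feeds the non-perturbative bound into a discrete Gronwall iteration, $b_k^2 \lesssim (1+\sum_{j<k}b_j^2)\upsilon_k^2$, which sums because $\sum_k\upsilon_k^2<\infty$. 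Your sketch identifies neither the bilinear estimate \eqref{bls} nor the Gronwall structure, and without at least one of them the derivative-losing low-low-high interaction cannot be summed.
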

In fact, we obtain much more precise control on the solution $\phi$ than is indicated here;
see \S \ref{sec:mainresult} for our main technical result, which implies Theorem \ref{thm:maintheorem}.
Note that by time divisibility Theorem \ref{thm:maintheorem} also applies to solutions with large $\ell^2L^4$ norm.

\section{The caloric gauge}

Let $I \times \R^2 \ni (t, x) \mapsto \phi(t, x) \in M$ with $M \in \{\Sp^2, \H^2\}$ be a smooth
Schr\"odinger map.
A gauge choice may be represented by the map $e(t, x)$ in the diagram
\[
\begin{CD}
\R^2 \times \C @>e>> \phi^* TM @>>> TM \\
@AA\psi_\alpha A @AA\partial_\alpha \phi A @VV\pi V\\
I \times \R^2 @>id>> I \times \R^2 @>\phi >> M
\end{CD}
\]
Here $\psi_\alpha = e^* \partial_\alpha \phi$ denotes the vector $\partial_\alpha \phi$ written
with respect to the choice of orthonormal frame after canonically identifying $\R^2$ with $\C$.
The complex structure on $M$ pulls back to multiplication by $i$, and
the Levi-Civita connection pulls back to the covariant derivatives $D_\alpha := \partial_\alpha + i A_\alpha$, 
which generate curvatures $F_{\alpha \beta} := \partial_\alpha A_\beta - \partial_\beta A_\alpha$. 
Orthonormality of the frame ensures $A_\alpha \in \R$.
The zero-torsion property of the connection enforces the compatibility condition $D_\alpha \psi_\beta = D_\beta \psi_\alpha$. 
Using the fact that $M$ has constant curvature $\mu$, one may calculate directly that 
$F_{\alpha \beta} = \mu \Im(\bar{\psi}_\beta \psi_\alpha)$.
According to our conventions, $\mu = +1$ corresponds to the case of the sphere and $\mu = -1$
to that of the hyperbolic plane.
For any map $\phi$ and any choice of frame $e(t, x)$ it therefore holds that
\[
F_{\alpha \beta} = \mu \Im(\bar{\psi}_\beta \psi_\alpha) \quad \quad \text{and} \quad \quad
D_\alpha \psi_\beta = D_\beta \psi_\alpha
\]
These relations are preserved by the gauge transformations
\begin{equation}\label{gauge-freedom}
\phi \mapsto e^{-i \theta} \phi
\quad \quad
A \mapsto A + d \theta
\end{equation}
where $\theta(t, x)$ is a fast-decaying real-valued function.
This gauge invariance corresponds precisely to the freedom we that have in the choice of frame $e(t, x)$.

Here and throughout we use $\partial_0$ and $\partial_t$ interchangeably.
We also adopt the convention that Greek indices are allowed to assume values
from the set $\{-1, 0, 1, 2\}$, whereas Latin indices are restricted to $\{1, 2\}$, corresponding
only to spatial variables. Repeated Latin indices indicate an implicit sum over the spatial variables.
The case $\alpha = -1$ will be discussed below.

The lift of the Schr\"odinger map equation to the level of frames is
\begin{equation} \label{SM-freeze}
\psi_t = i D_j \psi_j
\end{equation}
To get an evolution equation for $\psi_j$, we covariantly differentiate with $D_k$
and use the compatibility condition $D_k \psi_t = D_t \psi_k$ to obtain
$D_t \psi_k = i D_k D_j \psi_j$. Next we commute $D_k$ and $D_j$ using the curvature
relation and we invoke the compatibility condition once more. This results in a covariant Schr\"odinger
evolution equation for $\psi_k$.
The whole gauge field system is
\begin{equation} \label{SM-gauge}
\begin{cases}
D_t \psi_k &= i D_j D_j \psi_k + F_{jk} \psi_j \\
F_{01} &= \mu \Re(\bar{\psi}_1 D_j \psi_j) \\
F_{02} &= \mu \Re(\bar{\psi}_2 D_j \psi_j) \\
F_{12} &= \mu \Im(\bar{\psi}_2 \psi_1) \\
D_1 \psi_2 &= D_2 \psi_1
\end{cases}
\end{equation}
Note that $\psi_t$ does not appear in this formulation, but can be recovered from the $\psi_j$
by using \eqref{SM-freeze}.
The system \eqref{SM-gauge} enjoys gauge freedom, which must be eliminated in order to
obtain a well-defined flow. The frame approach was first used to study Schr\"odinger map
wellposedness problems in \cite{ChShUh00}, though the system was formulated at the level
of frames at least as early as \cite{MaPaSo94}. In \cite{ChShUh00}, the gauge freedom
in \eqref{SM-gauge} is eliminated by choosing the Coulomb gauge condition.
We eliminate the gauge freedom by instead imposing the caloric gauge condition.
The caloric gauge was originally introduced in the setting of wave maps \cite{Tao04},
then subsequently applied to the Schr\"odinger map problem in \cite{BeIoKeTa11}.

The construction of the caloric gauge is most easily carried out at the level of maps, though in principle
one could carry it out entirely at the level of frames.
Let $\phi(t, x)$ be a map into $M$ defined on $I \times \R^2$
with energy $E_0 := E(\phi) = E(\phi(t))$.
We evolve, for each fixed $t_0 \in I$,
the map $\phi(t_0, x)$ under harmonic map heat flow, which is the gradient flow
associated to the energy \eqref{energy}:
\begin{equation} \label{HMHF}
\partial_s \phi = \Delta \phi + \mu |\partial_x \phi|_\mu^2 \phi
\end{equation}
We also use $\phi$ to denote the extension $\phi(s, t, x)$ along this flow.
Provided that the mass of $\phi(t_0)$ is finite and the energy of $\phi(t_0)$ is less than that of the ground state,
i.e., $E_0 < 4 \pi$ when the target is the sphere and
$E_0 < \infty$ when the target is the hyperboloid, the flow is well-defined and trivializes
as $s \to \infty$, sending all of $\R^2$ to a single point $Q \in M$.
To construct the caloric gauge, choose an orthonormal frame at $Q$,
pull it back at $s = \infty$, and finally pull it back along the heat flow using parallel transport.
This construction is unique modulo the one degree of freedom in the choice of frame at $Q$.
The validity of this construction up to the ground state and several related quantitative estimates
are established in \cite{Sm09}, extending the work initiated for $M = \H^2$ in \cite{Tao04, T4}.

At the level of gauges, \eqref{HMHF} assumes the form
\begin{equation} \label{HMHF-gauge}
\begin{cases}
D_s \psi_k &= D_j D_j \psi_k - i F_{jk} \psi_j \\
F_{-1,1} &= \mu \Im(\bar{\psi}_1 D_j \psi_j) \\
F_{-1,2} &= \mu \Im(\bar{\psi}_2 D_j \psi_j) \\
F_{12} &= \mu \Im(\bar{\psi}_2 \psi_1) \\
D_1 \psi_2 &= D_2 \psi_1
\end{cases}
\end{equation}
Here we use $-1$ to denote the $s$-time variable. We also may introduce
\begin{equation} \label{HF-freeze}
\psi_s = D_j \psi_j
\end{equation}
in analogy with $\psi_t$ given by \eqref{SM-freeze}.
Pulling back by parallel transport in the $s$ direction corresponds to taking
$A_s \equiv 0$. Then the $F_{-1, j}$ equations reduce to transport
equations for $A_\alpha$, and we may define $A_\alpha$ at finite times by integrating
back from infinity:
\begin{equation} \label{A}
A_\alpha(s) = - \int_s^\infty \Im(\bar{\psi}_\alpha D_j \psi_j)(s^\prime) ds^\prime
\end{equation}
Note that \eqref{A} is valid not only for $\alpha \in \{1, 2\}$, but also for $\alpha \in \{-1, 0\}$.

From \cite[Theorem 7.4]{Sm09}, we have several energy-type bounds for
the connection coefficients $A_x$:
\begin{equation} \label{A-energy}
\begin{split}
\sup_{s > 0} s^{\frac{k+1}{2}} \lVert \partial_x^k A_x(s) \rVert_{L^\infty_x} &\lesssim 1 
\\
\sup_{s > 0} s^{\frac{k}{2}} \lVert \partial_x^k A_x(s) \rVert_{L^2_x} &\lesssim 1
\\
\int_0^\infty s^{\frac{k-1}{2}} \lVert \partial_x^k A_x(s) \rVert_{L^\infty_x} ds &\lesssim 1
\\
\int_0^\infty s^{\frac{k-1}{2}} \lVert \partial_x^{k+1} A_x(s) \rVert_{L^2_x} ds &\lesssim 1
\end{split}
\end{equation}
Also, from \cite[Corollary 7.5]{Sm09}, we have energy-type bounds for
the gauge fields $\psi_x$:
\begin{equation} \label{psi-energy}
\begin{split}
\sup_{s > 0} s^{\frac{k}{2}} \lVert \partial_x^{k-1} \psi_x \rVert_{L^\infty_x} &\lesssim 1
\\
\sup_{s > 0} s^{\frac{k-1}{2}} \lVert \partial_x^{k-1} \psi_x \rVert_{L^2_x} &\lesssim 1
\\
\int_0^\infty s^{k-1} \lVert \partial_x^{k-1} \psi_x \rVert_{L^\infty_x}^2 ds &\lesssim 1
\\
\int_0^\infty s^{k-1} \lVert \partial_x^k \psi_x \rVert_{L^2_x}^2 ds &\lesssim 1
\end{split}
\end{equation}
In addition to \eqref{psi-energy}, we have analogous estimates
when one replaces $\partial_x \psi_x$ with $\psi_s$, $\partial_x^2$ with $\partial_s$,
and/or $\partial_x$ with $D_x$.
The constants in \eqref{A-energy} and \eqref{psi-energy} are allowed to depend upon
$k$ and upon the energy $E_0$.

We conclude by noting that the main evolution equations of 
\eqref{SM-gauge} and \eqref{HMHF-gauge} may respectively
be rewritten as
\begin{equation} \label{NLSH}
\begin{split}
(i \partial_t + \Delta) \psi_m &= \cN_m \\
(\partial_s - \Delta) \psi_\alpha &= U_\alpha
\end{split}
\end{equation}
where
\begin{equation} \label{NLSHN}
\begin{split}
\cN_m &:= - 2 i A_j \partial_j \psi_m - i (\partial_j A_j) \psi_m + (A_t + A_x^2) \psi_m - i \mu \psi_j \Im(\bar{\psi}_j \psi_m) \\
U_\alpha &:= 2i A_j \partial_j \psi_\alpha + i (\partial_j A_j) \psi_\alpha - A_x^2 \psi_\alpha + i \mu \psi_j \Im(\bar{\psi}_j \psi_\alpha)
\end{split}
\end{equation}
We assume that we are using the caloric gauge, which is why $A_{s} \equiv 0$ does not explicitly appear in the 
\eqref{NLSHN} expression for $U_\alpha$.

\section{Function spaces}

The main function spaces that we use were first introduced in their present form in \cite{BeIoKeTa11},
in which also is found a discussion of their development.
In particular, our $X_k$ and $Y_k$ spaces correspond to the $G_k$ and $N_k$ function spaces
of \cite{BeIoKeTa11}; we have no need for and therefore do not introduce the auxiliary function
space $F_k$ of \cite{BeIoKeTa11}.

\begin{lem}[Strichartz estimate]
Let $f \in L_x^2(\R^2)$ and $k \in \Z$.  Then the Strichartz estimate
\[
\lVert e^{i t \Delta} f \rVert_{L_{t,x}^4} \lesssim \lVert f \rVert_{L_x^2}
\]
holds, as does
the maximal function bound
\[
\lVert e^{i t \Delta} P_k f \rVert_{L_x^4 L_t^\infty} \lesssim
2^{\frac{k}{2}} \lVert f \rVert_{L_x^2}
\]
\end{lem}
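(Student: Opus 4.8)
The plan is to prove the space-time estimate first and then bootstrap the maximal function bound from it.

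For $\lVert e^{it\Delta}f\rVert_{L^4_{t,x}}\lesssim\lVert f\rVert_{L^2_x}$ I would use the standard $TT^{*}$ argument. The explicit kernel $e^{it\Delta}g(x)=(4\pi i t)^{-1}\int_{\R^2}e^{i|x-y|^2/(4t)}g(y)\,dy$ gives the dispersive estimate $\lVert e^{it\Delta}g\rVert_{L^\infty_x}\lesssim|t|^{-1}\lVert g\rVert_{L^1_x}$, which interpolated against the $L^2_x$ isometry yields $\lVert e^{it\Delta}g\rVert_{L^4_x}\lesssim|t|^{-1/2}\lVert g\rVert_{L^{4/3}_x}$. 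Writing $Tf:=e^{it\Delta}f$, the estimate is equivalent to $\lVert TT^{*}F\rVert_{L^4_{t,x}}\lesssim\lVert F\rVert_{L^{4/3}_{t,x}}$, where $TT^{*}F(t)=\int_{\R}e^{i(t-s)\Delta}F(s)\,ds$; applying the dispersive bound under the $s$-integral and then the one-dimensional Hardy--Littlewood--Sobolev inequality with kernel $|t|^{-1/2}$ (which maps $L^{4/3}_t\to L^{4}_t$ and lies safely in the non-endpoint range) closes it, with all manipulations justified first on Schwartz data and then by density.

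For the maximal function bound I would first reduce to $k=0$ by parabolic rescaling. If $g=P_k f$ and $G(x):=g(2^{-k}x)$, then $\widehat G$ is supported in a fixed annulus $|\xi|\sim 1$, independent of $k$, and a change of variables gives $e^{it\Delta}G(x)=(e^{i(2^{-2k}t)\Delta}g)(2^{-k}x)$; tracking the resulting powers of $2^k$ reduces the claim to $\lVert e^{it\Delta}h\rVert_{L^4_x L^\infty_t}\lesssim\lVert h\rVert_{L^2_x}$ for $h$ with $\widehat h$ supported in such a fixed annulus. For such $h$ and almost every fixed $x$, I would apply the one-dimensional Gagliardo--Nirenberg inequality $\lVert u\rVert_{L^\infty_t}\lesssim\lVert u\rVert_{L^4_t}^{3/4}\lVert\partial_t u\rVert_{L^4_t}^{1/4}$ to $u(t)=e^{it\Delta}h(x)$, noting $\partial_t u(t)=e^{it\Delta}(i\Delta h)(x)$; taking the $L^4_x$ norm and using H\"older in $x$ with exponents $16/3$ and $16$ gives
\[
\lVert e^{it\Delta}h\rVert_{L^4_x L^\infty_t}\lesssim\lVert e^{it\Delta}h\rVert_{L^4_{t,x}}^{3/4}\,\lVert e^{it\Delta}(\Delta h)\rVert_{L^4_{t,x}}^{1/4}.
\]
Since $\widehat h$, hence $\widehat{\Delta h}$, is supported where $|\xi|\sim 1$, the operator $\Delta$ is bounded here on $L^2_x$, so both right-hand space-time norms are $\lesssim\lVert h\rVert_{L^2_x}$ by the first part, which completes the proof.

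I do not expect an essential analytic obstacle: the only care needed is bookkeeping the $2^k$ factors in the rescaling step and checking that $t\mapsto e^{it\Delta}h(x)$ lies in $W^{1,4}_t$ for almost every $x$ — which is precisely the finiteness supplied by the first estimate — so that the Gagliardo--Nirenberg step is legitimate. The point worth flagging is rather a pitfall to avoid: one should not try to prove the $L^4_x L^\infty_t$ bound directly, since the $t$-supremum of the spatial convolution kernel of $e^{it\Delta}P_0$ just fails to lie in $L^2_x$, by a logarithm, so the naive convolution estimate is off by an endpoint; the right move is to trade $L^\infty_t$ for $L^4_t$ plus a time derivative, which is harmless after the frequency localization, and then invoke the already-established space-time Strichartz inequality.
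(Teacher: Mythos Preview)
Your argument is correct. The $TT^*$ proof of the $L^4_{t,x}$ Strichartz bound is standard and clean, and your maximal function argument---scaling to unit frequency, then trading $L^\infty_t$ for $L^4_t$ via the one-dimensional Gagliardo--Nirenberg inequality $\lVert u\rVert_{L^\infty_t}\lesssim\lVert u\rVert_{L^4_t}^{3/4}\lVert\partial_t u\rVert_{L^4_t}^{1/4}$ and closing with Strichartz---is a valid and careful way to carry this out. The H\"older exponents and the scaling bookkeeping all check.

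The paper itself does not give a proof: it simply cites Strichartz \cite{St77} for the first bound and says the second ``follows from scaling,'' leaving the base case implicit. Your write-up is thus considerably more explicit than what the paper provides. One slightly more direct alternative for the base case, in the same spirit as your argument, is to observe that for $\widehat h$ supported in a fixed annulus the function $t\mapsto e^{it\Delta}h(x)$ has temporal Fourier support in a fixed compact set (since $\tau=-|\xi|^2$ with $|\xi|\sim 1$), so Bernstein in $t$ gives $\lVert e^{it\Delta}h(x)\rVert_{L^\infty_t}\lesssim\lVert e^{it\Delta}h(x)\rVert_{L^4_t}$ directly, whence $\lVert e^{it\Delta}h\rVert_{L^4_xL^\infty_t}\lesssim\lVert e^{it\Delta}h\rVert_{L^4_{t,x}}\lesssim\lVert h\rVert_{L^2_x}$. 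This avoids the Gagliardo--Nirenberg and H\"older steps, but your route is equally legitimate and your warning about the logarithmic failure of the naive kernel bound is a useful remark.
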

The first bound is the original Strichartz estimate \cite{St77}
and the second follows from scaling.

For a unit length $\theta \in \Sp^1$, we denote by $H_\theta$ its orthogonal
complement in $\R^2$ with the induced Lebesgue measure.
Define the lateral spaces $L_\theta^{p,q}$ as those consisting of all 
measurable $f$ for which the norm
\[
\lVert h \rVert_{L_\theta^{p,q}} :=
\left[ \int_{\R} \left[
\int_{H_\theta \times \R} \lvert h(t, x_1 \theta + x_2)
\rvert^q dx_2 dt \right]^{\frac{p}{q}} dx_1 \right]^{\frac{1}{p}}
\]
is finite.
We make the usual modifications when $p = \infty$ or $q = \infty$.
For proofs of the following lateral Strichartz estimates, see \cite[\S3, \S7]{BeIoKeTa11}.
\begin{lem}[Lateral Strichartz estimates] \label{lem:Lateral}
Let $f \in L_x^2(\R^2)$, $k \in \Z$, and $\theta \in \mathbb{S}^1$.
Let $ 2 < p \leq \infty, 2 \leq q \leq \infty$ and $\frac1p + \frac1q = \frac12$. Then
\begin{align*}
\lVert e^{i t \Delta} P_{k, \theta} f \rVert_{L_\theta^{p, q}}
&\lesssim_{\phantom{p}}
2^{k(\frac2p - \frac12)} \lVert f \rVert_{L_x^2}, &p \geq q
\\
\lVert e ^{i t \Delta} P_k f \rVert_{L_\theta^{p,q}}
&\lesssim_p
2^{k(\frac2p - \frac12)} \lVert f \rVert_{L_x^2}, &p\leq q
\end{align*}
\end{lem}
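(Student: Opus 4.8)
The plan is to reduce both bounds to the standard Strichartz inequality on $\R^{1+1}$ by a dimensional-reduction argument, exploiting that the Schr\"odinger propagator on $\R^2$ restricted to a line decomposes along directions transverse to $\theta$. First I would fix $\theta = e_1$ without loss of generality (by rotational invariance of both $e^{it\Delta}$ and the Littlewood--Paley projection $P_k$, with $P_{k,\theta}$ the adapted projection localizing the frequency component in the $\theta$-direction to size $\sim 2^k$). Writing $x = x_1\theta + x_2$ with $x_2 \in H_\theta$, the free evolution factors as $e^{it\Delta} = e^{it\partial_{x_1}^2} e^{it\Delta_{x_2}}$, so that for each fixed $x_1$ the function $(t,x_2)\mapsto (e^{it\Delta}f)(t,x_1\theta+x_2)$ is itself a one-dimensional Schr\"odinger evolution in $x_2$ of the ``initial data'' obtained by a partial evolution and slice in $x_1$.

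For the case $p \le q$, I would apply the one-dimensional Strichartz estimate in the transverse variable $x_2 \in H_\theta \cong \R$: for the admissible pair with $\tfrac1p + \tfrac1q = \tfrac12$ and $p\le q$, one has $\lVert e^{it\Delta_{x_2}}g\rVert_{L^p_{x_1\text{-frozen}}... }$—more precisely, the mixed-norm estimate $\lVert e^{it\partial_{x_2}^2} g \rVert_{L^p_t L^q_{x_2}} \lesssim \lVert g\rVert_{L^2_{x_2}}$ holds for $2\le p\le q$, $q\le\infty$ with the Schr\"odinger-admissibility relation, by interpolating the conservation law ($p=q=\infty$... actually $p=\infty$, $q=2$) with the endpoint $L^6_{t,x_2}$ (the $1$d case). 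Integrating the resulting bound in $x_1$, using Minkowski's inequality to exchange the $L^p_{x_1}$ norm inside (valid since $p\le q$ means we can pull the $x_1$-integral outside an $\ell^q$-type norm only after Minkowski, so one must be careful about the order—here the outer norm is $L^p_{x_1}$ and $p$ is the \emph{smallest} exponent, so Minkowski goes the favorable way), and then applying Bernstein in the $x_1$-frequency variable to produce the gain $2^{k(2/p - 1/2)}$, yields the claim. The frequency-localization factor $P_k$ (full, not just $P_{k,\theta}$) is what supplies the needed $x_1$-frequency support for Bernstein.

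For the case $p \ge q$, the roles are reversed: now I would use the \emph{maximal-function/dispersive} side. One places the evolution in $x_1$ in a maximal-type norm $L^p_{x_1}L^\infty_t$ (fixed-time decay combined with a $TT^*$ or square-function argument in $1$d gives $\lVert e^{it\partial_{x_1}^2}h\rVert_{L^p_{x_1}L^\infty_t}\lesssim \lVert h\rVert_{L^2}$ type bounds after frequency localization, with the appropriate power of $2^k$), and uses the $L^2$ conservation or a lower-order Strichartz in $x_2$. This is precisely where the sharper projection $P_{k,\theta}$ is needed rather than $P_k$: the gain must come from localizing the $\theta$-directional frequency, and the bound $2^{k(2/p-1/2)}$ with $p\ge q$ is consistent with a half-derivative maximal estimate in that one direction. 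I would interpolate between the conservation law ($p=q=2$ is excluded since $p>2$, but $p$ near $2$) and an endpoint obtained from the $1$d maximal function bound $\lVert e^{it\partial^2}P_k g\rVert_{L^\infty_x L^2_t}\lesssim 2^{-k/2}\lVert g\rVert_{L^2}$ combined with Bernstein.

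The main obstacle I anticipate is \textbf{bookkeeping the interchange of mixed norms and the placement of the Littlewood--Paley factor}: one must verify that Minkowski's inequality is applied in the correct direction in each regime (which forces the split into $p\le q$ versus $p\ge q$ and explains why the first bound needs only $P_k$ while the second needs the directional $P_{k,\theta}$), and that the resulting power of $2^k$ after Bernstein in one variable exactly matches $2^{k(2/p-1/2)}$. The analytic input—$1$d Strichartz and $1$d maximal-function estimates—is classical (indeed the endpoint $1$d Strichartz $L^6_{t,x}$ and the $L^4_x L^\infty_t$ maximal bound are both standard), so no genuinely hard harmonic analysis is involved; the work is entirely in the mixed-norm interpolation and dimensional reduction. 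Since the paper cites \cite[\S3,\S7]{BeIoKeTa11} for the full details, I would present only this reduction and the two interpolation endpoints, leaving the routine verification of exponents to that reference.
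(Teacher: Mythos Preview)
The paper gives no proof here; it simply cites \cite[\S3,\S7]{BeIoKeTa11}, so there is nothing to compare against directly. Your overall strategy---reduce to endpoint estimates and interpolate, with the split $p\ge q$ versus $p\le q$ dictated by which endpoint is available---is the right one, and you correctly observe that $P_{k,\theta}$ is needed for one regime and only $P_k$ for the other. But the execution has real problems.

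For $p\ge q$, the endpoint you eventually write down, $\|e^{it\partial^2}P_kg\|_{L^\infty_xL^2_t}\lesssim 2^{-k/2}\|g\|_{L^2}$, is \emph{local smoothing}, not a maximal function bound. Interpolating it with the $L^{4,4}_\theta$ Strichartz estimate is exactly the correct argument and gives the first inequality, so this case is fine once the labels are fixed.

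For $p\le q$, the argument breaks down. You propose to freeze $x_1$ and view $(t,x_2)\mapsto e^{it\Delta}f(x_1,x_2)$ as a free one-dimensional Schr\"odinger evolution in $x_2$, then apply 1d Strichartz. But the factorization $e^{it\Delta}=e^{it\partial_{x_1}^2}e^{it\partial_{x_2}^2}$ shares the time variable: slicing at fixed $x_1$ does not produce a free evolution of $t$-independent data, so 1d Strichartz cannot be invoked as you describe. Separately, the 1d admissibility relation is $\tfrac2p+\tfrac1q=\tfrac12$, not $\tfrac1p+\tfrac1q=\tfrac12$, and the inner norm of $L^{p,q}_\theta$ is $L^q$ in both $t$ and $x_2$, not $L^p_tL^q_{x_2}$. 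The genuine obstruction in this regime is that the natural endpoint $L^{2,\infty}_\theta$ is the two-dimensional maximal function estimate, which (as the paper remarks just after Lemma~\ref{lem:LS}) fails logarithmically in $d=2$; this is precisely why the constant depends on $p$ and $p=2$ is excluded. The proof in the reference handles this by a different route than the dimensional reduction you sketch.
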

In the Schr\"odinger map setting, local smoothing spaces were first used in \cite{IoKe06}
and subsequently in \cite{IoKe07b, BeIoKe07, Be08a, BeIoKeTa11}.
\begin{lem}[Local smoothing \cite{IoKe06, IoKe07b}] \label{lem:LS}
Let $f \in L_x^2(\R^2)$, $k \in \Z$, and $\theta \in \Sp^1$.  Then
\begin{equation*}
\lVert e^{i t \Delta} P_{k, \theta} f \rVert_{L_{\theta}^{\infty, 2}}
\lesssim
2^{-\frac{k}{2}} \lVert f \rVert_{L_x^2}
\end{equation*}
For $f \in L_x^2(\R^d)$, 
the maximal function space bound
\begin{equation*}
\lVert e^{i t \Delta} P_k f \rVert_{L_\theta^{2, \infty}} \lesssim
2^{\frac{k(d-1)}{2}} \lVert f \rVert_{L_x^2}
\end{equation*}
holds in dimension $d \geq 3$.
\end{lem}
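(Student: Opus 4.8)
The plan is to establish the two estimates separately, in each case using the rotation invariance of the free Schr\"odinger evolution to reduce to $\theta = e_1$; we then write $x = (x_1, x')$ with $x_1 \in \R$ the coordinate along $\theta$ and $x' \in H_\theta \cong \R^{d-1}$ the transverse coordinates.

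For the local smoothing bound I would argue by a direct Fourier-side computation, essentially the classical one-dimensional Kato smoothing estimate applied fiberwise. Put $g := P_{k,\theta}f$, so that $\hat g$ is supported in $\{\lvert \xi_1 \rvert \sim 2^k\}$, and freeze $x_1$. Regard $(t, x') \mapsto (e^{it\Delta}g)(t, x_1, x')$ as a function of the $d$ variables $(t, x')$ and apply Plancherel in $x'$: for each fixed transverse frequency $\xi'$ one is left with a one-dimensional integral in $\xi_1$ with phase $-t(\xi_1^2 + \lvert \xi' \rvert^2)$. Splitting $\{\lvert \xi_1 \rvert \sim 2^k\}$ into its two half-lines, on each of which $\xi_1 \mapsto \xi_1^2$ is a diffeomorphism, and changing variables to $\sigma := \xi_1^2 + \lvert \xi' \rvert^2$ turns this into a genuine Fourier integral in $t$ with Jacobian $d\xi_1 = (2\lvert \xi_1 \rvert)^{-1}\,d\sigma \sim 2^{-k}\,d\sigma$. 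Plancherel in $t$ then produces exactly the factor $2^{-k}$; since $\lvert e^{i x_1 \xi_1}\rvert = 1$ the resulting bound is uniform in $x_1$, and taking square roots gives the claimed $2^{-k/2}$. No oscillatory-integral estimates are needed for this part.

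For the maximal function bound with $d \geq 3$ I would linearize the supremum and run a $TT^*$ argument. After approximating $f$ by Schwartz data, for each $x_1$ choose a point $(t(x_1), x'(x_1))$ nearly realizing the inner supremum and set $Sf(x_1) := (e^{i t(x_1)\Delta}P_k f)(x_1, x'(x_1))$, so that $S : L^2_x(\R^d) \to L^2_{x_1}(\R)$ and it suffices to bound $\lVert S \rVert$ by $2^{k(d-1)/2}$. The operator $SS^* : L^2_{x_1} \to L^2_{x_1}$ has kernel $K\big(t(x_1) - t(y_1),\, x_1 - y_1,\, x'(x_1) - x'(y_1)\big)$, where $K(t, \cdot)$ is the convolution kernel of $e^{it\Delta}P_k^2$, and by Schur's test the matter reduces to $\sup_{x_1}\int_\R \lvert K(\cdots) \rvert\, dy_1 \lesssim 2^{k(d-1)}$. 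Here one uses the dispersive bound $\lvert K(t, z) \rvert \lesssim \min\big(2^{kd},\, \lvert t \rvert^{-d/2}\big)$ together with finite speed of propagation at scale $2^k$, i.e. that $K(t, \cdot)$ is, up to rapidly decaying tails, supported in $\{\lvert z \rvert \lesssim 2^k \lvert t \rvert + 2^{-k}\}$; on that support $\lvert t \rvert \gtrsim 2^{-k}\lvert z_1 \rvert$ once $\lvert z_1 \rvert \gtrsim 2^{-k}$, so $\lvert K \rvert \lesssim 2^{kd/2}\lvert z_1 \rvert^{-d/2}$ there, and $\int_{\lvert z_1 \rvert > 2^{-k}} 2^{kd/2}\lvert z_1 \rvert^{-d/2}\, dz_1 \sim 2^{k(d-1)}$, the complementary region $\lvert z_1 \rvert \lesssim 2^{-k}$ contributing $\lesssim 2^{kd}\cdot 2^{-k} = 2^{k(d-1)}$ as well.

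The main obstacle is the maximal function bound: unlike the local smoothing estimate it is not a soft Plancherel computation but genuinely requires the stationary-phase/dispersive structure of the kernel of $e^{it\Delta}P_k$, and one must check that the linearized supremum is harmless and that the kernel bounds are uniform in the arbitrary measurable profiles $t(\cdot)$ and $x'(\cdot)$. It is also precisely here that the hypothesis $d \geq 3$ enters: the tail $\lvert z_1 \rvert^{-d/2}$ produced along the $\theta$-direction by combining dispersive decay with finite speed of propagation is integrable at infinity exactly when $d/2 > 1$. Of course both estimates already appear in \cite{IoKe06, IoKe07b}, from which we quote them; the sketch above merely indicates how one recovers them.
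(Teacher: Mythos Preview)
The paper does not prove this lemma at all: it is stated with citations to \cite{IoKe06, IoKe07b} and used as a black box, with the surrounding text only remarking that the maximal function bound fails in $d=2$ due to a logarithmic divergence. Your proposal, by contrast, supplies a correct self-contained sketch of both estimates---the Plancherel/change-of-variables argument for local smoothing and the $TT^*$/Schur argument for the maximal function---and you explicitly acknowledge that you are merely indicating how to recover results quoted from the literature. So there is no discrepancy to flag: you go beyond what the paper does, and your arguments are the standard ones found in the cited references. The one point worth noting is that your sketch also makes transparent \emph{why} $d \geq 3$ is needed (integrability of $\lvert z_1 \rvert^{-d/2}$ at infinity), which the paper only asserts without explanation.
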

In $d = 2$, the maximal function bound fails due to a logarithmic divergence.
This is overcome by exploiting Galilean invariance as in \cite{BeIoKeTa11}.
For $p, q \in [1, \infty]$, $\theta \in \Sp^1$, $\lambda \in \R$,
define $L_{\theta, \lambda}^{p,q}$ using the norm
\[
\lVert f \rVert_{L_{\theta, \lambda}^{p,q}} :=
\lVert T_{\lambda \theta}(f) \rVert_{L_\theta^{p,q}} =
\left[ \int_{\R} \left[
\int_{H_\theta \times \R}
\lvert f(t, (x_1 + t \lambda) \theta + x_2)\rvert^q dx_2 dt
\right]^{\frac{p}{q}} dx_1 \right]^{\frac{1}{p}}
\]
where $T_w$ denotes the Galilean transformation
\[
T_w(f)(t, x) := e^{-i x \frac w2} e^{-i t \frac{|w|^2}{4}} f(t, x + t w)
\]
With $W \subset \mathbb{R}$ finite we define the spaces $L_{\theta, W}^{p,q}$ by
\[
L_{\theta, W}^{p,q} := \sum_{\lambda \in W} L_{\theta, \lambda}^{p,q}, \quad \quad
\lVert f \rVert_{L_{\theta, W}^{p,q}} :=
\inf_{f = \sum_{\lambda \in W} f_\lambda} \sum_{\lambda \in W}
\lVert f_\lambda \rVert_{L_{\theta, \lambda}^{p,q}}
\]
For $k \in \Z$, $\cK \in \Z_{\geq 0}$, set
\[
W_k := \{ \lambda \in [-2^k, 2^k] : 2^{k + 2 \cK} \lambda \in \Z \}
\]
We work on a finite time interval
$[- 2^{2 \cK}, 2^{2 \cK}]$ in order to ensure that the $W_k$
are finite. This is still sufficient for global results provided all effective bounds
are uniform in $\cK$.
\begin{lem}[Local smoothing/maximal function estimates] \label{lem:LSMF}
Let $f \in L_x^2(\R^2)$, $k \in \Z$, and $\theta \in \Sp^1$.  Then
\[
\lVert e^{i t \Delta} P_{k, \theta} f \rVert_{L_{\theta, \lambda}^{\infty, 2}}
\lesssim 2^{-\frac{k}{2}} \lVert f \rVert_{L_{x}^2}, \quad \quad
\lvert \lambda \rvert \leq 2^{k - 40}
\]
and moreover, if $T \in (0, 2^{2 \cK}]$, then 
\[
\lVert 1_{[-T, T]}(t) e^{i t \Delta} P_k f
\rVert_{L_{\theta, W_{k + 40}}^{2, \infty}} \lesssim
2^{\frac{k}{2}} \lVert f \rVert_{L_{x}^2}
\]
\end{lem}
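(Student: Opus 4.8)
The plan is to deduce both estimates from the corresponding untruncated, unshifted bounds of Lemma~\ref{lem:LS} via the Galilean conjugation identity
\[
T_{\lambda\theta}\bigl(e^{it\Delta}g\bigr) = e^{it\Delta}\bigl(e^{-i x\cdot\lambda\theta/2}g\bigr),
\]
which says that a Galilean boost applied to a free Schr\"odinger solution produces another free solution whose data has merely been translated in frequency by $-\tfrac12\lambda\theta$. For the first (local smoothing) estimate this is immediate: since $\lvert\lambda\rvert\le 2^{k-40}$, the function $e^{-ix\cdot\lambda\theta/2}P_{k,\theta}f$ is still frequency-localized to $\{\lvert\xi\rvert\sim 2^k\}$ with angular support near $\theta$, hence equals $\widetilde P_{k,\theta}$ applied to an $L^2$-bounded modification of $f$, where $\widetilde P_{k,\theta}$ is a slightly fattened cap projection. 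Using the definition of $L^{\infty,2}_{\theta,\lambda}$, the identity above, and the local smoothing bound of Lemma~\ref{lem:LS} (whose proof applies verbatim to $\widetilde P_{k,\theta}$), one gets
\[
\lVert e^{it\Delta}P_{k,\theta}f\rVert_{L^{\infty,2}_{\theta,\lambda}} = \lVert e^{it\Delta}(e^{-ix\cdot\lambda\theta/2}P_{k,\theta}f)\rVert_{L^{\infty,2}_\theta} \lesssim 2^{-k/2}\lVert f\rVert_{L^2_x}.
\]

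For the second (maximal function) estimate I would split the annulus $\{\lvert\xi\rvert\sim 2^k\}$ according to the size of $\xi\cdot\theta$: a non-degenerate part $P_k^{\mathrm{nd}}$ supported where $\lvert\xi\cdot\theta\rvert\gtrsim 2^{k-C}$ for a fixed large $C$, and a degenerate part $P_k^{\mathrm{deg}}$ supported where $\lvert\xi\cdot\theta\rvert\lesssim 2^{k-C}$. For the non-degenerate part the wave packets travel transversally to the hyperplanes $H_\theta$ with $\theta$-speed $\sim 2^k$, and decomposing into the boundedly many dyadic pieces $\lvert\xi\cdot\theta\rvert\sim 2^j$, $k-C\le j\le k$, and running the analysis underlying the higher-dimensional maximal function bound of Lemma~\ref{lem:LS} on each of them yields
\[
\bigl\lVert 1_{[-T,T]}(t)\,e^{it\Delta}P_k^{\mathrm{nd}}f\bigr\rVert_{L^{2,\infty}_\theta} \lesssim 2^{k/2}\lVert f\rVert_{L^2_x},
\]
with no logarithmic divergence precisely because only $O(C)$ pieces occur; this is the content of the corresponding estimate in \cite[\S3, \S7]{BeIoKeTa11}. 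For the degenerate part I would pick $\lambda_\ast\in W_{k+40}$ with $\lambda_\ast\sim 2^k$ (when $k$ is so negative that $W_{k+40}$ is too coarse to contain such a point, observe that at frequency $2^k$ the evolution is essentially trivial on $[-2^{2\cK},2^{2\cK}]$ and conclude directly with $\lambda_\ast=0$). Boosting by $\lambda_\ast$ turns $1_{[-T,T]}e^{it\Delta}P_k^{\mathrm{deg}}f$ into $1_{[-T,T]}e^{it\Delta}$ applied to data frequency-localized to $\{\lvert\xi\rvert\sim 2^k\}\cap\{\lvert\xi\cdot\theta\rvert\sim 2^k\}$, i.e. to a non-degenerate region at the same dyadic scale, so the previous bound applies and, by the definition of $L^{2,\infty}_{\theta,\lambda_\ast}$, gives the same $2^{k/2}\lVert f\rVert_{L^2_x}$ control. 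Writing $1_{[-T,T]}e^{it\Delta}P_kf$ as the two-term sum of its non-degenerate piece (placed in $L^{2,\infty}_{\theta,0}$) and its degenerate piece (placed in $L^{2,\infty}_{\theta,\lambda_\ast}$) and invoking the definition of the $L^{2,\infty}_{\theta,W_{k+40}}$ norm then closes the estimate; finiteness of $W_{k+40}$ and the restriction $T\le 2^{2\cK}$ are exactly what keep these objects well-defined.

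The main obstacle is the core non-degenerate maximal function bound above, i.e. establishing $\lVert 1_{[-T,T]}e^{it\Delta}P_k^{\mathrm{nd}}f\rVert_{L^{2,\infty}_\theta}\lesssim 2^{k/2}\lVert f\rVert_{L^2_x}$ with no loss. In two dimensions this is genuinely borderline: the naive argument (freeze the variable $x_2\in H_\theta$ and apply a fixed-time maximal estimate) loses a logarithm, and one must instead exploit the transversality $\lvert\xi\cdot\theta\rvert\sim 2^k$ quantitatively, either through a $TT^\ast$ computation using the dispersive decay of $e^{it\Delta}$ along $\theta$ or by a direct wave-packet analysis on the truncated time interval; this is exactly the step for which the grids $W_k$ and the passage to a finite time interval were designed, and I would follow the treatment in \cite[\S3, \S7]{BeIoKeTa11} for it. Granted that estimate, the reduction described above --- Galilean conjugation together with the degenerate/non-degenerate splitting --- is routine.
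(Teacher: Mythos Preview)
Your approach matches the paper's. For the first bound you reduce via the Galilean conjugation identity to the untruncated local smoothing estimate of Lemma~\ref{lem:LS}, which is exactly what the paper means by ``follows from Lemma~\ref{lem:LS} via a Galilean boost.'' For the second bound the paper gives no argument at all and simply cites \cite[\S 7]{BeIoKeTa11}; you do the same, while additionally sketching the degenerate/non-degenerate splitting and the role of the boost by $\lambda_\ast\in W_{k+40}$. That sketch is a reasonable outline of the mechanism, and you correctly flag the borderline non-degenerate maximal bound as the genuine content requiring the full treatment in \cite{BeIoKeTa11}.
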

\begin{proof}
The first bound follows from Lemma \ref{lem:LS} via a Galilean boost.
The second is more involved and proven in \cite[\S 7]{BeIoKeTa11}.
\end{proof}

Let $I \subset \R$ be a time interval.  For $k \in \Z$, 
let $\Xi_k = \{ \xi \in \R^2 : \lvert \xi \rvert \in [2^{k-1}, 2^{k+1}] \}$.  
Let
\[
L_k^2(I) := \{ f \in L^2(I \times \R^2) : \supp\; \hat{f}(t, \xi) \subset I \times \Xi_k \}
\]
For $f \in L^2 (I \times \R^2)$, let
\[
\lVert f \rVert_{X_k^0(I)} :=
\lVert f \rVert_{L_t^\infty L_x^2} +
\lVert f \rVert_{L_{t,x}^4}
+ 2^{-\frac{k}{2}} \lVert f \rVert_{L_x^4 L_t^\infty}
+ 2^{-\frac{k}{2}} \sup_{\theta \in \Sp^1}
\lVert f \rVert_{L_{\theta, W_{k + 40}}^{2, \infty}}
\]
Define $X_k(I)$, $Y_k(I)$ as the normed spaces of functions in $L_k^2(I)$ for which the corresponding norms are finite:
\[
\begin{split}
\lVert f \rVert_{X_k(I)} :=& \;\lVert f \rVert_{X_k^0(I)}
+ 2 ^{-\frac{k}{6}} \sup_{\theta \in \Sp^1} \lVert f \rVert_{\Lts}
+ 2^{\frac{k}{6}} \sup_{\lvert j - k \rvert \leq 20} \sup_{\theta \in \mathbb{S}^1}
\lVert P_{j, \theta} f \rVert_{\Lst} \\
&+ 2^{\frac{k}{2}} \sup_{\lvert j - k \rvert \leq 20} \sup_{\theta \in \mathbb{S}^1}
\sup_{\lvert \lambda \rvert < 2^{k - 40}}
\lVert P_{j, \theta} f \rVert_{L_{\theta, \lambda}^{\infty, 2}} 
\\
\lVert f \rVert_{Y_k(I)} :=& \inf_{f = f_1 + f_2 + f_3 + f_4}
\lVert f_1 \rVert_{L_{t,x}^{\frac43}}
+ 2^{\frac{k}{6}} \lVert f_2 \rVert_{L_{\hat{\theta}_1}^{\frac32, \frac65}} \\
& + 2^{\frac{k}{6}} \lVert f_3 \rVert_{L_{\hat{\theta}_2}^{\frac32, \frac65}} 
+ 2^{-\frac{k}{2}} \sup_{\theta \in \mathbb{S}^1} 
\lVert f_4 \rVert_{L_{\theta, W_{k - 40}}^{1,2}}
\end{split}
\]
where $(\hat{\theta}_1, \hat{\theta}_2)$ denotes the canonical basis in $\mathbb{R}^2$.

These spaces are related via the following linear estimate, which is proved in \cite{BeIoKeTa11}.
\begin{prop}[Main linear estimate] \label{prop:MainLinearEstimate}
Assume $\cK \in \Z_{\geq 0}$, $I = [t_0, t_1] \subset (0, 2^{2 \cK}]$ and $k \in \Z$.  Then
for each $u_0 \in L^2$ that is frequency-localized to $\Xi_k$ and for any $h \in Y_k(I)$, the solution $u$ of
\begin{equation} \label{mainlinearestimate}
(i \partial_t + \Delta_x) u = h, \quad\quad u(t_0) = u_0
\end{equation}
satisfies
\begin{equation*}
\lVert u \rVert_{X_k(I)} \lesssim \lVert u(t_0) \rVert_{L_x^2} + \lVert h \rVert_{Y_k(I)}
\end{equation*}
\end{prop}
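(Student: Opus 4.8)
The plan is to reduce to the case $k = 0$ by the parabolic scaling $u(t,x) \mapsto u(2^{2k} t, 2^{k} x)$, which sends \eqref{mainlinearestimate} to an equation of the same form with data frequency-localized to $\Xi_0$ and, up to the various powers of $2^k$ built into the definitions, transports the $X_k$ and $Y_k$ norms to the $X_0$ and $Y_0$ norms. One must only check that the rescaled time interval still lies in a window $(0, 2^{2 \cK'}]$ to which Lemma~\ref{lem:LSMF} applies and that the Galilean sets $W_{k \pm 40}$ rescale consistently; since $\lambda$ carries the units of a frequency, the condition $2^{k + 2\cK} \lambda \in \Z$ is preserved after relabeling $\cK$, and this is harmless because all constants are required to be uniform in $\cK$. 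With $k = 0$ fixed, Duhamel's formula $u(t) = e^{i(t - t_0)\Delta} u_0 - i \int_{t_0}^t e^{i(t-s)\Delta} h(s)\, ds$ reduces the claim to two estimates: the homogeneous bound $\lVert e^{it\Delta} u_0 \rVert_{X_0(I)} \lesssim \lVert u_0 \rVert_{L^2_x}$, and the inhomogeneous bound $\lVert \int_{t_0}^t e^{i(t-s)\Delta} h(s)\, ds \rVert_{X_0(I)} \lesssim \lVert h \rVert_{Y_0(I)}$.

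For the homogeneous bound I would go through the seven constituent norms of $\lVert \cdot \rVert_{X_0}$ one at a time. The $L^\infty_t L^2_x$ piece is conservation of mass; the $L^4_{t,x}$ piece is the Strichartz estimate; the $2^{-k/2} L^4_x L^\infty_t$ piece is the maximal-function bound of the same lemma; the two lateral pieces come from Lemma~\ref{lem:Lateral} applied with $(p,q) = (3,6)$ (using the $p \le q$ line) and $(p,q) = (6,3)$ (using the $p \ge q$ line), noting $\tfrac13 + \tfrac16 = \tfrac12$; the $2^{-k/2} \sup_{|\lambda| < 2^{k-40}} L^{\infty,2}_{\theta,\lambda}$ piece is the local-smoothing bound in the first display of Lemma~\ref{lem:LSMF}; and the $2^{-k/2} L^{2,\infty}_{\theta, W_{k+40}}$ piece is the Galilean-averaged maximal-function bound in the second display of Lemma~\ref{lem:LSMF}, which is precisely where the hypothesis $I \subset (0, 2^{2\cK}]$ and the specific averaging set $W_{k+40}$ are used. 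Since $u_0$ is supported in $\Xi_0$, the frequency projections $P_{k,\theta}$ and $P_{j,\theta}$ (with $|j-k| \le 20$) appearing inside the definition of $X_k$ may be inserted for free.

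For the inhomogeneous bound, the key point is that $Y_k$ is designed as a sum of spaces each of which is, under the space-time pairing, (pre)dual to one of the ingredients used above, so the retarded estimates follow by the $TT^*$ method together with the Christ--Kiselev lemma. Writing $h = f_1 + f_2 + f_3 + f_4$ nearly attaining the infimum defining $\lVert h \rVert_{Y_0}$, I would bound the Duhamel integral of each summand separately. For $f_1 \in L^{4/3}_{t,x}$, the dual Strichartz inequality combined with Christ--Kiselev controls the $L^4_{t,x}$, $L^4_x L^\infty_t$, lateral, and local-smoothing components (the left-hand time exponents all strictly exceeding $4/3$), while the borderline $L^\infty_t L^2_x$ component, for which Christ--Kiselev is unavailable, is handled directly by Minkowski's inequality and mass conservation applied to $\int_{t_0}^t e^{i(t-s)\Delta} f_1(s)\, ds$. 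For $f_2, f_3 \in L^{3/2,6/5}_{\hat\theta_1}, L^{3/2,6/5}_{\hat\theta_2}$, one dualizes the lateral Strichartz estimates of Lemma~\ref{lem:Lateral} (the pair $(3/2, 6/5)$ being dual to $(3,6)$); for $f_4 \in L^{1,2}_{\theta, W_{k-40}}$, one dualizes the local-smoothing estimate of Lemmas~\ref{lem:LS} and \ref{lem:LSMF}; in each case Christ--Kiselev promotes the non-retarded estimate to the retarded one, and the index shifts are arranged so that the dual-side sets $W_{k-40}$ and primal-side sets $W_{k+40}$ are compatible.

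The main obstacle is this last family of estimates --- those involving the Galilean-boosted lateral spaces $L^{p,q}_{\theta,\lambda}$ and the finite averaging sets $W_k$. In two dimensions the naive maximal-function bound diverges logarithmically, so every relevant estimate must be routed through the Galilean-invariance device of \cite{BeIoKeTa11}, and the delicate bookkeeping is keeping the frequency localizations $P_{j,\theta}$, the admissible ranges of $\lambda$, and the $\pm 40$ shifts in $W_{k\pm 40}$ mutually consistent across the homogeneous estimate, its dual, and the Christ--Kiselev step, all while retaining uniformity in $\cK$; once these dualities are set up, the remaining Strichartz and lateral-Strichartz contributions are routine. Since this proposition is exactly the linear estimate established in \cite{BeIoKeTa11}, in practice I would simply invoke that reference, but the argument above is how it runs.
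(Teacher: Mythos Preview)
Your proposal is correct and aligns with the paper's treatment: the paper does not prove this proposition at all but simply states that it ``is proved in \cite{BeIoKeTa11}'' and cites that reference, exactly as you conclude you would do. Your sketch of the scaling reduction, homogeneous bounds via Lemmas~\ref{lem:Lateral}--\ref{lem:LSMF}, and inhomogeneous bounds via duality and Christ--Kiselev accurately summarizes the argument carried out in that reference, so there is nothing to add.
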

We conclude this section by recording some bilinear estimates.
\begin{lem}
For $k, j \in \Z$, $h \in L^2_{t,x}$, $f \in X_j(I)$, we have the following
inequalities under the given restrictions on $k, j$.
\begin{equation}\label{bilinear}
\lVert P_k(h f) \rVert_{Y_k(I)}
\lesssim
\begin{cases}
\phantom{2^{- \frac{|j-k|}{2}}} \lVert h \rVert_{L^2_{t,x}} \lVert f \rVert_{X_j(I)} & |j - k| \leq 80 \\
2^{- \frac{|j-k|}{2}} \lVert h \rVert_{L^2_{t,x}} \lVert f \rVert_{X_j(I)} & j \leq k - 80 \\
2^{- \frac{|j-k|}{6}} \lVert h \rVert_{L^2_{t,x}} \lVert f \rVert_{X_j(I)} & k \leq j - 80
\end{cases}
\end{equation}
\end{lem}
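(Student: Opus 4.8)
The plan is to estimate $\lVert P_k(hf)\rVert_{Y_k(I)}$ by exhibiting, in each frequency regime, a decomposition of $P_k(hf)$ into the four pieces allowed in the definition of the $Y_k$ norm, and then applying H\"older's inequality to pair $h\in L^2_{t,x}$ against exactly one component of the $X_j$ norm of $f$: against $\lVert f\rVert_{L^4_{t,x}}$ in the balanced regime, against the tilted local-smoothing component in the high-low regime, and against the lateral $\lVert f\rVert_{L^{6,3}_\theta}$ component in the low-high regime. Throughout one uses that $L^{2,2}_\theta = L^2_{t,x}$ for each $\theta\in\Sp$, that the Galilean transformations satisfy $\lvert T_w(uv)\rvert = \lvert T_wu\rvert\,\lvert T_wv\rvert$ pointwise and act isometrically on $L^2_{t,x}$, and the boundedness of $P_k$ and $P_{j,\theta}$ on the (tilted) lateral spaces; all of this is available from \cite{BeIoKeTa11}. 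That $\widehat{P_k(hf)}$ is supported in $\Xi_k$ is clear, and the integrability needed to put $P_k(hf)$ in $L^2_k(I)$ follows from Bernstein and $f\in L^\infty_tL^2_x(I)$.

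When $\lvert j-k\rvert\leq 80$, I would place all of $P_k(hf)$ in the $L^{4/3}_{t,x}$ slot: H\"older gives $\lVert hf\rVert_{L^{4/3}_{t,x}}\leq\lVert h\rVert_{L^2_{t,x}}\lVert f\rVert_{L^4_{t,x}}\leq\lVert h\rVert_{L^2_{t,x}}\lVert f\rVert_{X_j(I)}$, and $P_k$ is bounded on $L^{4/3}_{t,x}$, which closes this case with no gain. When $j\leq k-80$, I would place all of $P_k(hf)$ in the fourth slot, governed by $2^{-k/2}\sup_\theta\lVert\cdot\rVert_{L^{1,2}_{\theta,W_{k-40}}}$. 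Fix $\theta$ and write $f=\sum_{\lambda\in W_{j+40}}f_\lambda$ essentially realizing $\lVert f\rVert_{L^{2,\infty}_{\theta,W_{j+40}}}$, which by the fourth term of $\lVert\cdot\rVert_{X_j^0}$ is $\lesssim 2^{j/2}\lVert f\rVert_{X_j(I)}$. For each $\lambda$, using $\lVert\cdot\rVert_{L^{p,q}_{\theta,\lambda}}=\lVert T_{\lambda\theta}(\cdot)\rVert_{L^{p,q}_\theta}$, the identity $\lvert T_{\lambda\theta}(hf_\lambda)\rvert=\lvert T_{\lambda\theta}h\rvert\,\lvert T_{\lambda\theta}f_\lambda\rvert$, lateral H\"older in $L^{1,2}_\theta$ with exponents $(2,2)$ and $(2,\infty)$, and $\lVert T_{\lambda\theta}h\rVert_{L^2_{t,x}}=\lVert h\rVert_{L^2_{t,x}}$, one gets $\lVert hf_\lambda\rVert_{L^{1,2}_{\theta,\lambda}}\leq\lVert h\rVert_{L^2_{t,x}}\lVert f_\lambda\rVert_{L^{2,\infty}_{\theta,\lambda}}$. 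Summing over $\lambda$ and using $W_{j+40}\subset W_{k-40}$ --- valid precisely because $j+40\leq k-40$ --- gives $\lVert hf\rVert_{L^{1,2}_{\theta,W_{k-40}}}\lesssim 2^{j/2}\lVert h\rVert_{L^2_{t,x}}\lVert f\rVert_{X_j(I)}$; applying $P_k$ (harmless on this space) and the prefactor $2^{-k/2}$ produces the asserted gain $2^{-(k-j)/2}=2^{-\lvert j-k\rvert/2}$.

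When $k\leq j-80$, I would use the lateral $L^{6,3}_\theta$ component of the $X_j$ norm against the slots $2^{k/6}\lVert\cdot\rVert_{L^{3/2,6/5}_{\hat\theta_1}}$ and $2^{k/6}\lVert\cdot\rVert_{L^{3/2,6/5}_{\hat\theta_2}}$ of the $Y_k$ norm. Using that $\max(\lvert\xi_1\rvert,\lvert\xi_2\rvert)\gtrsim 2^j$ on $\Xi_j$, write $f=f'+f''$ with $f'=P_{j,\hat\theta_1}f'$ and $f''=P_{j,\hat\theta_2}f''$ (up to finitely many terms treated the same way), and place $P_k(hf')$ in the second slot and $P_k(hf'')$ in the third. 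For $\ell\in\{1,2\}$, lateral H\"older in $L^{3/2,6/5}_{\hat\theta_\ell}$ with exponents $(2,2)$ and $(6,3)$ gives $\lVert h\,P_{j,\hat\theta_\ell}f\rVert_{L^{3/2,6/5}_{\hat\theta_\ell}}\leq\lVert h\rVert_{L^2_{t,x}}\lVert P_{j,\hat\theta_\ell}f\rVert_{L^{6,3}_{\hat\theta_\ell}}$, and the last factor is $\lesssim 2^{-j/6}\lVert f\rVert_{X_j(I)}$ by the term $2^{j/6}\sup_{\lvert i-j\rvert\leq 20}\sup_\theta\lVert P_{i,\theta}f\rVert_{\Lst}$ of $\lVert\cdot\rVert_{X_j}$ (take $i=j$, $\theta=\hat\theta_\ell$). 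Boundedness of $P_k$ on the lateral space and the prefactor $2^{k/6}$ give the gain $2^{(k-j)/6}=2^{-\lvert j-k\rvert/6}$.

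The main obstacle I anticipate is the low-high regime $k\leq j-80$: one must reconcile the supremum over all $\theta\in\Sp$ defining $\lVert f\rVert_{X_j(I)}$ with the fact that $Y_k$ offers lateral norms only along the two coordinate directions, and verify that after the directional splitting of $\Xi_j$ the pieces $P_{j,\hat\theta_\ell}f$ genuinely recover $f$ and remain compatible with the $L^{6,3}_{\hat\theta_\ell}$ bound, which --- unlike the $\Lts$ bound --- is available only for directionally projected functions. A secondary recurring point is that the $80$-separations are exactly what make the inclusion $W_{j+40}\subset W_{k-40}$ and the parallel frequency-support reductions hold, so these thresholds cannot be loosened without further work.
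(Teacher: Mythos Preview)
Your argument is correct and is precisely the approach of \cite[Lemma~6.3]{BeIoKeTa11}, which is all the paper invokes here: in each frequency regime one places $P_k(hf)$ in a single slot of the $Y_k$ norm and pairs $h\in L^2_{t,x}$ via H\"older against the matching component of $\lVert f\rVert_{X_j}$ ($L^4_{t,x}$ when $|j-k|\le 80$, the tilted maximal-function component $L^{2,\infty}_{\theta,W_{j+40}}$ when $j\le k-80$, and the directionally projected $L^{6,3}_{\hat\theta_\ell}$ component when $k\le j-80$). Your handling of the only genuinely delicate points---the inclusion $W_{j+40}\subset W_{k-40}$ and the decomposition of $\Xi_j$ into pieces where $|\xi\cdot\hat\theta_\ell|\sim 2^j$ so that the $\sup_\theta$ in $X_j$ can be matched to the two fixed directions in $Y_k$---is exactly what is done in \cite{BeIoKeTa11}.
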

\begin{proof}
See \cite[Lemma 6.3]{BeIoKeTa11}.
\end{proof}

\section{The main result} \label{sec:mainresult}

In this section we state and outline the proof of our main technical result.

It is shown in \cite[Lemma 4.3]{Sm10} that
\[
\lVert \nabla \phi \rVert_{\ell^2 L^4_{t,x}(I \times \R^2)}
\sim
\lVert \psi_x \rVert_{\ell^2 L^4_{t,x}(I \times \R^2)}
\]
and so the small $\ell^2 L^4$ assumption of Theorem \ref{thm:maintheorem}
directly lifts to the gauge formulation: we take $0 < \varepsilon \ll 1$ such that
\begin{equation} \label{ScatteringSize}
\lVert \psi_x \rVert_{\ell^2 L^4_{t,x}(I \times \R^2)} \leq \varepsilon
\end{equation}

Pick $0 < \delta \ll 1$. A positive sequence $\{a_k\}_{k \in \Z}$ is said to be a 
frequency envelope provided that it belongs to $\ell^2$ and is slowly varying
in the sense that $a_k \leq a_j 2^{\delta |k - j|}$ for all $j, k \in \Z$.
Frequency envelopes satisfy the summation rules
\begin{align*}
\sum_{k^\prime \leq k} 2^{p k^\prime} a_{k^\prime}
&\lesssim (p - \delta)^{-1} 2^{p k} a_k
&p > \delta \\
\sum_{k^\prime \geq k} 2^{-p k^\prime} a_{k^\prime}
&\lesssim (p - \delta)^{-1} 2^{-p k} a_k
&p > \delta
\end{align*}
We absorb the $(p - \delta)^{-1}$ factor into the constant in applications since it only ever appears
$O(1)$ many times.

For $\sigma \in \Z_{\geq 0}$, and $I = [t_0, t_1]$, define the frequency envelopes
$b_k(\sigma), \alpha_k(\sigma)$, and $\beta_k(\sigma)$
via
\[
\begin{split}
b_k(\sigma) &= \sup_{j \in \Z} 2^{\sigma j} 2^{- \delta |k - j|}
\lVert P_{j} \psi_x \rVert_{X_j(I)} \\
\alpha_k(\sigma) &= \sup_{j \in \Z} 2^{\sigma j} 2^{- \delta |k - j|} \lVert P_j \psi_x \rVert_{L^4_{t,x}} \\
\beta_k(\sigma) &= \sup_{j \in \Z} 2^{\sigma j} 2^{- \delta |k - j|} \lVert P_j \psi_x(t_0) \rVert_{L^2_x}
\end{split}
\]
Set $b_k = b_k(0)$, $\alpha_k = \alpha_k(0)$, and $\beta_k = \beta_k(0)$ for short.
These envelopes satisfy
\[
\sum_k b_k^2 \sim \sum_k \lVert P_k \psi_x \rVert_{X_k(I)}^2
\]
and
\[
\sum_{k \in \Z} \alpha_k^2 \sim \lVert \psi_x \rVert_{\ell^2 L^4}^2,
\quad \quad
\sum_{k \in \Z} \beta_k^2 \sim \lVert P_j \psi_x(t_0) \rVert_{\ell^2 L^2_x}^2 \sim E_0^2
\]
For convenience, set 
\[
\upsilon_k(\sigma) := \alpha_k(\sigma) + \beta_k(\sigma)
\]

\begin{thm}[Main technical result] \label{thm:maintheorem-tech}
Let $I = [t_0, t_1] \subset \R$ with $t_0 < t_1$.
Let $\dot{H}^1 \cap \dot{H}^3 \ni \phi_0 = \phi(t_0) : \R^2 \to M$, $M \in \{\Sp^2, \H^2\}$, and
let $\phi$ be a solution of \eqref{SM} on $I$ with finite mass, with energy $E(\phi) < \Ec$, and
with caloric gauge representation $(\psi_\alpha, A_\alpha)$.
Let $0 < \delta, \varepsilon \ll 1$, $\sigma_1 \in \Z_{> 0}$, 
and let frequency envelopes $b_k(\sigma), \upsilon_k(\sigma)$ be defined as above. 
If \eqref{ScatteringSize} holds, then
\[
b_k(\sigma) \lesssim_{\varepsilon} \upsilon_k(\sigma)
\]
for $\sigma \in \{0, 1, \ldots, \sigma_1\}$.
\end{thm}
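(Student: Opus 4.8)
The plan is to close a bootstrap estimate for the frequency-envelope quantity $b_k(\sigma)$ using the main linear estimate, Proposition \ref{prop:MainLinearEstimate}, applied to the Littlewood-Paley pieces $P_k \psi_m$ of the differentiated field, which solve $(i\partial_t + \Delta) P_k \psi_m = P_k \cN_m$. The output will be a bound of the form $\lVert P_k \psi_m \rVert_{X_k(I)} \lesssim \lVert P_k \psi_m(t_0)\rVert_{L^2_x} + \lVert P_k \cN_m \rVert_{Y_k(I)}$, and the entire game is to estimate the nonlinearity $P_k \cN_m$ in $Y_k(I)$ by the bilinear and multilinear estimates, peeling off the various terms in \eqref{NLSHN}: the magnetic terms $A_j \partial_j \psi_m$ and $(\partial_j A_j)\psi_m$, the potential term $(A_t + A_x^2)\psi_m$, and the cubic term $\psi_j \Im(\bar\psi_j \psi_m)$. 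For each term I would decompose into frequency interactions (high-high, high-low, low-high), place one factor in $L^2_{t,x}$ so that the bilinear estimate \eqref{bilinear} applies, estimate $\psi_x$ factors in $X_j(I)$ (controlled by $b_j$), and control the remaining factors --- in particular the connection coefficients $A_\alpha$ --- using the caloric-gauge energy bounds \eqref{A-energy} and \eqref{psi-energy} together with the $\ell^2 L^4$ smallness \eqref{ScatteringSize}.

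The key structural point is that $A_x$ (and $A_t$) are themselves quadratic in $\psi_x$ via \eqref{A}, so every term in $\cN_m$ is effectively at least cubic in $\psi_x$. This is what makes the scheme close: each term carries at least two $\psi_x$ factors beyond the one being estimated, and at least one of those extra factors can be made small, either directly by \eqref{ScatteringSize} (an $L^4_{t,x}$ factor, i.e. an $\alpha$) or by absorbing it into the energy bound (a $\beta$, i.e. the $E_0$-dependent constant), leaving room to absorb the main $X_k$-norm back to the left-hand side when $\varepsilon$ is small. Concretely, I would organize the estimate as $b_k(\sigma) \lesssim \upsilon_k(\sigma) + \varepsilon\, b_k(\sigma) + (\text{lower-order tails controlled by } \upsilon \text{ and } b)$, where the $\varepsilon b_k$ term comes from the cubic interaction in which one $\psi_x$ is measured in $L^4_{t,x}$; then for $\varepsilon \ll 1$ the bootstrap closes and gives $b_k(\sigma) \lesssim_\varepsilon \upsilon_k(\sigma)$. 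The induction on $\sigma$ from $0$ up to $\sigma_1$ is needed because the higher-regularity envelope $b_k(\sigma)$ for $\sigma \geq 1$ is bounded in terms of both $\upsilon_k(\sigma)$ and the already-controlled lower envelopes $b_k(\sigma')$, $\sigma' < \sigma$; the slowly-varying property of frequency envelopes and the summation rules are what let the frequency-sum tails reassemble into a single $\upsilon_k(\sigma)$ on the right.

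The main obstacle, and the place demanding the most care, is the magnetic nonlinearity $A_j \partial_j \psi_m$ in the regime where $A_j$ is at high frequency relative to $\psi_m$, and more generally any interaction where the derivative lands on a high-frequency factor: a naive bilinear estimate loses a full derivative and does not close. This is exactly the difficulty the function spaces $X_k, Y_k$ --- with their lateral Strichartz, local smoothing, and maximal-function components (Lemmas \ref{lem:Lateral}, \ref{lem:LS}, \ref{lem:LSMF}) --- were designed to handle: one must trade the derivative for a gain via local smoothing on the high-frequency factor paired against a maximal-function norm on the low-frequency factor, which is why the definitions of $X_k$ and $Y_k$ include the $L^{p,q}_\theta$ and $L^{p,q}_{\theta,\lambda}$ pieces. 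I expect the bulk of the work to be a careful case analysis verifying that in every frequency configuration some admissible pairing of the space-time norms in $X_k$ against the dual norms in $Y_k$ produces both the required derivative gain and a decaying factor $2^{-c|k-j|}$ compatible with the frequency-envelope summation. Once these estimates are assembled uniformly in $\cK$, the theorem follows.
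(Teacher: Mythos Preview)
Your overall architecture --- apply Proposition \ref{prop:MainLinearEstimate} to $P_k\psi_m$, bound $P_k\cN_m$ in $Y_k(I)$, close a bootstrap --- is correct and matches the paper. But there is a real gap in how you propose to close, and your identification of the hard case is backwards.

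The difficult interaction is not $A_j$ at high frequency relative to $\psi_m$; that case (the second sum in \eqref{Adpsi-trich}) is easy, since the derivative on $\psi_m$ costs only a low frequency and can be transferred to $A$ and placed in $L^2_{t,x}$ via Lemma \ref{lem:L2bounds} (see Lemma \ref{lem:HL}). The genuinely problematic term is the opposite one: $P_{k_1}A_\ell\,\partial_\ell P_{k}\psi_m$ with $k_1\ll k$. Here the derivative costs $2^k$, and the local-smoothing/maximal-function pairing you invoke --- which is exactly what the bilinear estimate \eqref{bilinear} encodes --- gains only $2^{(k_1-k)/2}$, precisely offset by the derivative. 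The caloric energy bounds \eqref{A-energy} give no smallness in $\varepsilon$, so you cannot produce the $\varepsilon\, b_k$ you claim from this term.

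The paper's resolution uses two ingredients you do not mention. First, one expands $A_\ell$ via \eqref{A} as a heat-time integral of $\Im(\bar\psi_\ell \psi_s)$ and invokes the refined bilinear estimate \eqref{bls}, which controls $(P_j\bar\psi_x(s))(P_k\psi_x(\tilde s))$ in $L^2_{t,x}$ with both off-diagonal decay $2^{-|j-k|/2}$ and heat-flow decay $(1+s2^{2j})^{-4}(1+\tilde s2^{2k})^{-4}$; this in turn forces one to propagate $X_k$ bounds for $P_k\psi_x(s)$ along the heat flow (all of \S\ref{sec:heatflowbounds}). Second, even with \eqref{bls} the low-$A$ term is \emph{not} perturbative: the best bound is $\upsilon_k\sum_{j\le k-C_1}b_j\upsilon_j$ (Lemma \ref{lem:Nnonpert}), and since $\sum_j\upsilon_j^2\sim \varepsilon^2+E_0$ is only $O_{E_0}(1)$, this cannot be absorbed. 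The bootstrap is closed not by smallness but by a discrete Gronwall argument: setting $B_k=1+\sum_{j<k}b_j^2$ one gets $B_{k+1}\le B_k(1+C\upsilon_k^2)$, which telescopes to $B_k\lesssim \exp(C\sum_j\upsilon_j^2)\lesssim 1$ and then feeds back into $b_k^2\lesssim B_k\upsilon_k^2$. (The paper also gives a time-subdivision alternative.) Without one of these closing mechanisms the argument does not go through for energies that are merely below $\Ec$.
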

\begin{proof}
The proof is by a standard continuity argument where we make the bootstrap hypothesis
$b_k \leq \varepsilon^{-\frac12} \upsilon_k$. Next we apply $P_k$
to the covariant Schr\"odinger equation in \eqref{NLSH} and
apply the main linear estimate \eqref{mainlinearestimate}. 
This reduces the problem to controlling $P_k \cN_m$ in the $Y_k(I)$ spaces.
Part of $P_k \cN_m$ is perturbative, in that it can be bounded in $Y_k(I)$
by $\varepsilon^2 b_k$. This is proved in Lemma \ref{lem:Npert} below.
Remaining is the non-perturbative part of $P_k \cN_m$. In \S \ref{sec:closer}, we provide
two separate arguments that address how to deal with the non-perturbative part and close
the bootstrap.
In both arguments we need to control $X_k(I)$ bounds
of $P_k \psi_x$ along the heat flow; 
such bounds are established in \S \ref{sec:heatflowbounds}.
\end{proof}
\begin{rem}
In the proofs we work with the $\sigma = 0$ case, which is the critical case to establish.
The same proofs are valid for $\sigma = \sigma_1 > 0$ provided that in controlling the 
Littlewood-Paley decompositions we use the
$\sigma = \sigma_1$ frequency envelope for the highest frequency term and the $\sigma = 0$
frequency envelopes for the remaining terms.
See \cite[\S 7]{Sm11} for additional related remarks.
\end{rem}
Our main technical tool is the following bilinear estimate, established by the first author in \cite{Do12}.
Various precursors to this estimate appear in
\cite{BeIoKeTa11, Sm10, Sm11}. In \cite{Do12}, the estimate is established for the target $\Sp^2$,
but the proof also applies to the case where the target is $\H^2$.
\begin{thm}
If a solution $\psi_x$ of \eqref{SM-gauge} in the caloric gauge satisfies \eqref{ScatteringSize}
and has $L^2_x$ norm less than $E_0^{\frac12}$, then
\begin{equation} \label{bls}
\lVert (P_j \bar{\psi}_x(s)) (P_k \psi_x(\tilde{s})) \rVert_{L^2_{t,x}(I \times \R^2)}
\lesssim
2^{-\frac{|j - k|}{2}} \upsilon_j \upsilon_k (1 + s 2^{2j})^{-4} (1 + \tilde{s}2^{2k})^{-4}
\end{equation}
\end{thm}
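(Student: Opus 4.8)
The plan is to establish \eqref{bls} by assembling three ingredients: parabolic smoothing along the harmonic map heat flow to extract the decay factors $(1+s2^{2j})^{-4}$ and $(1+\tilde s2^{2k})^{-4}$; the two-dimensional bilinear Strichartz inequality to produce the off-diagonal gain $2^{-|j-k|/2}$; and the $X_k/Y_k$ function-space framework to transfer the free bilinear estimate to the genuine covariant solution while keeping the constant at the level of $\upsilon_j\upsilon_k$ rather than of the larger a priori quantity $b_jb_k$. Since the $L^2_{t,x}$ norm is insensitive to complex conjugation we may take $j\le k$ throughout, and the two heat-time variables and the two frequency parameters are handled independently. Modulo the heat-flow decay it then suffices to prove the core estimate $\lVert (P_j\bar\psi_x(0))(P_k\psi_x(0))\rVert_{L^2_{t,x}(I\times\R^2)} \lesssim 2^{-|j-k|/2}\upsilon_j\upsilon_k$ for $j\le k$.

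The decay factors come from the parabolic equation $(\partial_s-\Delta)\psi_x = U$ in \eqref{NLSH}. Writing Duhamel's formula $P_j\psi_x(s) = P_je^{s\Delta}\psi_x(0)+\int_0^s P_je^{(s-s')\Delta}U(s')\,ds'$, one uses that the multiplier $P_je^{\tau\Delta}$ has kernel of $L^1$ mass $\lesssim e^{-c\tau 2^{2j}}$, which already dominates $(1+\tau 2^{2j})^{-4}$; this disposes of the first term directly. For the Duhamel term one splits at $s'=s/2$, using the semigroup decay on $s'\le s/2$ and, on $s'\ge s/2$, the facts that $U$ is at least quadratic in $(\psi_x,A_x,\partial_xA_x)$ and that the caloric-gauge parabolic bounds \eqref{A-energy}--\eqref{psi-energy} already carry positive powers of $s'$; iterating a bounded number of times yields the clean factor $(1+s2^{2j})^{-4}$, and likewise $(1+\tilde s2^{2k})^{-4}$ for the $P_k$ factor. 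These are exactly the heat-flow bounds of \S\ref{sec:heatflowbounds}.

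For the core estimate I would start from the free bilinear Strichartz bound $\lVert (e^{it\Delta}P_jf)(e^{it\Delta}P_kg)\rVert_{L^2_{t,x}} \lesssim 2^{(j-k)/2}\lVert f\rVert_{L^2_x}\lVert g\rVert_{L^2_x}$ for $j\le k$ (and the trivial $L^4\times L^4\to L^2$ bound for $|j-k|=O(1)$), realized at the level of the function spaces as $\lVert (P_ju)(P_kv)\rVert_{L^2_{t,x}}\lesssim 2^{-|j-k|/2}\lVert u\rVert_{X_j(I)}\lVert v\rVert_{X_k(I)}$ in the spirit of the bilinear estimates of \cite{BeIoKeTa11} and of \eqref{bilinear}: one pairs a local-smoothing norm $L^{\infty,2}_{\theta,\lambda}$ of the high-frequency factor (carrying $2^{-k/2}$) against a maximal-function norm $L^{2,\infty}_{\theta,W}$ of the low-frequency factor (carrying $2^{j/2}$) along a common direction $\theta$, uses the Galilean-boosted spaces to legitimize the two-dimensional maximal bound of Lemma \ref{lem:LSMF}, and uses the lateral pieces $L^{3,6}_\theta$, $L^{6,3}_\theta$ of $X_k$ to treat the near-diagonal block $|j-k|\le 80$. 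One then invokes Duhamel's formula for $(i\partial_t+\Delta)\psi_m=\cN_m$ together with Proposition \ref{prop:MainLinearEstimate}, splitting $P_k\psi_x$ into $e^{i(t-t_0)\Delta}P_k\psi_x(t_0)$ plus a term controlled in $X_k(I)$ by $\lVert P_k\cN_m\rVert_{Y_k(I)}$. The free-data part contributes $\beta_j\beta_k$ (via free bilinear Strichartz for the $\beta\beta$ interaction and via the $L^4_{t,x}$ Strichartz component of $X_k$ paired against energy for the mixed interactions), the Strichartz norm built into $X_k$ contributes $\alpha_j$ and $\alpha_k$, and the nonlinear correction contributes only terms $\varepsilon$-small relative to $\upsilon_j\upsilon_k$: the perturbative part of $\cN_m$ is bounded in $Y_k$ by $\varepsilon^2b_k$ via Lemma \ref{lem:Npert}, while the non-perturbative part is removed using the caloric-gauge cancellations of \S\ref{sec:closer}. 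Collecting the four interaction types produces the constant $(\alpha_j+\beta_j)(\alpha_k+\beta_k)=\upsilon_j\upsilon_k$.

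The main obstacle will be the near-diagonal regime $|j-k|=O(1)$, where there is no dyadic frequency gap to feed into bilinear Strichartz and nearly-parallel, low-modulation interactions must instead be absorbed by the lateral norms $L^{3,6}_\theta$, $L^{6,3}_\theta$ and the Galilean families $W_k$; coordinating the directional decomposition with these boosts, uniformly in the regularization parameter $\cK$, is the delicate point. A secondary difficulty — and the genuinely new input of \cite{Do12} over the small-energy precursors \cite{BeIoKeTa11, Sm10, Sm11} — is to control the Duhamel correction at energies up to the ground state, where $\cN_m$ is not globally perturbative, while still landing on $\upsilon_j\upsilon_k$; this is precisely where the smallness \eqref{ScatteringSize} and the time-divisibility structure are indispensable.
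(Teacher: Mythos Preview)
The paper does not actually prove this theorem: it is quoted as a black-box input from \cite{Do12} (see the sentence immediately preceding the statement), and the remainder of the paper---in particular all of \S\ref{sec:heatflowbounds} and \S\ref{sec:closer}---is built \emph{on top of} \eqref{bls}. Your proposal reverses this logical order, and that is where it breaks.

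Concretely, in your third paragraph you want to upgrade the bilinear bound from $b_jb_k$ to $\upsilon_j\upsilon_k$ by writing $P_k\psi_x = e^{i(t-t_0)\Delta}P_k\psi_x(t_0) + (\text{Duhamel})$ and claiming the Duhamel correction is $\varepsilon$-small relative to $\upsilon_k$. But Lemma~\ref{lem:Npert} only gives $\lVert P_k(\cN_m+2iA_\ell\partial_\ell\psi_m)\rVert_{Y_k(I)}\lesssim \varepsilon^2 b_k$, not $\varepsilon^2\upsilon_k$; turning $b_k$ into $\upsilon_k$ is exactly the content of Theorem~\ref{thm:maintheorem-tech}, whose proof \emph{uses} \eqref{bls}. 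Worse, for the non-perturbative piece $A_\ell\partial_\ell\psi_m$ you appeal to ``the caloric-gauge cancellations of \S\ref{sec:closer}'', but every lemma in that section (Lemmas~\ref{lem:HH}, \ref{lem:LH-1}, \ref{lem:Nnonpert}, and the $K_1$--$K_4$ decomposition of Argument~I) invokes \eqref{bls} directly. So the argument is circular on both counts.

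The genuine proof in \cite{Do12} must therefore establish \eqref{bls} \emph{prior to} and independently of any $X_k$ control of the solution---using only the hypotheses \eqref{ScatteringSize} and the $L^2_x$ energy bound, together with the caloric-gauge parabolic estimates \eqref{A-energy}--\eqref{psi-energy}. Your ingredients (parabolic decay, bilinear Strichartz, a Duhamel decomposition) are plausibly the right ones, but the nonlinear interaction has to be closed by a self-contained bootstrap on the bilinear quantity itself (or on a related $L^4$-type envelope), not by borrowing the $X_k/Y_k$ machinery that the present paper develops downstream.
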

Related to this are the following $L^2$-based estimates, 
established in the proof of \cite[Theorem 6.3]{Do12} (see equation (6.107) in that work).
\begin{lem} \label{lem:L2bounds}
If a solution $(\psi_x, A_\alpha)$ of \eqref{SM-gauge} in the caloric gauge satisfies \eqref{ScatteringSize}
and $\psi_x$ has $L^2_x$ norm less than $E_0^{\frac12}$, then
\[
\lVert A_x \rVert_{L^4_{t,x}}^2 + \lVert \psi_x \rVert_{L^4_{t,x}}^2 +
\lVert \partial_x A_x \rVert_{L^2_{t,x}}+ \lVert A_t \rVert_{L^2_{t,x}}
\lesssim \varepsilon^2
\]
where all norms are taken over the space-time slab $I \times \R^2$.
\end{lem}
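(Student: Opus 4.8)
The plan is to treat the four quantities separately, the $\psi_x$ bound being essentially free and the three connection-coefficient bounds reducing to a single bilinear-plus-cubic analysis of the caloric-gauge representation \eqref{A}.

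The bound for $\psi_x$ is immediate: by the embedding $\ell^2L^4\hookrightarrow L^4$ recorded in the introduction and the hypothesis \eqref{ScatteringSize},
\[
\lVert\psi_x\rVert_{L^4_{t,x}}^2\le\lVert\psi_x\rVert_{\ell^2L^4_{t,x}}^2\le\varepsilon^2 .
\]
For the connection coefficients I would start from \eqref{A} at $s=0$, using \eqref{HF-freeze} to substitute $\psi_s=D_j\psi_j=\partial_j\psi_j+iA_j\psi_j$, so that each of $A_x(0)$, $\partial_xA_x(0)$, $A_t(0)$ is the sum of a \emph{bilinear} heat-time integral $\int_0^\infty\Im(\bar\psi_\alpha\,\partial_j\psi_j)(s)\,ds$ (with one extra $\partial_x$ distributed over the factors in the $\partial_xA_x$ case) and a \emph{cubic} remainder $\int_0^\infty\Re(\bar\psi_\alpha A_j\psi_j)(s)\,ds$; here $\alpha$ is a spatial index for $A_x$ and $\alpha=0$ for $A_t$, and in the latter case the outer factor $\psi_0$ satisfies heat-flow bounds parallel to those of $\psi_x$ by the remark following \eqref{psi-energy} (and \cite{Sm09}).

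The bilinear term is where \eqref{bls} does the work. I would Littlewood-Paley decompose both inputs, integrate by parts so that interior derivatives fall on the lower frequency, and apply \eqref{bls} to each product $(P_j\bar\psi_x(s))(P_k\psi_x(s))$: the heat-kernel weights $(1+s2^{2j})^{-4}(1+s2^{2k})^{-4}$ make the $s$-integral converge with a gain $2^{-2\max(j,k)}$, while the off-diagonal factor $2^{-|j-k|/2}$ and the frequency-envelope summation rules close the double sum over $j,k$ at each output frequency. This gives an $L^2_{t,x}$ bound; to land in $L^4_{t,x}$ for $A_x$ I would use Bernstein in space at low output frequency (where there is no smoothing gain) and Strichartz together with the $L^4_{t,x}$-smallness at high frequency, and the $\partial_xA_x$ and $A_t$ bounds (genuinely $L^2_{t,x}$ statements) follow along the same lines, the extra derivative in the former being absorbed by the additional smoothing it costs. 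The crucial structural point --- the reason the right-hand side is $\varepsilon^2$ and not $E_0^2$ --- is that each bilinear interaction can be charged two small $L^4$ factors: one input is literally $\psi_x$, contributing $\alpha_j\le\varepsilon$, while the other, coming from $\psi_s=D_j\psi_j$, carries the factor $2^{2k}$ at frequency $k$, so that after the $s$-integration its effective weight is again the scattering size $\alpha_k$ rather than the energy-sized $\upsilon_k$, whence $\sum_{j,k}2^{-|j-k|/2}\alpha_j\alpha_k\lesssim\lVert\alpha\rVert_{\ell^2}^2\lesssim\varepsilon^2$.

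The cubic remainder $\int_0^\infty\Re(\bar\psi_\alpha A_j\psi_j)(s)\,ds$ (and its $\partial_x$, $A_t$ variants) is perturbative: H\"older in space-time, with one $\psi_x$ in the $L^4_{t,x}$ scattering norm ($\le\varepsilon$), $A_x(s)$ and its derivatives bounded pointwise in $s$ by \eqref{A-energy}, the remaining $\psi$-factors by \eqref{psi-energy}, and the $s$-integral performed with the integrated bounds of \eqref{A-energy}--\eqref{psi-energy}; this produces a second small factor, so the cubic piece is $O(\varepsilon^2)$ and in fact lower order. The main obstacle is the coupling described above: \eqref{bls} is an $L^2_{t,x}$ estimate phrased through the energy-sized envelopes $\upsilon$, whereas the lemma demands an $L^4_{t,x}$ bound on $A_x$ at the genuinely small scale $\varepsilon^2$; threading the heat-time decay of \eqref{bls} through the smallness \eqref{ScatteringSize} so that both factors of every interaction are measured by their $L^4_{t,x}$-scattering contribution, and combining this with Bernstein/Strichartz and the envelope summation rules, is exactly the bookkeeping carried out by the first author in \cite{Do12}.
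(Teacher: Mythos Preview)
The paper gives no self-contained proof of this lemma; it simply records that these bounds are equation~(6.107), established in the course of proving \cite[Theorem~6.3]{Do12}. Your proposal likewise defers to \cite{Do12} in its final sentence, so at the level of ``what is actually proved here'' you and the paper agree.

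That said, the sketch you give has a logical-ordering problem worth flagging. You plan to drive the argument with the bilinear estimate \eqref{bls}, but \eqref{bls} is itself the main output of \cite{Do12}, and the $L^2$ bounds of the present lemma appear as (6.107) \emph{inside} the proof of Theorem~6.3 there --- they are an input to (or at least established en route to) \eqref{bls}, not a consequence of it, so invoking \eqref{bls} to recover them is circular. A symptom of this is visible in your own sketch: \eqref{bls} as stated delivers $\upsilon_j\upsilon_k$ on the right, and $\sum_{j,k}2^{-|j-k|/2}\upsilon_j\upsilon_k\lesssim\lVert\upsilon\rVert_{\ell^2}^2$ is energy-sized, not $\varepsilon^2$. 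Your paragraph arguing that both factors can be downgraded to $\alpha$'s is precisely where the genuine work lies, and that work must be done directly from \eqref{A}, the propagated $L^4$ control \eqref{DoL4}, and the energy bounds \eqref{A-energy}--\eqref{psi-energy}, rather than by quoting the finished \eqref{bls}. You implicitly acknowledge this when you conclude that the bookkeeping ``is exactly \dots\ carried out \dots\ in \cite{Do12}'', which brings your proposal back in line with the paper's one-line citation.
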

We have the following technical lemma from \cite[Lemma 5.2]{Sm11}:
\begin{lem} \label{lem:YkL2}
Let $f \in L^2_{t,x}$. Then
\[
\lVert P_k(f \psi_m)\rVert_{Y_k(I)} \lesssim \lVert f \rVert_{L^2_{t,x}(I \times \R^2)} b_k
\]
\end{lem}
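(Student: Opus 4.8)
The plan is to decompose the product $f\psi_m$ according to a Littlewood-Paley trichotomy in the frequencies of $f$ and $\psi_m$ and then to invoke the bilinear estimate \eqref{bilinear} term by term. Write $f = \sum_j P_j f$ and $\psi_m = \sum_{j'} P_{j'}\psi_m$, so that
\[
P_k(f\psi_m) = \sum_{j,j'} P_k\bigl((P_j f)(P_{j'}\psi_m)\bigr).
\]
By the usual frequency support considerations, the summand vanishes unless either $|j'-k|\le 4$ and $j\le k+4$ (low-high interaction, output frequency comparable to $\psi_m$), or $|j-k|\le 4$ and $j'\le k+4$ (high-low), or $j,j'\ge k-4$ with $|j-j'|\le 4$ (high-high). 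In each of the three regimes I would apply \eqref{bilinear} with $h = P_j f$ (noting $h\in L^2_{t,x}$ by hypothesis) and $f$ there taken to be $P_{j'}\psi_m\in X_{j'}(I)$; the required $X_{j'}$ membership comes from the bootstrap setup, where $\sum_{j'}\lVert P_{j'}\psi_x\rVert_{X_{j'}(I)}^2\sim\sum_{j'}b_{j'}^2<\infty$, and $\lVert P_{j'}\psi_m\rVert_{X_{j'}(I)}\le b_{j'}$ up to the slowly varying envelope factor.

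Concretely, in the low-high regime one gets from \eqref{bilinear} (case $j\le k-80$, or the $|j-k|\le 80$ case when $j$ is close to $k$) a bound of the form
\[
\bigl\lVert P_k\bigl((P_j f)(P_{j'}\psi_m)\bigr)\bigr\rVert_{Y_k(I)}
\lesssim 2^{-\frac{|j-k|}{2}}\lVert P_j f\rVert_{L^2_{t,x}}\,b_{j'},
\qquad j'\sim k,
\]
and since $b_{j'}\lesssim b_k 2^{\delta|j'-k|}\lesssim b_k$ for $|j'-k|\le 4$, summing the rapidly decaying geometric factor $2^{-|j-k|/2}$ over $j\le k+4$ and using Cauchy–Schwarz against $\bigl(\sum_j\lVert P_j f\rVert_{L^2_{t,x}}^2\bigr)^{1/2}\le\lVert f\rVert_{L^2_{t,x}}$ yields $\lesssim\lVert f\rVert_{L^2_{t,x}}b_k$. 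In the high-low regime the roles reverse: now $j\sim k$ and $j'$ ranges over $j'\le k+4$; \eqref{bilinear} gives a factor $2^{-|j'-k|/6}$ (the $k\le j-80$ branch, here with the labels swapped so that the $X$-factor is at the lower frequency) times $b_{j'}$, and since $\sum_{j'\le k}2^{-|j'-k|/6}b_{j'}\lesssim b_k$ by the frequency envelope summation rule (taking $\delta$ small enough that $1/6-\delta>0$), together with $\lVert P_kf\rVert_{L^2_{t,x}}\le\lVert f\rVert_{L^2_{t,x}}$ we again close. The high-high regime is handled similarly: the output frequency $k$ is below the input frequencies, so \eqref{bilinear} supplies $2^{-|j'-k|/6}$, and summing over $j,j'\ge k-4$ with $|j-j'|\le 4$ produces $\sum_{j'\ge k}2^{-|j'-k|/6}b_{j'}\lesssim b_k$ after Cauchy–Schwarz in the $f$-frequencies.

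The main obstacle I anticipate is purely bookkeeping rather than conceptual: one must be careful that in each regime the version of \eqref{bilinear} invoked has the $L^2_{t,x}$ factor on the correct (higher or lower) frequency piece, since that estimate is \emph{not} symmetric in $h$ and $f$, and one must verify that the resulting geometric decay in the frequency gap — $2^{-|j-k|/2}$ in the low-high and pure low cases, $2^{-|j-j'|/6}$ in the high-low and high-high cases — is in every instance summable against the $\ell^2$ mass of the $P_jf$ and against the frequency envelope $b_{j'}$, which forces the mild constraint $\delta<1/6$ already in force. Once the decay exponents are tracked correctly, the summation rules for frequency envelopes stated above and a single application of Cauchy–Schwarz in the free frequency index deliver the claimed bound $\lVert P_k(f\psi_m)\rVert_{Y_k(I)}\lesssim\lVert f\rVert_{L^2_{t,x}(I\times\R^2)}\,b_k$.
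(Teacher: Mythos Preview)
Your argument is essentially correct, but it is more elaborate than necessary, and there is a small slip in the high--low case. The paper does not actually give a proof of this lemma; it simply cites \cite[Lemma~5.2]{Sm11}. The natural argument (and presumably the one in the cited reference) is a strict subset of yours: since the bilinear estimate \eqref{bilinear} already allows the $L^2_{t,x}$ factor $h$ to be an \emph{arbitrary} function with no frequency localization, there is no need to decompose $f$ at all. One writes $\psi_m = \sum_{j'} P_{j'}\psi_m$, applies \eqref{bilinear} directly with $h=f$ to each summand $P_k(f\cdot P_{j'}\psi_m)$, and then sums the resulting bounds $2^{-c|j'-k|}\lVert f\rVert_{L^2_{t,x}}\,b_{j'}$ (with $c=\tfrac12$ for $j'\le k-80$, $c=\tfrac16$ for $j'\ge k+80$, and $c=0$ for $|j'-k|\le 80$) over $j'$ using the frequency envelope summation rules, which immediately yields $\lVert f\rVert_{L^2_{t,x}}\,b_k$.

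Your route, decomposing $f$ as well, still works, but note that in your high--low regime ($j\sim k$, $j'\le k$) the $X$-factor $P_{j'}\psi_m$ sits at frequency $j'\le k$, so the relevant branch of \eqref{bilinear} is the second one, yielding the stronger decay $2^{-|j'-k|/2}$ rather than $2^{-|j'-k|/6}$; your summation is of course unaffected since you only needed $c>\delta$.
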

Combining Lemmas \ref{lem:YkL2} and \ref{lem:L2bounds} yields
\[
\lVert P_k \left[ - i (\partial_j A_j) \psi_m + (A_t + A_x^2) \psi_m - i \mu \psi_j \Im(\bar{\psi}_j \psi_m) \right] \rVert_{Y_k(I)}
\lesssim
\varepsilon^2 b_k
\]
which proves the
\begin{lem} \label{lem:Npert}
The term $\cN_m + 2i A_\ell \partial_\ell \psi_m$ is perturbative.
\end{lem}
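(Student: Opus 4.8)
The plan is to show that the term $\cN_m + 2iA_\ell \partial_\ell \psi_m$, which by the definition \eqref{NLSHN} equals
\[
-i(\partial_j A_j)\psi_m + (A_t + A_x^2)\psi_m - i\mu\,\psi_j\,\Im(\bar{\psi}_j\psi_m),
\]
is perturbative in the sense that its $P_k$-projection is bounded in $Y_k(I)$ by $\varepsilon^2 b_k$. The strategy is entirely structural: each of the three summands is a product of the form $f\,\psi_m$ where $f$ is a quadratic (or higher) expression in $A_x$, $A_t$, $\partial_x A_x$, $\psi_x$ whose $L^2_{t,x}$ norm is already controlled by $\varepsilon^2$ via Lemma \ref{lem:L2bounds}, so we are in a position to apply Lemma \ref{lem:YkL2} with that choice of $f$.

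First I would treat the term $(\partial_j A_j)\psi_m$: apply Lemma \ref{lem:YkL2} with $f = \partial_j A_j$, noting $\lVert \partial_x A_x \rVert_{L^2_{t,x}} \lesssim \varepsilon^2$ from Lemma \ref{lem:L2bounds}, to get the bound $\varepsilon^2 b_k$. Next, for $(A_t + A_x^2)\psi_m$, I would split into the two pieces; Lemma \ref{lem:L2bounds} gives $\lVert A_t \rVert_{L^2_{t,x}} \lesssim \varepsilon^2$ directly, and $\lVert A_x^2 \rVert_{L^2_{t,x}} = \lVert A_x \rVert_{L^4_{t,x}}^2 \lesssim \varepsilon^2$, so again Lemma \ref{lem:YkL2} applies with $f = A_t + A_x^2$. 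Finally, for $\psi_j\,\Im(\bar{\psi}_j\psi_m)$, I would group $f = \Im(\bar{\psi}_j\psi_m)$ (for each $j$) so that $\lVert f \rVert_{L^2_{t,x}} \leq \lVert \psi_x \rVert_{L^4_{t,x}}^2 \lesssim \varepsilon^2$, and apply Lemma \ref{lem:YkL2} with the remaining factor $\psi_j$ playing the role of $\psi_m$ in that lemma; summing over the $O(1)$ values of $j$ preserves the estimate. Adding the three contributions yields $\lVert P_k[\cN_m + 2iA_\ell\partial_\ell\psi_m]\rVert_{Y_k(I)} \lesssim \varepsilon^2 b_k$, which is exactly the assertion that this term is perturbative.

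The only subtlety — and it is already packaged into the cited lemmas — is that Lemma \ref{lem:YkL2} is stated for a product $f\psi_m$ where $\psi_m$ is one of the gauge fields whose frequency-localized $X_j$ norms are summarized by the envelope $b_k$; in the cubic term we must be careful to assign the role of "$\psi_m$" to a single $\psi_j$ factor and the role of "$f$" to the scalar real-valued quantity $\Im(\bar{\psi}_j\psi_m)$, which lies in $L^2_{t,x}$ by Cauchy--Schwarz and Lemma \ref{lem:L2bounds}. I do not anticipate a genuine obstacle here: the content of the lemma is essentially the observation, already recorded in the excerpt immediately before the statement, that "Combining Lemmas \ref{lem:YkL2} and \ref{lem:L2bounds}" gives precisely the displayed bound $\varepsilon^2 b_k$. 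Thus the proof is a one-line citation of that combination, and the statement follows.
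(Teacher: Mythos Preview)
Your proposal is correct and is exactly the paper's approach: the paper's proof is nothing more than the sentence ``Combining Lemmas \ref{lem:YkL2} and \ref{lem:L2bounds} yields [the bound $\varepsilon^2 b_k$],'' and your term-by-term unpacking is precisely what that combination amounts to. The only thing to note is that your handling of the cubic term (taking $f=\Im(\bar\psi_j\psi_m)$ and letting $\psi_j$ play the role of the $X_k$-controlled factor) is the intended reading of Lemma \ref{lem:YkL2}, since $b_k$ is defined uniformly over the spatial components $\psi_x$.
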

We address the remaining non-perturbative term $2i A_\ell \partial_\ell \psi_m$ in \S \ref{sec:closer}.
Here we show that we can return from the gauge formulation to the map formulation
with the following
\begin{lem}
It holds that
\[
\lVert \nabla \phi \rVert_{\dot{H}^\sigma}^2 
\lesssim 
\sum_{k \in \Z} \sum_{\sigma^\prime = 0}^{2 \sigma - 1} b_k^2(\sigma^\prime)
\]
\end{lem}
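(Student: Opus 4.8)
The plan is to translate the statement into the caloric gauge, where $\nabla\phi$ is (the frame image of) $\psi_x$, and then to read the bound off the definitions of the envelopes $b_k(\sigma')$. We may take $\sigma$ to be a positive integer, and since $\nabla\phi$ is Littlewood--Paley summable and $X_k(I)\hookrightarrow L^\infty_t L^2_x$, it suffices to bound $\sum_k 2^{2\sigma k}\lVert P_k\nabla\phi(t)\rVert_{L^2_x}^2 \sim \lVert\nabla\phi(t)\rVert_{\dot H^\sigma}^2$ at each $t\in I$ by the stated (time-global) right-hand side. For the target $\H^2$, where $\phi$ is unbounded as an $\R^3$-valued map, the bounded factors occurring below are, after a standard reorganization, of the form $\phi - Q$ (finite mass), controlled as in \cite{Tao04, T4}; this causes no difficulty.

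First I would differentiate the identity $\partial_x\phi = e(\psi_x)$. Using that for a field $e(v)$ along $\phi$ tangent to $M$ one has $\partial_x\bigl(e(v)\bigr) = e(D_x v) + \mathrm{II}\bigl(\partial_x\phi, e(v)\bigr)$, that $\mathrm{II}(\partial_x\phi,\partial_x\phi)$ is $|\psi_x|^2$ times a pointwise-bounded multiple of $\phi$, and iterating, I obtain a schematic identity expressing $\partial_x^{\sigma}(\partial_x\phi)$ as $e\bigl(D_x^{\sigma}\psi_x\bigr)$ plus a finite sum of terms, each a product of factors $\partial_x^{a}\psi_x$ and $\partial_x^{b}A_x$ times a bounded factor, each term homogeneous of degree $\sigma + 1$ when $\psi_x$, $A_x$, and $\partial_x$ are given weight one, and each carrying at least one derivative. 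This is the computation already carried out in \cite{Sm09, Sm10, Sm11}. Expanding $D_x = \partial_x + iA_x$ splits the leading term as $e(\partial_x^\sigma\psi_x)$ plus terms containing a factor $A_x$; since $|e|\lesssim 1$, the pure term is controlled directly by
\[
\sum_k 2^{2\sigma k}\lVert P_k\psi_x\rVert_{L^\infty_t L^2_x}^2 \;\le\; \sum_k 2^{2\sigma k}\lVert P_k\psi_x\rVert_{X_k(I)}^2 \;\sim\; \sum_k b_k(\sigma)^2 .
\]

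It remains to treat the lower-order terms, including the $A_x$-terms just split off. By \eqref{A} and \eqref{HF-freeze} one has $A_x = -\int_0^\infty \Im(\bar\psi_x\psi_s)\,ds$, so each $\partial_x^{b}A_x$ unwinds into a heat-time integral of a quadratic expression in derivatives of $\psi_x$ and $\psi_s$, the latter obeying the analogues of \eqref{psi-energy}. I would then estimate each resulting product by a Littlewood--Paley paraproduct decomposition: the highest-frequency, most-differentiated factor goes into $L^\infty_t L^2_x$, where, after Bernstein in $\R^2$, it is caught by $b_k(\sigma')$ for some $\sigma' \le 2\sigma - 1$ (a comfortably large exponent, absorbing both the derivative bookkeeping and the $2^k$ loss in trading $L^2_x$ for $L^\infty_x$), while each remaining factor goes into $L^4_{t,x}$, $L^\infty_x L^2_t$, or a suitable mixed norm and is bounded using Lemma \ref{lem:L2bounds}, the estimates \eqref{A-energy}--\eqref{psi-energy}, the $X_k$-type heat-flow bounds of \S\ref{sec:heatflowbounds}, and the frequency-envelope summation rules after resumming in $s$. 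Since these remaining factors contribute an honest power of a $\psi_x$-norm rather than merely of $\varepsilon$, each term is dominated by $\sum_k b_k(\sigma')^2$ with $\sigma' \le 2\sigma-1$; summing over the finitely many terms and values of $\sigma'$ completes the proof.

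The hard part will be precisely this last step: organizing the finitely many multilinear lower-order terms so that, over every way of distributing the at most $\sigma$ available derivatives among the $\psi_x$-factors and over every heat time produced by unwinding the $A_x$'s, the dominant factor is always captured by an envelope $b_k(\sigma')$ with $\sigma' \le 2\sigma - 1$, while every other factor lands in a space controlled by the caloric-gauge bounds \eqref{A-energy}--\eqref{psi-energy} and Lemma \ref{lem:L2bounds}. The bilinear estimates \eqref{bls}--\eqref{bilinear} and the frequency-envelope summation rules then do the rest; everything outside this bookkeeping is a routine application of the Leibniz rule.
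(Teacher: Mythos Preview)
Your proposal is essentially correct and follows the same strategy as the paper's (very terse) proof: both pass to the frame representation $\nabla\phi = e(\psi_x)$, use a Littlewood--Paley/paraproduct decomposition, and control the lower-order cross terms arising from derivatives of the frame by the caloric-gauge bounds \eqref{A-energy}--\eqref{psi-energy} together with Cauchy--Schwarz. Your Leibniz expansion of $\partial_x^\sigma(e\psi_x)$ is in fact a more explicit realization of what the paper merely sketches as ``Littlewood--Paley decomposition, enough integrations by parts, and Cauchy--Schwarz''; the only caveat is that your placement of the secondary factors in spacetime norms like $L^4_{t,x}$ should, strictly speaking, be done in fixed-time $L^p_x$ norms (via Bernstein and the $L^\infty_t L^2_x$ component of $X_k$), but this is a minor bookkeeping adjustment.
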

\begin{proof}
The statement is far from optimal.
A stronger estimate is established in \cite[\S 4.6]{Sm10}, but under a certain smallness assumption.
The smallness assumption is not need, however, for the $\sigma = 0$ case, and this carries over
without modification. 
The same argument works for $\sigma > 0$
except for certain high-low frequency interactions for $\sigma > 0$.
To derive the claimed expression for $\sigma > 0$,
consider $\lVert \nabla \phi \rVert_{\dot{H}^\sigma}^2$ as an expression bilinear in $\nabla \phi$
and project the product to frequencies $\sim 2^k$. Then use a standard Littlewood-Paley decomposition,
enough integrations by parts, and Cauchy-Schwarz.
\end{proof}
Theorem \ref{thm:maintheorem-tech} combined with the preceding lemma
and the local result stated in Theorem \ref{thm:local} establish Theorem \ref{thm:maintheorem}.

\section{Bounds along the heat flow} \label{sec:heatflowbounds}

In this section we prove that $X_k$ bounds of $P_k \psi_x$ propagate along the heat flow and exhibit decay.
We make frequent use of the Duhamel representation
\begin{equation} \label{Duhamel}
\psi_m(s) = e^{s \Delta} \psi_m(0) + \int_0^s e^{(s - s^\prime)\Delta} U_m(s^\prime) ds^\prime
\end{equation}
with $U_m$ as in \eqref{NLSHN}.
For frequency envelope definitions, see \S \ref{sec:mainresult}.
\begin{thm} \label{thm:HF0}
If a solution $\psi_x$ of \eqref{SM-gauge} in the caloric gauge 
has $L^2_x$ norm less than $E_0^{\frac12}$, then
\begin{equation} \label{HF0}
\lVert P_k \psi_x(s) \rVert_{X_k(I)} \lesssim \upsilon_k (1 + s 2^{2k})^{-4}
\end{equation}
\end{thm}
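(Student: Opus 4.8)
The plan is to run a continuity/bootstrap argument in the heat-flow variable $s$, exactly parallel to the time-evolution bootstrap used to prove Theorem \ref{thm:maintheorem-tech}, but now for the parabolic equation $(\partial_s - \Delta)\psi_m = U_m$ with Duhamel representation \eqref{Duhamel}. The bootstrap hypothesis is the claimed bound \eqref{HF0} with an enlarged constant, say $\lVert P_k\psi_x(s)\rVert_{X_k(I)} \leq C_0 \upsilon_k (1 + s2^{2k})^{-4}$, and we aim to recover it with $C_0$ replaced by $\tfrac12 C_0$ (plus the $O(1)$ data contribution). The initial data term $e^{s\Delta}\psi_m(0)$ is handled directly: $P_k$-localized heat propagation gains the factor $(1+s2^{2k})^{-N}$ for any $N$ by the rapid decay of $e^{-s|\xi|^2}$ on $\Xi_k$, and $\lVert P_k e^{s\Delta}\psi_m(0)\rVert_{X_k(I)} \lesssim \lVert P_k\psi_m(0)\rVert_{L^2_x} \lesssim \beta_k \leq \upsilon_k$ follows from the Strichartz/lateral/local-smoothing estimates in the $X_k^0$ and $X_k$ norms applied to a single frequency shell (these are the same linear estimates recorded in Lemmas \ref{lem:Lateral}--\ref{lem:LSMF}, where the heat semigroup only helps). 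So the whole game is the Duhamel term $\int_0^s e^{(s-s')\Delta} P_k U_m(s')\,ds'$.

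For the Duhamel term I would estimate, for each fixed $s'$, the $X_k(I)$ norm of $P_k[e^{(s-s')\Delta}U_m(s')]$ by $(1+(s-s')2^{2k})^{-10} \lVert P_k U_m(s')\rVert_{Y_k(I)}$ or by a similar $L^2$-type bound, then integrate in $s'$ against the decay coming from the bootstrap hypothesis at scale $s'$. The nonlinearity $U_\alpha = 2iA_j\partial_j\psi_\alpha + i(\partial_jA_j)\psi_\alpha - A_x^2\psi_\alpha + i\mu\psi_j\Im(\bar\psi_j\psi_\alpha)$ is, term by term, of the schematic form $A_x \cdot \partial\psi_x$, $(\partial A_x)\psi_x$, $A_x^2\psi_x$, and $\psi_x^3$. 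The cubic-in-$\psi_x$ term is controlled by the bilinear estimate \eqref{bls} together with the heat-flow smoothing bounds \eqref{psi-energy} for the remaining factor, which directly produce the product of $\upsilon$'s and the $(1+s2^{2j})^{-4}$ factors; here the key point is that \eqref{bls} is already stated with the heat-time decay built in. The $A_x$-terms are handled by combining Lemma \ref{lem:YkL2} (which converts an $L^2_{t,x}$ coefficient times $\psi_m$ into a $Y_k$ bound with the factor $b_k$) with the $A_x$ bounds \eqref{A-energy} along the heat flow and with Lemma \ref{lem:L2bounds} for the $L^2_{t,x}$ pieces; the frequency-envelope summation rules absorb the Littlewood-Paley bookkeeping, and the smallness $\varepsilon^2$ coming from \eqref{ScatteringSize} is what beats the bootstrap constant.

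The main obstacle I anticipate is bookkeeping the interaction between the two scales $s$ and $s'$ and the frequency $2^k$ in the convolution $\int_0^s (1+(s-s')2^{2k})^{-10}(1+s'2^{2k})^{-4}\,ds'$: one must check that this integral reproduces $(1+s2^{2k})^{-4}$ rather than losing a power or a logarithm, which it does because the convolution of a fast-decaying kernel with a power-type tail returns the same tail up to a constant (splitting at $s'=s/2$). A secondary subtlety is the term $2iA_j\partial_j\psi_m$: the derivative lands on $\psi_m$ and must be distributed by Littlewood-Paley so that the worst case is a low-frequency $A_x$ times a comparable-frequency $\psi_x$, where one pays $2^k$ from $\partial_j$ but recovers it from $2^{-k/2}$-type factors in the $Y_k$ norm and the $L^2_{t,x}$ smallness of $\partial_x A_x$ from Lemma \ref{lem:L2bounds}; the high-high and high-low cases are better. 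Finally, as in the Remark following Theorem \ref{thm:maintheorem-tech}, the same argument gives the $\sigma > 0$ statement after placing the $\sigma_1$-envelope on the top frequency and the $\sigma=0$ envelopes elsewhere, so I would only write out $\sigma = 0$ in detail.
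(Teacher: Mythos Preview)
Your plan has a structural gap. You propose to bound $\lVert P_k[e^{(s-s')\Delta}U_m(s')]\rVert_{X_k(I)}$ by a heat-decay factor times $\lVert P_kU_m(s')\rVert_{Y_k(I)}$, but no such estimate is available: the main linear estimate (Proposition~\ref{prop:MainLinearEstimate}) converts $Y_k$ forcing into $X_k$ control through the \emph{Schr\"odinger} Duhamel integral in $t$, not through the heat semigroup in $s$. The operator $e^{s\Delta}$ is only a spatial Fourier multiplier; on the shell $\Xi_k$ it maps $X_k$ to $X_k$ with decay $e^{-cs2^{2k}}$, but it cannot upgrade a $Y_k$ or bare $L^2_{t,x}$ bound on $U_m$ to an $X_k$ bound on the integral. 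In particular the $L^\infty_tL^2_x$, local-smoothing, and maximal-function components of $X_k$ are not recoverable this way. Your treatment of the linear piece has the same confusion: the inequality $\lVert P_ke^{s\Delta}\psi_m(0)\rVert_{X_k(I)}\lesssim\lVert P_k\psi_m(0)\rVert_{L^2_x}$ does not follow from Lemmas~\ref{lem:Lateral}--\ref{lem:LSMF}, which concern the free Schr\"odinger propagator $e^{it\Delta}f$ acting on $L^2_x$ data, not an arbitrary space-time function $\psi_m(0,t,x)$ mollified in $x$. What translation invariance actually gives here is control by $\lVert P_k\psi_x(0)\rVert_{X_k(I)}$.

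The paper's argument is organized quite differently. It first proves three auxiliary estimates \eqref{HF1}--\eqref{HF3}: the $L^\infty_tL^2_x$ bound, an $L^2_{t,x}$ bound on the nonlinear heat-Duhamel piece carrying an extra \emph{gain} $(s^{-1/2}+2^k)^{-1}$, and---this is the ingredient entirely absent from your plan---an $L^2_{t,x}$ bound on the \emph{Schr\"odinger forcing} $(\partial_t-i\Delta)P_k\psi_x(s)$. The last is obtained not from the heat equation at all but from the compatibility relation $D_t\psi_x=D_x\psi_t$, which converts $\partial_t\psi_x$ into $\partial_x\psi_t+iA_x\psi_t-iA_t\psi_x$ and thereby into quantities controllable by the $\psi_t$ and $A_t$ bounds. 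With \eqref{HF3} in hand the $X_k$ norm is then assembled by a \emph{modulation decomposition} in the dual Schr\"odinger variable $\tau$: at high modulation $|\tau|\gg\max(2^{2k},s^{-1})$ the bound \eqref{HF3} gives $L^2_{t,x}$ control with a gain $2^{-2j}$ that Sobolev-embeds into every component of $X_k$; at low modulation one uses the heat Duhamel split, the linear part by translation invariance of $X_k$ and the nonlinear part by \eqref{HF2} plus Sobolev, where the extra factor $(s^{-1/2}+2^k)^{-1}$ is exactly what pays for the embedding. Finally, the case $|I|<\infty$ requires a separate low-frequency treatment, bounding each piece of the $X_k$ norm individually via \eqref{maxestsuff} and the compatibility relation again. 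None of this machinery---the compatibility relation, the bound on $(\partial_t-i\Delta)\psi_x$, or the high/low modulation split---appears in your outline, and without it there is no mechanism to recover the full $X_k$ norm from heat-flow considerations alone.
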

\begin{proof}
The following three inequalities play a key role in the proof and will be established
in subsequent lemmas:
\begin{align}
\label{HF1}
\lVert P_k \psi_x(s) \rVert_{L^\infty_t L^2_x}
&\lesssim
\upsilon_k (1 + s 2^{2k})^{-4}
\\
\label{HF2}
\lVert P_k \left[ \psi_x(s) - e^{s \Delta} \psi_x(0) \right] \rVert_{L^2_{t,x}} 
&\lesssim
\upsilon_k (s^{-\frac12} + 2^k)^{-1} (1 + s 2^{2k})^{-4}
\\
\label{HF3}
\lVert (\partial_t - i \Delta) P_k \psi_x(s) \rVert_{L^2_{t,x}}
&\lesssim
\upsilon_k (s^{-\frac12} + 2^k) (1 + s 2^{2k})^{-4}
\end{align}

First we prove \eqref{HF0} for high modulations. Using \eqref{HF3}, we have
\[
\lVert P_{|\tau| \sim 2^{2j}, |\xi| \sim 2^k} \psi_x(s) \rVert_{L^2_{t,x}}
\lesssim
2^{-2j} (s^{-\frac12} + 2^k) \upsilon_k (1 + s 2^{2k})^{-4}
\]
Then \eqref{HF0} is established for $P_{|\tau| > 2^{2k + 20}, |\xi| \sim 2^k}$ when $s > 2^{-2k}$, $j > k + 10$
and for $P_{|\tau| > 2^{20} s^{-1}, |\xi| \sim 2^k}$, $s < 2^{-2k}$, $j > k + 10$ by appropriate Sobolev embeddings.
For the low modulation case, we expand $\psi_x$ using \eqref{Duhamel}. 
The bound for the linear flow follows from the translation invariance of the $X_k$ norms:
\[
\lVert e^{s \Delta} P_k \psi_x(0) \rVert_{X_k} \lesssim \upsilon_k (1 + s 2^{2k})^{-4}
\]
To obtain the bound for $P_k \left[ \psi_x(s) - e^{s \Delta} \psi_x(0) \right]$, we combine \eqref{HF2}
with appropriate Sobolev embeddings, separately considering $s > 2^{-2k}$ and $s < 2^{-2k}$.

This completes the proof for the case $I = \R$. We return to the general case after
establishing several lemmas.
\end{proof}

In \cite{Do12} it is established that $L^4$ control propagates along the heat flow:
\begin{equation} \label{DoL4}
 \| P_{k} \psi_{x}(s) \|_{L_{t,x}^{4}} \lesssim \upsilon_k(1 + s 2^{2k})^{-4}
\end{equation}
This is complemented by the following result.
\begin{lem}
It holds that
\begin{equation} \label{PkAL4}
\lVert P_k A_x(s) \rVert_{L^4_{t,x}} \lesssim \upsilon_k (1 + s 2^{2k})^{-4}
\end{equation}
\end{lem}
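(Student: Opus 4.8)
The plan is to parallel the argument behind \eqref{DoL4} and \eqref{HF1}, now estimating $A_x$ rather than $\psi_x$. Recalling \eqref{A} together with \eqref{HF-freeze}, write
\[
P_k A_x(s) = -\int_s^\infty P_k \Im\bigl( \bar{\psi}_x(s')\, \psi_s(s') \bigr)\, ds',
\]
so that Minkowski's inequality gives
\[
\lVert P_k A_x(s) \rVert_{L^4_{t,x}} \leq \int_s^\infty \bigl\lVert P_k\bigl( \bar{\psi}_x(s')\, \psi_s(s') \bigr) \bigr\rVert_{L^4_{t,x}}\, ds'.
\]
I would then insert a Littlewood--Paley decomposition $\bar{\psi}_x \psi_s = \sum_{j_1, j_2} (P_{j_1}\bar{\psi}_x)(P_{j_2}\psi_s)$ and keep only the three interaction regimes $j_1 \sim j_2 \gtrsim k$, $j_1 \sim k \gtrsim j_2$, and $j_2 \sim k \gtrsim j_1$. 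Besides \eqref{HF1} and \eqref{DoL4}, I would use the corresponding heat-flow bounds for $\psi_s = D_j \psi_j$, which at frequency $2^j$ behaves like $\partial_x \psi_x$ and hence gains a factor $2^j$, namely $\lVert P_j \psi_s(s) \rVert_{L^\infty_t L^2_x} + \lVert P_j \psi_s(s)\rVert_{L^4_{t,x}} \lesssim 2^j \upsilon_j (1 + s2^{2j})^{-4}$; these follow by the reasoning of this section together with the connection bounds \eqref{psi-energy}, and ultimately rest on the bilinear estimate \eqref{bls} via \cite{Do12}.

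For the two ``high--low'' regimes, where the output frequency $2^k$ coincides with the larger input frequency, no Bernstein gain is needed: I would apply H\"older in space-time, placing the high-frequency factor in $L^4_{t,x}$ and the low-frequency factor in $L^\infty_{t,x}$, the latter controlled by spatial Bernstein and the $L^\infty_t L^2_x$ heat-flow bound just mentioned. In the ``high--high'' regime $j_1 \sim j_2 \gtrsim k$ the output sits at the low frequency $2^k$, so I would instead use Bernstein $\lVert P_k g \rVert_{L^4_x} \lesssim 2^k \lVert P_k g \rVert_{L^{4/3}_x}$ and then H\"older $\lVert (P_{j_1}\bar{\psi}_x)(P_{j_2}\psi_s) \rVert_{L^{4/3}_x} \leq \lVert P_{j_1}\psi_x \rVert_{L^2_x} \lVert P_{j_2}\psi_s \rVert_{L^4_x}$, followed by the $L^\infty_t L^2_x$ and $L^4_{t,x}$ heat-flow bounds on the two factors. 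In every regime the product of the two heat-variable decay factors $(1 + s'2^{2j_1})^{-4}$ and $(1 + s'2^{2j_2})^{-4}$ is more than enough to make the $s'$-integral converge and reproduce a single factor $(1 + s 2^{2k})^{-4}$; the sums over the low frequency (in the high--low regimes) and over the high frequencies (in the high--high regime) close by the frequency-envelope summation rules, since after accounting for the $2^k/2^j$, respectively $2^j/2^k$, imbalance what remains is strictly geometric and the slowly-varying exponent $\delta$ is negligible. This yields a bound $\lesssim \upsilon_k^2 (1 + s2^{2k})^{-4}$, and the surplus factor of $\upsilon_k$ is absorbed using $\upsilon_k \lesssim \bigl(\sum_j \upsilon_j^2\bigr)^{1/2} \lesssim (\varepsilon^2 + E_0^2)^{1/2} \lesssim 1$, a consequence of \eqref{ScatteringSize} and conservation of energy; this is the point at which \eqref{ScatteringSize} enters.

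The one genuinely delicate point is the passage from the $L^2_{t,x}$-type control that the bilinear theory supplies most naturally to the $L^4_{t,x}$ norm demanded here: a Bernstein factor must be spent and then recovered from the frequency imbalance, which forces a careful choice of which norm component of each factor to place where (the $L^\infty_t L^2_x$ and $L^4_{t,x}$ pieces suffice, but one must check the gains strictly dominate both the Bernstein loss and the envelope exponent $\delta$, uniformly in $s$ and in $\cK$). Modulo this bookkeeping the estimate is routine given Theorem \ref{thm:HF0}, \eqref{DoL4}, and Lemma \ref{lem:L2bounds}; an equivalent organization would bound $\lVert (P_{j_1}\bar{\psi}_x)(P_{j_2}\psi_s) \rVert_{L^2_{t,x}}$ directly by the $\psi_s$-analogue of \eqref{bls} and then interpolate up to $L^4_{t,x}$, which is essentially the same argument.
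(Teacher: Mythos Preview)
Your outline shares its starting point with the paper: both begin from $P_k A_x(s) = -\int_s^\infty P_k\Im(\bar\psi_x\,\psi_s)\,ds'$ and run a Littlewood--Paley trichotomy on the integrand. The implementations differ, though, and one point in yours is not quite closed.

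The paper does \emph{not} frequency-localize $\psi_s$. It writes $\psi_s = \partial_j\psi_j + iA_j\psi_j$, places exactly one frequency-localized copy of $\psi_x$ in $L^4_{t,x}$ via \eqref{DoL4}, and controls the remaining factor entirely through the unlocalized weighted caloric-gauge bounds \eqref{A-energy}, \eqref{psi-energy} (quantities such as $\lVert\nabla\psi_x\rVert_{L^2_x}$, $\lVert\psi_x\rVert_{L^\infty_x}$, $\lVert A_x\rVert_{L^2_x}$, $\lVert A_x\rVert_{L^\infty_x}$, $\lVert P_{>k-5}A_x\rVert_{L^4_x}$, each carrying an $s$-weight). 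The $s'$-integration is then immediate from those weighted estimates, and the paper separately treats $s<2^{-2k}$ and $s\geq 2^{-2k}$ to organize the decay. This keeps the lemma logically prior to \eqref{HF1}--\eqref{HF3} and to Theorem~\ref{thm:HF0}.

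Your route instead decomposes both factors and takes as input a frequency-localized bound $\lVert P_j\psi_s\rVert_{L^4_{t,x}}\lesssim 2^j\upsilon_j(1+s2^{2j})^{-4}$. For the main piece $\partial_j\psi_j$ this is \eqref{DoL4}, but for the $iA_j\psi_j$ piece a paraproduct in the regime where $A_x$ carries the output frequency asks for precisely $\lVert P_jA_x\rVert_{L^4_{t,x}}$, which is the quantity you are proving. This is not fatal---you can treat the $A_j\psi_j$ contribution using the energy bounds \eqref{A-energy}, \eqref{psi-energy} as the paper does, or set up a bootstrap---but as written the claimed $\psi_s$ bound begs the question. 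Relatedly, you invoke Theorem~\ref{thm:HF0} as an input, but the present lemma feeds into the proofs of \eqref{HF2} and \eqref{HF3} (see the explicit call to \eqref{PkAL4} there), and hence into Theorem~\ref{thm:HF0}; so citing the full theorem here is circular. What your argument genuinely needs is only \eqref{DoL4} and \eqref{HF1}, whose proofs are independent of this lemma.
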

\begin{proof}
If $s < 2^{-2k}$, then
\[
\begin{split}
\lVert P_k A_x(s) \rVert_{L^4_{t,x}}
\lesssim &\;
\sum_{j \geq k + 5} \int_s^\infty 2^k \lVert P_j \psi_x \rVert_{L^4_{t,x}} 
\left( \lVert \nabla \psi_x \rVert_{L^2_x} + \lVert \psi_x \rVert_{L^\infty_x} \lVert A_x \rVert_{L^2_x} \right) ds^\prime
\\
& 
+\int_s^\infty \lVert P_{k - 5 \leq \cdot \leq k + 5} \psi_x \rVert_{L^4_{t,x}} \lVert \psi_x \rVert_{L^\infty_x}
\left( 2^k + \lVert A_x \rVert_{L^\infty_x} \right) ds^\prime
\\
& 
+\sum_{j \leq k } \int_s^\infty 2^{\frac{j}{2}} \lVert P_j \psi_x \rVert_{L^4_{t,x}} \lVert \psi_x \rVert_{L^\infty_x}
\lVert P_{> k-5} A_x \rVert_{L^4_x} ds^\prime
\end{split}
\]
and the right hand side is bounded by $\upsilon_k$.

On the other hand, if $s \geq 2^{-2k}$, then
\[
\begin{split}
\lVert P_k A_x(s) \rVert_{L^4_{t,x}}
\lesssim &\;
\sum_{j \geq k - 5} \int_s^\infty \lVert P_j \psi_x \rVert_{L^4_{t,x}} 
\left( \lVert \nabla \psi_x \rVert_{L^\infty_x} + \lVert \psi_x \rVert_{L^\infty_x} \lVert A_x \rVert_{L^\infty_x} \right) ds^\prime
\\
& 
\sum_{j \leq k - 5} \int_s^\infty 2^{\frac{j}{2}} \lVert P_j \psi_x \rVert_{L^4_{t,x}}
\left( \lVert \nabla P_{> k - 5} \psi_x \rVert_{L^4_x} + \lVert \psi_x \rVert_{L^\infty_x} \lVert P_{>k-5} A_x \rVert_{L^4_x} \right) ds^\prime
\end{split}
\]
and the right hand side is bounded by $\upsilon_k(1 + s 2^{2k})^{-4}$.
\end{proof}

\begin{lem}
It holds that
\[
\lVert P_k \psi_x(s) \rVert_{L^\infty_t L^2_x}
\lesssim
\upsilon_k (1 + s 2^{2k})^{-4}
\]
\end{lem}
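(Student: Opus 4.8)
The plan is to estimate $\lVert P_k \psi_x(s) \rVert_{L^\infty_t L^2_x}$ by splitting according to whether the heat time $s$ is small ($s < 2^{-2k}$) or large ($s \geq 2^{-2k}$), as is done throughout this section. In both regimes I would work from the Duhamel representation \eqref{Duhamel}, $\psi_m(s) = e^{s\Delta}\psi_m(0) + \int_0^s e^{(s-s')\Delta} U_m(s')\,ds'$. The linear term is handled immediately: $e^{s\Delta}$ commutes with $P_k$ and with the $L^\infty_t L^2_x$ norm in the $x$ variable is a Fourier multiplier of size $e^{-cs2^{2k}}$ on the frequency shell $|\xi|\sim 2^k$, so $\lVert e^{s\Delta} P_k\psi_x(0)\rVert_{L^\infty_t L^2_x} \lesssim e^{-cs2^{2k}} \lVert P_k\psi_x(0)\rVert_{L^2_x} \lesssim \beta_k (1+s2^{2k})^{-4} \lesssim \upsilon_k(1+s2^{2k})^{-4}$ (here the time-independence of $\psi_x(0)$ trivializes the $L^\infty_t$ part). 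So the entire content is the Duhamel integral term $\int_0^s e^{(s-s')\Delta} P_k U_m(s')\,ds'$.

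For the inhomogeneous term I would expand $U_m$ as in \eqref{NLSHN}, namely $U_m = 2iA_j\partial_j\psi_m + i(\partial_j A_j)\psi_m - A_x^2\psi_m + i\mu\psi_j\Im(\bar\psi_j\psi_m)$, and Littlewood-Paley decompose each factor. The key inputs are the heat-flow decay already established: \eqref{DoL4} for $P_k\psi_x(s)$ in $L^4_{t,x}$, \eqref{PkAL4} for $P_k A_x(s)$ in $L^4_{t,x}$, the pointwise-in-$s$ energy bounds \eqref{A-energy} and \eqref{psi-energy} for $\psi_x$ and $A_x$ (giving $L^\infty_x$ and $L^2_x$ control with the sharp powers of $s$), and the bilinear estimate \eqref{bls} when two $\psi_x$ (or $\bar\psi_x$) factors carry heat times. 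The mechanism is the standard one: in the regime $s < 2^{-2k}$, for a high-frequency output produced by a high-high or high-low interaction one uses the smoothing $\lVert P_k e^{(s-s')\Delta} g\rVert_{L^2_x} \lesssim \min(1, (2^k(s-s')^{1/2}))$-type bounds together with a gain of $2^k$ from the frequency projection; in the regime $s \geq 2^{-2k}$ one instead extracts the decay $(1+s2^{2k})^{-4}$ directly from the heat semigroup acting over the interval $[s/2, s]$ and uses \eqref{DoL4}, \eqref{PkAL4} on the remaining half. Holder in time and space (pairing an $L^4_{t,x}$ factor from \eqref{DoL4}/\eqref{PkAL4} against an $L^4_{t,x}$ factor and controlling $A_x$ or $\psi_x$ in $L^\infty_x$ via \eqref{A-energy}/\eqref{psi-energy}, or using \eqref{bls} to get an $L^2_{t,x}$ product), followed by the frequency-envelope summation rules, reproduces the claimed bound $\upsilon_k(1+s2^{2k})^{-4}$. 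One subtle point is that the $A_j\partial_j\psi_m$ term has a derivative on $\psi_m$; when this derivative lands on a low frequency it is absorbed by the frequency of the output, and when it lands on a high frequency one uses that $A_x$ contributes the smallness and decay — this is exactly the structure already exploited in the proof of \eqref{PkAL4}, so the same case division carries over.

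The main obstacle I anticipate is bookkeeping the output-frequency versus heat-time relationship uniformly across the several trilinear and bilinear pieces of $U_m$, in particular making sure the $(1+s2^{2k})^{-4}$ decay is not lost in the high-high-to-low interactions where the output frequency $2^k$ is much smaller than the frequencies of the inputs. In that case one cannot simply borrow decay from $e^{s\Delta}$ at frequency $2^k$; instead one must use the decay already present in the high-frequency factors themselves via \eqref{bls} (which supplies $(1+s2^{2j})^{-4}(1+\tilde s 2^{2k})^{-4}$ with $j \gg k$) and then convert an $(1+s2^{2j})^{-4}$ with $j \geq k$ into an $(1+s2^{2k})^{-4}$ after integrating $ds'$ over $[0,s]$ — this requires that the $s'$-integral converges and produces the right power, which it does because of the extra powers of $2^{-|j-k|/2}$ in \eqref{bls} and the summation rules for frequency envelopes. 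A secondary technical nuisance is the restriction to a general time interval $I = [t_0, t_1]$ rather than $I = \R$; as with Theorem \ref{thm:HF0}, I would first prove the estimate on $\R$ (where the linear estimate and translation-invariance of the $X_k$ norms are cleanest) and then recover the general interval at the end, exactly as indicated in the proof of Theorem \ref{thm:HF0} — but here, since we only want the $L^\infty_t L^2_x$ component and not the full $X_k$ norm, the passage to a general interval is essentially immediate by restriction.
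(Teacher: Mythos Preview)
Your plan has a genuine mismatch between the target norm and the tools you propose to use. The quantity to control is $\lVert P_k \psi_x(s)\rVert_{L^\infty_t L^2_x}$, a \emph{sup-in-Schr\"odinger-time} bound. But the inputs you list---\eqref{DoL4}, \eqref{PkAL4}, and the bilinear estimate \eqref{bls}---are all \emph{integrated} in $t$ ($L^4_{t,x}$ or $L^2_{t,x}$). Pairing two $L^4_{t,x}$ factors by H\"older yields $L^2_{t,x}$, not $L^\infty_t L^2_x$; likewise \eqref{bls} only outputs $L^2_{t,x}$. There is no mechanism in your sketch for turning these $t$-integrated bounds back into a pointwise-in-$t$ estimate on the Duhamel integral, so the argument as written cannot close.

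The paper's proof avoids this entirely by working at each fixed Schr\"odinger time $t$, never invoking \eqref{DoL4}, \eqref{PkAL4}, or \eqref{bls}. Instead it uses only the energy-type bounds \eqref{A-energy} and \eqref{psi-energy}, which hold uniformly in $t$, and places the heat-time integrals in $L^1_s$ or $L^2_s$ while keeping $\psi_x$ in $L^\infty_s L^2_x$. The Littlewood--Paley trichotomy on $P_k U_x$ produces a bound of the schematic form $(K_1+K_2)\sup_j c_j \lVert P_j\psi_x\rVert_{L^\infty_s L^2_x}$, where $K_1+K_2 \lesssim_{E_0} 1$ are built from the $L^p_s L^q_x$ energy norms of $A_x$, $\psi_x$. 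Since $K_1+K_2$ need not be small, the paper partitions the heat-time ray $[0,\infty)$ into finitely many subintervals on each of which $K_1+K_2 \le \epsilon$, and then iterates to obtain $\lVert P_k\psi_x(s)\rVert_{L^2_x}\lesssim \upsilon_k$ without decay. The $(1+s2^{2k})^{-4}$ decay is then obtained by a separate bootstrap: split $[0,s]$ at $(1-\delta)s$, use the heat semigroup decay $e^{-(s-s')2^{2k}} \lesssim (1+s2^{2k})^{-N}$ on the first piece, and on the short piece $[(1-\delta)s,s]$ use the weighted $L^\infty_s$ forms of \eqref{A-energy}, \eqref{psi-energy} with $\delta=\delta(E_0)$ small. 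This self-referential iteration in the heat variable is the key idea you are missing.
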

\begin{proof}
Using representation \eqref{Duhamel}, we have
by translation invariance that
\[
\lVert e^{s \Delta} P_k \psi_x(0) \rVert_{L^\infty_t L^2_x} 
\lesssim 
\upsilon_k (1 + s 2^{2k})^{-4}
\]
for the linear term.
Next we bound the nonlinear Duhamel term in $L^2_x$.
Using the Littlewood-Paley trichotomy we obtain
\[
\begin{split}
&\lVert P_k \int_0^s e^{(s - s^\prime) \Delta} U_x(s^\prime) ds^\prime \rVert_{L^2_x}
\lesssim
\\
&\quad
K_1 \| P_{k - 5 \leq \cdot \leq k + 5} \psi_{x} \|_{L_{s}^{\infty} L_{x}^{2}}
+ K_2 
\left[
\sum_{j \leq k} 2^{-|j - k|} \| P_{j} \psi_{x} \|_{L_{s}^{\infty} L_{x}^{2}}  +
\sum_{j \geq k + 5} 2^{- |k - j| } \| P_{j} \psi_{x} \|_{L_{s}^{\infty} L_{x}^{2}} 
\right]
\end{split}
\]
where here
\[
K_1 :=
\| \nabla \cdot A \|_{L_{s}^{1} L_{x}^{\infty}} 
+ 2^{k} \left(\int_{0}^{\infty} e^{-s 2^{2k}} ds\right)^{\frac12}
\| A \|_{L_{s}^{2} L_{x}^{\infty}} + \| A_{x} \|_{L_{s}^{2} L_{x}^{\infty}}^{2} 
+ \| \psi_{x} \|_{L_{s}^{2} L_{x}^{\infty}}^{2}
\]
and
\[
K_2 :=
\| \Delta A \|_{L_{s}^{1} L_{x}^{2}} +
2^k \left(\int_{0}^{\infty} e^{-s 2^{2k}} ds\right)^{\frac12}  \| \nabla A \|_{L_{s,x}^{2}} +
\| \nabla (A_{x} + \psi_{x}) \|_{L_{s,x}^{2}} \| A_{x} + \psi_{x} \|_{L_{s}^{2} L_{x}^{\infty}}
\]
In view of the estimates \eqref{A-energy} and \eqref{psi-energy}, we have $K_1 + K_2 \lesssim_{E_0} 1$, 
and therefore
by partitioning $[0, \infty)$ into finitely many pieces we may arrange 
$K_1 + K_2 \leq \epsilon \ll 1$ on each piece. Iterating then yields
$\lVert P_k \psi_x(s) \rVert_{L^2_x} \lesssim \upsilon_k$.

To obtain decay in $s$, we make the bootstrap assumption
$\lVert P_k \psi_x(s) \rVert_{L^2_x} \leq C \upsilon_k (1 + s 2^{2k})^{-4}$.
For $\delta > 0$, $N \in \Z_{\geq 0}$, it holds that
\[
e^{-(s - s^\prime)2^{2k}} \lesssim_{\delta, N} (1 + s 2^{2k})^{-N},
\quad \quad
s^\prime < (1 - \delta) s
\]
and so the integral over $[0, (1 - \delta)s]$ is controlled as follows:
\[
\lVert P_k \int_0^{(1 - \delta) s} e^{(s - s^\prime)\Delta} U_x(s^\prime) ds^\prime \rVert_{L^2_x}
\lesssim
\upsilon_k (1 + s 2^{2k})^{-4}
\]
Over $[(1 - \delta)s, s]$, we have from the weighted estimates \eqref{A-energy}, \eqref{psi-energy} that
\[
\| s \nabla \cdot A \|_{L^{\infty}_{s,x}} 
+ 2^{k} \left(\int_{0}^{\infty} e^{-s 2^{2k}} ds\right)^{\frac12}
\| s^{\frac12} A \|_{L^{\infty}_{s,x}} + \| s^\frac12 A_{x} \|_{L^\infty_{s,x}}^2
+ \| s^{\frac12} \psi_{x} \|_{L^\infty_{s,x}}^{2}
\lesssim_{E_0} 1
\]
and
\[
\begin{split}
\| s \Delta A \|_{L^\infty_s L^2_x} &+
2^k \left(\int_{0}^{\infty} e^{-s 2^{2k}} ds\right)^{\frac12}  \| s^\frac12 \nabla A \|_{L^\infty_s L^2_x} 
\\
&+
\| s^\frac12 \nabla (A_{x} + \psi_{x}) \|_{L^\infty_s L^2_x} + \| s^\frac12 (A_{x} + \psi_{x}) \|_{L^\infty_{s,x}}
\lesssim_{E_0} 1
\end{split}
\]
Choosing $\delta = \delta(E_0) > 0$ sufficiently small closes the argument.
\end{proof}

\begin{lem}
It holds that
\[
\lVert P_k \left[ \psi_x(s) - e^{s \Delta} \psi_x(0) \right] \rVert_{L^2_{t,x}} 
\lesssim
\upsilon_k (s^{-\frac12} + 2^k)^{-1} (1 + s 2^{2k})^{-4}
\]
\end{lem}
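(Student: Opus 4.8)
The plan is to use the Duhamel representation \eqref{Duhamel}, which immediately removes the linear piece $e^{s\Delta}\psi_x(0)$ and reduces the claim to an $L^2_{t,x}$ bound on the nonlinear term $\int_0^s e^{(s-s')\Delta} P_k U_m(s')\,ds'$. Since the heat propagator $e^{(s-s')\Delta}$ acting on frequencies $\sim 2^k$ contributes a factor $e^{-(s-s')2^{2k}}$, the task is to estimate $\int_0^s e^{-(s-s')2^{2k}} \lVert P_k U_m(s')\rVert_{L^2_{t,x}}\,ds'$ and show it is dominated by $\upsilon_k (s^{-1/2}+2^k)^{-1}(1+s2^{2k})^{-4}$. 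Here $U_m = 2iA_j\partial_j\psi_m + i(\partial_j A_j)\psi_m - A_x^2\psi_m + i\mu\psi_j\Im(\bar\psi_j\psi_m)$ from \eqref{NLSHN}.

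The core input is the bilinear estimate along the heat flow \eqref{bls} together with its $L^2$-based corollaries in Lemma \ref{lem:L2bounds}, the map-level $L^4$ propagation \eqref{DoL4}, and the companion bound \eqref{PkAL4} for $P_k A_x(s)$; we also have the $L^\infty_t L^2_x$ bound just proved and the weighted heat-flow bounds \eqref{A-energy}, \eqref{psi-energy}. For each schematic term in $U_m$ we perform a Littlewood-Paley trichotomy: the high-high interactions producing frequency $\sim 2^k$ are handled by \eqref{bls} (with the appropriate heat times $s'$ inserted), which gains the crucial $(1+s'2^{2j})^{-4}$ decay at the high frequency $2^j \gtrsim 2^k$; the high-low and low-high pieces are controlled by pairing one factor in $L^4_{t,x}$ (via \eqref{DoL4}, \eqref{PkAL4}) against the other in $L^4_{t,x}$, or one factor in $L^2_{t,x}$ against the other in $L^\infty$ using the weighted $s$-decay of \eqref{psi-energy}. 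The key bookkeeping point is that $\lVert P_k U_m(s')\rVert_{L^2_{t,x}}$ carries, after summation, a factor like $\upsilon_k\, \varepsilon^2\, (s'^{-1/2}+2^k)(1+s'2^{2k})^{-4}$: the extra power $(s'^{-1/2}+2^k)$ reflects the derivative $\partial_j$ falling on $\psi_m$ in the worst term $A_j\partial_j\psi_m$ (for $s' \lesssim 2^{-2k}$ this is the genuine derivative loss $s'^{-1/2}$, for $s' \gtrsim 2^{-2k}$ it is $2^k$).

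Granting the bound $\lVert P_k U_m(s')\rVert_{L^2_{t,x}} \lesssim \varepsilon^2 \upsilon_k (s'^{-1/2}+2^k)(1+s'2^{2k})^{-4}$, the result follows by integrating against $e^{-(s-s')2^{2k}}$. I would split $[0,s]$ at $s/2$. On $[s/2,s]$ one uses $(1+s'2^{2k})^{-4}\sim (1+s2^{2k})^{-4}$ and $\int_{s/2}^s e^{-(s-s')2^{2k}}(s'^{-1/2}+2^k)\,ds' \lesssim (s^{-1/2}+2^k)^{-1}$ — essentially because $\int_0^\infty e^{-r2^{2k}}dr = 2^{-2k}$ and multiplying by the Lipschitz-in-$s'$ weight produces $(s^{-1/2}+2^k)\cdot 2^{-2k} = (s^{-1/2}+2^k)^{-1}\cdot(s^{-1}2^{-2k}+1)^{-1}\cdots$; more cleanly, on this interval $s'^{-1/2}+2^k \sim s^{-1/2}+2^k$, so the integral is $(s^{-1/2}+2^k)\int e^{-(s-s')2^{2k}}ds' \lesssim (s^{-1/2}+2^k)\min(s,2^{-2k}) = (s^{-1/2}+2^k)^{-1}$. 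On $[0,s/2]$ the exponential is $\lesssim_N (1+s2^{2k})^{-N}$, which buys the required decay with room to spare, and the $ds'$-integral of $(s'^{-1/2}+2^k)(1+s'2^{2k})^{-4}$ over $[0,s/2]$ converges and is bounded by $(s^{-1/2}+2^k)^{-1}$ up to the extra decay. The main obstacle is the first step: establishing the stated bound on $\lVert P_k U_m(s')\rVert_{L^2_{t,x}}$ uniformly in $s'$, which requires carefully tracking, in the trichotomy for $A_j\partial_j\psi_m$, both the heat-time decay for high-high interactions (so one must recall that $A_x$ itself obeys a bilinear-type bound coming from integrating \eqref{bls} in $s$ via \eqref{A}, giving \eqref{PkAL4}) and the derivative cost; once this is in hand the $s'$-integration is routine.
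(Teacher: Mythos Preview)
Your approach is essentially the paper's: reduce via Duhamel to the nonlinear integral, treat the pieces of $U_m$ term by term with a Littlewood--Paley trichotomy using the propagated $L^4_{t,x}$ bounds \eqref{DoL4}, \eqref{PkAL4} and the energy bounds \eqref{A-energy}, \eqref{psi-energy}, and split the $s'$-integral at $s/2$ to extract the $(1+s2^{2k})^{-4}$ decay. Two differences are worth noting.

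First, you route everything through a pointwise-in-$s'$ bound $\lVert P_k U_m(s')\rVert_{L^2_{t,x}} \lesssim \upsilon_k(s'^{-1/2}+2^k)(1+s'2^{2k})^{-4}$, then integrate. The paper instead bounds the full $s'$-integral directly, placing certain factors in mixed $L^p_{s'}L^q_{t,x}$ norms (e.g.\ $\lVert \nabla\!\cdot\! A_x\rVert_{L^2_{s'}L^4_{t,x}}$), which is a bit more flexible. Your pointwise bound is in fact attainable, so this is purely organizational.

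Second, you cite the bilinear estimate \eqref{bls} as the ``core input'' for the high-high interactions. The paper's proof of this lemma does not use \eqref{bls} at all; it uses only $L^4_{t,x}$ products together with a Bernstein gain at the low output frequency. In the high-high piece of $A_j\partial_j\psi_m$ (equivalently $2^k P_k(A\psi)$), a naive $L^4_{t,x}\times L^4_{t,x}$ pairing yields only $\varepsilon^2$ and loses the envelope $\upsilon_k$. What the paper does is put $P_j A_x$ in $L^4_t L^{4/3}_x$, Bernstein the product from $L^1_x$ up to $L^2_x$ (gaining $2^k$), and exploit the structure \eqref{A} to get $\lVert P_j A_x\rVert_{L^4_t L^{4/3}_x}\lesssim 2^{-j}\upsilon_j(1+s'2^{2j})^{-N}$, which makes the $j>k$ sum converge with the correct $\upsilon_k$. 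Your route via \eqref{bls} would require expanding $A_x$ and is more circuitous; the direct $L^{4/3}_x$ trick is what you want here.
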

\begin{proof}
To prove the estimate, we need to bound the nonlinear Duhamel term of $\psi_x$ in $L^2_{t,x}$.
We proceed term by term.

First, we have
\[
\begin{split}
\lVert 2^k \int_0^s e^{(s - s^\prime) \Delta} P_k (A \psi_x) ds^\prime \rVert_{L^2_{t,x}}
\lesssim &\;
2^k \min(s, 2^{-2k}) \| P_{k - 5 \leq \cdot \leq k + 5} \psi_{x} \|_{L_{t,x}^{4}} \| A_{x} \|_{L_{t,x}^{4}} 
\\ &
+ 2^{k} \min(s, 2^{-2k}) \sum_{j \leq k} 2^{\frac{j}{2}} \| P_{k} A_{x} \|_{L_{t}^{4} L_{x}^{2}} \| P_{j} \psi_{x} \|_{L_{t,x}^{4}} 
\\ &
+ 2^{2k} \min(s, 2^{-2k}) \sum_{j > k + 5} 2^{-j} \| P_{j} A_{x} \|_{L_{t}^{4} L_{x}^{\frac43}} \| P_{j} \psi_{x} \|_{L_{t,x}^{4}} 
\end{split}
\]
The right hand side is controlled by $\min(s^{\frac12}, 2^{-k}) \upsilon_k (1 + s 2^{2k})^{-4}$.

\begin{rem}
Once again the $(1 + s 2^{2k})^{-4}$ gain comes from the decay of $e^{(s - s') \Delta}$ when $s' < \frac{s}{2}$ 
and from
$\| P_{k} \psi_{x} \|_{L_{t,x}^{4}} \lesssim (1 + s 2^{2k})^{-4}$ and the decay of $P_{k} A_{x}$
for larger $s^\prime$.
\end{rem}

Next, we have
\[
\begin{split}
\lVert \int_0^s e^{(s - s^\prime)\Delta} P_k( (\nabla \cdot A) \psi_x) ds^\prime \rVert_{L^2_{t,x}}
\lesssim &\;
\min(2^{-k}, s^{\frac12}) \| P_{k} \psi_{x} \|_{L_{t,x}^{4}} \| \nabla \cdot A_{x} \|_{L_{s}^{2} L_{t,x}^{4}}
\\ &
+ \min(s^{\frac12}, 2^{-k}) \| P_{k}(\nabla \cdot A) \|_{L_{s}^{2} L_{t}^{4} L_{x}^{2}} \sum_{j \leq k} 2^{\frac{j}{2}} \| P_{j} \psi_{x} \|_{L_{t,x}^{4}}
\\ &
+ \sum_{j > k + 5} 2^{\frac{k}{2}} \| P_{j} (\nabla \cdot A) \|_{L_{s}^{2} L_{t}^{4} L_{x}^{2}} \| P_{j} \psi_{x} \|_{L^2_s L_{t,x}^{4}}
\end{split}
\]
and again we obtain the desired control on the right hand side.

Finally, we have
\[
\begin{split}
&\| \int_0^s e^{(s - s') \Delta} P_{k} ((A_{x}^{2} + \psi_{x}^{2}) \psi_{x}) ds' \|_{L_{t,x}^{2}}
\\
&\quad \quad \quad \quad \lesssim
\| P_{k} \psi_{x} \|_{L_{t,x}^{4}} \| (A_{x} + \psi_{x}) \|_{L_{t,x}^{4}} \| (A_{x} + \psi_{x}) \|_{L_{s}^{2} L_{x}^{\infty}}
\\ & \quad \quad \quad \quad \quad
+ \sum_{j > k + 5} 2^{k} \| P_{j} \psi_{x} \|_{L_{t,x}^{4}} \| P_{j} (A_{x} + \psi_{x}) \|_{L_{s,x}^{2}} \| A_{x} + \psi_{x} \|_{L_{t,x}^{4}}
\\ & \quad \quad \quad \quad \quad
+ \sum_{j \leq k} 2^{\frac{j}{2}} \| P_{j} \psi_{x} \|_{L_{t,x}^{4}} \| P_{> k} (A_{x} + \psi_{x}) \|_{L_{s}^{2} L_{x}^{4}} \| A_{x} + \psi_{x} \|_{L_{t,x}^{4}}
\end{split}
\]
and the desired bound follows.
\end{proof}

\begin{lem}
It holds that
\[
\lVert (\partial_t - i \Delta) P_k \psi_x(s) \rVert_{L^2_{t,x}}
\lesssim
\upsilon_k (s^{-\frac12} + 2^k) (1 + s 2^{2k})^{-4}
\]
\end{lem}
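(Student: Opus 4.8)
The plan is to exploit the fact that the covariant Schr\"odinger equation propagates along the harmonic map heat flow: for every $s$ one has $(i\partial_t + \Delta)\psi_m(s) = \cN_m(s)$, where $\cN_m$ is as in \eqref{NLSHN} with $\psi_x, A_x, A_t$ evaluated at heat-time $s$, and where $\psi_t(s) := i D_j\psi_j(s) = i\psi_s(s)$ and $A_t(s)$ is given by \eqref{A}. Hence $(\partial_t - i\Delta)P_k\psi_x(s) = -i P_k\cN_x(s)$, and it suffices to show $\lVert P_k\cN_x(s)\rVert_{L^2_{t,x}} \lesssim \upsilon_k(s^{-1/2} + 2^k)(1 + s2^{2k})^{-4}$; we may assume $s > 0$, since the claimed bound is vacuous at $s = 0$. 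Two preliminary inputs: from \eqref{DoL4}, \eqref{PkAL4}, the embedding $\ell^2 L^4 \hookrightarrow L^4$, and square-summability of $\upsilon_k$, we get $\lVert\psi_x(s)\rVert_{L^4_{t,x}} + \lVert A_x(s)\rVert_{L^4_{t,x}} \lesssim 1$; and from \eqref{A-energy} and \eqref{psi-energy} we get the weighted bounds $\lVert A_x(s)\rVert_{L^\infty_{t,x}} + \lVert\psi_x(s)\rVert_{L^\infty_{t,x}} \lesssim s^{-1/2}$ and $\lVert\partial_x\psi_x(s)\rVert_{L^\infty_{t,x}} \lesssim s^{-1}$.

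I would then estimate $P_k\cN_x(s)$ term by term with the Littlewood--Paley trichotomy, along the lines of the two preceding lemmas. The factor $s^{-1/2} + 2^k$ is produced by exactly two mechanisms. First, in $A_j\partial_j\psi_m$, when the derivative lands on the frequency-$\sim 2^k$ factor $P_k\psi_x$ the gradient contributes $2^k$, and the low--high interaction closes by $2^k\lVert A_x(s)\rVert_{L^4_{t,x}}\lVert P_k\psi_x(s)\rVert_{L^4_{t,x}} \lesssim 2^k\upsilon_k(1 + s2^{2k})^{-4}$. Second, in every remaining interaction -- the high--high part of $A_j\partial_j\psi_m$, and all of $(\partial_jA_j)\psi_m$, $(A_x^2 + \psi_x^2)\psi_m$, and $A_t\psi_m$ -- one expands each connection coefficient using \eqref{A}, reducing to heat-time-integrated cubic (for the $A_x^2$ piece, quartic) expressions in $\psi_x$ and $\psi_s$; one factor is placed in a weighted $L^\infty_{t,x}$ norm, which supplies $s^{-1/2}$ (or $s^{-1}$), and the remaining bilinear factor is estimated by \eqref{bls} (using $\psi_s = \partial_j\psi_j + iA_j\psi_j$ so that \eqref{bls} applies to $\partial_j\psi_j$, the $A_j\psi_j$ contribution folding into the cubic terms). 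The two heat-time decay factors of \eqref{bls} render all heat-time integrals convergent and preserve a factor $(1 + s2^{2k})^{-4}$, and the frequency-envelope summation rules reconstitute $\upsilon_k$.

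The main obstacle is this second group of terms -- in particular $A_t\psi_m$, where a crude H\"older bound is not affordable and \eqref{bls} is indispensable. For $A_t\psi_m$ one has the closed form $A_t(s) = \int_s^\infty |\psi_s(s')|^2\,ds'$, coming from \eqref{A}, $\psi_t = i D_j\psi_j$, and $D_j\psi_j = \psi_s$; writing $\psi_s = \partial_j\psi_j + iA_j\psi_j$, the leading contribution is $\bigl(\int_s^\infty |\partial_j\psi_j(s')|^2\,ds'\bigr)\psi_m(s)$, whose $L^2_{t,x}$ norm is bounded by $\int_s^\infty \lVert\partial_j\psi_j(s')\rVert_{L^\infty_{t,x}}\sum_{j'}\lVert(P_{j'}\partial_l\psi_l(s'))(P_k\psi_x(s))\rVert_{L^2_{t,x}}\,ds'$. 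Using $\lVert\partial_j\psi_j(s')\rVert_{L^\infty_{t,x}} \lesssim (s')^{-1}$ and bounding the bilinear factor by $2^{j'}$ times \eqref{bls}, the two decay factors handle convergence at $s' = \infty$ and keep the $(1 + s2^{2k})^{-4}$ gain, while the $(s')^{-1}$ weight against the remaining frequency sum, integrated in $s'$, yields a factor $\lesssim s^{-1/2} + 2^k$ after balancing the dominant frequency against $\min(2^k, (s')^{-1/2})$ in the regime $s' \lesssim 2^{-2k}$. Summing over all terms and frequencies, and treating the regimes $s \lessgtr 2^{-2k}$ separately as in the proofs of the two preceding lemmas, completes the bound; the reduction from $I = \R$ to a finite interval is then carried out exactly as for the other lemmas in this section.
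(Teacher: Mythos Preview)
Your proof rests on the claim that the covariant Schr\"odinger equation $(i\partial_t+\Delta)\psi_m(s)=\cN_m(s)$ holds at \emph{every} heat-time $s$, with $\psi_t(s)=iD_j\psi_j(s)=i\psi_s(s)$. This is false for $s>0$. The identity $\psi_t=iD_j\psi_j$ is the gauge-level form of the Schr\"odinger map equation $\partial_t\phi=\phi\times_\mu\Delta\phi$; it holds only at $s=0$, because the harmonic map heat flow does not preserve the Schr\"odinger map property (the two flows do not commute). One can also see this directly: both $\psi_t$ and $\psi_s$ obey covariant heat equations $(\partial_s-\Delta)\psi_\alpha=U_\alpha$, but the nonlinearity $U_\alpha$ contains the term $i\mu\psi_j\Im(\bar\psi_j\psi_\alpha)$, which is only $\R$-linear in $\psi_\alpha$, so $U_t\neq iU_s$ even when $\psi_t=i\psi_s$; hence the relation cannot persist. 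In particular your formula $A_t(s)=\int_s^\infty|\psi_s(s')|^2\,ds'$, which you derived from $\psi_t=i\psi_s$, is not valid.

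The paper's approach avoids this obstruction by using instead the torsion-free compatibility relation $D_t\psi_x=D_x\psi_t$, which \emph{is} valid at every $s$ since it is a property of the connection and not of the dynamics. Expanding this gives $\partial_t\psi_x=\partial_x\psi_t+iA_x\psi_t-iA_t\psi_x$, so that $(\partial_t-i\Delta)\psi_x=(\partial_x\psi_t-i\Delta\psi_x)+iA_x\psi_t-iA_t\psi_x$. The three terms are then bounded separately: $\psi_t(s)$ is controlled not by an algebraic identity but via its Duhamel expansion $\psi_t(s)=e^{s\Delta}\psi_t(0)+\int_0^s e^{(s-s')\Delta}U_t(s')\,ds'$, with the initial value $\psi_t(0)=iD_j\psi_j(0)$ supplied by the Schr\"odinger map equation at $s=0$. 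The crucial cancellation between $\partial_x e^{s\Delta}\psi_t(0)$ and $i\Delta e^{s\Delta}\psi_x(0)$ (again via torsion-freeness, $D_x\psi_\ell=D_\ell\psi_x$) is what produces the favorable $2^k\upsilon_k$ bound rather than $2^{2k}\upsilon_k$ for the linear part. Your outline does not have access to any of this machinery once the premise $\psi_t(s)=i\psi_s(s)$ is removed.
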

\begin{proof}

We take advantage of
the compatibility condition $D_t \psi_x = D_x \psi_t$, which upon expansion reads
\begin{equation} \label{txcompexp}
\partial_t \psi_x(s) = \partial_x \psi_t(s) + i A_x \psi_t(s) - i A_t \psi_x(s)
\end{equation}
The terms on the right hand side we then expand using the Duhamel representation \eqref{Duhamel}.
Our strategy will be to bound each of $A_t \psi_x$ and $A_x \psi_t$ directly,
followed by $\partial_x \psi_t - i \Delta \psi_x$.

\textbf{The term $A_t \psi_x$}

For $A_t \psi_x$, we have
\[
\begin{split}
\lVert P_k (A_t \psi_x) \rVert_{L^2_{t,x}}
\lesssim &
\sum_{j \geq k + 5} 2^{k} \| P_{j} A_{t} \|_{L_{t}^{4} L_{x}^{\frac43}} \| P_{j} \psi_{x} \|_{L_{t,x}^{4}}
+ \| P_{k - 5 \leq \cdot \leq k + 5} \psi_{x} \|_{L_{t,x}^{4}} \| A_{t} \|_{L_{t,x}^{4}}
\\ &\;
+ \sum_{j \leq k - 5} 2^{\frac{j}{2}} 2^{k/2} \| P_{j} \psi_{x} \|_{L_{t,x}^{4}} \| A_{t} \|_{L_{t}^{4} L_{x}^{\frac43}}
\end{split}
\]
As
\[
\| A_{t} \|_{L_{s}^{2} L_{t,x}^{4}} + \| s^{\frac12} A_{t} \|_{L_{t,x}^{4}} \lesssim \| \psi_{t} \|_{L_{s}^{2} L_{t,x}^{4}} (\| s \partial_{x} \psi_{x} \|_{L_{s,x}^{\infty}} + \| s^{\frac12} A_{x} \|_{L_{s,x}^{\infty}} \| s^{\frac12} \psi_{x} \|_{L_{s,x}^{\infty}}) \lesssim \varepsilon
\]
and
\[
\| A_{t} \|_{L_{t}^{4} L_{x}^{\frac43}} \lesssim \| \psi_{t} \|_{L_{s}^{2} L_{t,x}^{4}} (\| \partial_{x} \psi_{x} \|_{L_{s,x}^{2}} + \| A_{x} \|_{L_{s}^{2} L_{x}^{\infty}} \| \psi_{x} \|_{L_{s}^{\infty} L_{x}^{2}}) \lesssim \varepsilon
\]
thanks to \cite[Lem.~6.7, Cor.~6.8, Thm.~6.9]{Do12},
we conclude $\lVert P_k (A_t \psi_x) \rVert_{L^2_{t,x}} \lesssim (s^{-\frac12} + 2^k) \upsilon_k (1 + s 2^{2k})^{-4}$.

\textbf{The term $A_x \psi_t$}

Invoking \eqref{PkAL4}, we have
\[
\lVert P_{k - 5 \leq \cdot \leq k + 5} A_x \rVert_{L^4_{t,x}}
\lVert \psi_t \rVert_{L^4_{t,x}}
\lesssim
s^{-\frac12} \upsilon_k \varepsilon (1 + s 2^{2k})^{-4}
\]

For the remaining terms, we use the following Duhamel expansion of $\psi_t$:
\[
\psi_t(s) = i e^{s \Delta} \partial_\ell \psi_\ell(0) - e^{s \Delta}(A_\ell \psi_\ell)(0)
+ \int_0^s e^{(s - s^\prime) \Delta} U_t(s^\prime) ds^\prime
\]
For the first term from the expansion, we have
\[
\begin{split}
\| P_{k - 5 \leq \cdot \leq k + 5} i e^{s \Delta} \partial_{l} \psi_{l}(0) \|_{L_{t,x}^{4}} \| A_{x} \|_{L_{t,x}^{4}} 
&+ \sum_{j \geq k + 5} 2^{k} \| P_{j} i e^{s \Delta} \partial_{l} \psi_{l}(0) \|_{L_{t,x}^{4}} \| P_{j} A_{x} \|_{L_{t}^{4} L_{x}^{\frac43}} 
\\
&\lesssim s^{-\frac12} \upsilon_k (1 + s 2^{2k})^{-4}
\end{split}
\]
For the second, we use
\[
 \| e^{s \Delta} P_{j}(A_{l} \psi_{l})(0) \|_{L_{t}^{4} L_{x}^{2}} 
 \lesssim 2^{-\frac{j}{2}} s^{-\frac12} \| \psi_{x} \|_{L_{t,x}^{4}} \| A_{x} \|_{L_{x}^{2}} 
 \lesssim 2^{-\frac{j}{2}} s^{-\frac12} \varepsilon
\]
to conclude
\[
\begin{split}
\sum_{j \leq k} 2^{\frac{j}{2}} \| P_{j} A_{x} \|_{L_{t,x}^{4}} \| P_{> k - 5} e^{s \Delta} (A_{l} \psi_{l})(0) \|_{L_{t}^{4} L_{x}^{2}} 
&+ \sum_{j > k + 5} 2^{k/2} \| P_{j} A_{x} \|_{L_{t,x}^{4}} \| P_{j} e^{s \Delta} (A_{l} \psi_{l})(0) \|_{L_{t}^{4} L_{x}^{2}} 
\\
&\lesssim s^{-\frac12} \upsilon_k (1 + s 2^{2k})^{-4}
\end{split}
\]
This leaves the nonlinear Duhamel term.
We can control it in $L^4_t L^{\frac43}_x$ via
\[
 \| \int_{0}^{s} e^{(s - s') \Delta} U_{t}(s') ds' \|_{L_{t}^{4} L_{x}^{\frac43}} 
 \lesssim \| \psi_{t} \|_{L_{s}^{2} L_{t,x}^{4}} (\| A \|_{L_{s}^{\infty} L_{x}^{2}} + \| \nabla \cdot A \|_{L_{s,x}^{2}} + \| A_{x} \|_{L_{s,x}^{4}}^{2} + \| \psi_{x} \|_{L_{s,x}^{4}}^{2}) 
 \lesssim \varepsilon
\]
Then
\[
 \sum_{j \leq k - 5} \| P_{k - 5 \leq \cdot \leq k + 5} 
 (\int_{0}^{s} e^{(s - s') \Delta} U_{t}(s') ds') 
 \|_{L_{t}^{4} L_{x}^{2}} 2^{\frac{j}{2}} \| P_{j} A_{x} \|_{L_{t,x}^{4}} 
 \lesssim 
 2^{k} \upsilon_k (1 + s2^{2k})^{-4}
\]
and
\[
\sum_{j \geq k + 5} 2^{k} \| P_{j}(\int_{0}^{s} e^{(s - s') \Delta} U_{t}(s') ds') \|_{L_{t}^{4} L_{x}^{\frac43}} 
\| P_{j} A_{x} \|_{L_{t,x}^{4}} 
\lesssim 
s^{-\frac12} \upsilon_k (1 + s 2^{2k})^{-4}
\]
We conclude $\|  P_{k} (A_{x} \psi_{t}) \|_{L_{t,x}^{2}} \lesssim (s^{-\frac12} + 2^{k}) \upsilon_k (1 + s 2^{2k})^{-4}$.
\begin{rem}
It is possible to prove $2^{-8k} s^{-4}$ decay of $\| P_{> k - 5} \psi_{t} \|_{L_{s}^{2} L_{t,x}^{4}}$ by using the usual bootstrap argument.
\end{rem}

\textbf{The term $\partial_x \psi_t - i \Delta \psi_x$}

By \eqref{HF2}, we have
\[
 \| -i \Delta \int_{0}^{s} e^{(s - s') \Delta} P_{k}(U_{x}(s')) ds' \|_{L_{t,x}^{2}} \lesssim 2^{k} \upsilon_k (1 + s 2^{2k})^{-4}
\]
Next, taking advantage of $\int_0^s e^{-(s - s^\prime)2^{2k}} ds^\prime \lesssim 2^{-2k}$, we write
\[
 \| \partial_{x} \int_{0}^{s} e^{(s - s') \Delta} P_{k}(U_{t}(s')) ds' \|_{L_{t,x}^{2}} 
 \lesssim
 K_1 + K_2 + K_3 + K_4 + K_5
\]
where here
\[
\begin{split}
K_1 &:= 
2^{k} \| s^{\frac12} P_{k}(A_{x} \psi_{t}) \|_{L_{s}^{\infty} L_{t,x}^{2}}
\\
K_2 &:=
2^{\frac32 k} \sum_{j \geq k} \| \nabla P_{j} A_{x} \|_{L_{s}^{2} L_{t}^{4} L_{x}^{2}} \| \psi_{t} \|_{L_{s}^{2} L_{t,x}^{4}}
+
2^{k} \| s^{\frac12} P_{k - 5 \leq \cdot \leq k + 5} (A_{x} \psi_{t}) \|_{L_{s}^{\infty} L_{t,x}^{2}} \| A_{x} \|_{L_{s}^{\infty} L_{x}^{2}}
\\
K_3 &:=
\sum_{j \leq k} 2^{j} \| s^{\frac12} P_{j}(A_{x} \psi_{t}) \|_{L_{s}^{\infty} L_{t,x}^{2}} \| P_{> k - 5} A_{x} \|_{L_{s}^{\infty} L_{x}^{2}}
+
\sum_{j > k + 5} 2^{\frac32 k} \| s^{\frac12} P_{j}(A_{x} \psi_{t}) \|_{L_{s}^{\infty} L_{t,x}^{2}} \| s^{\frac14} P_{j} A_{x} \|_{L_{s}^{\infty} L_{x}^{2}}
\\
K_4 &:=
2^{k} \| P_{k - 5 \leq \cdot \leq k + 5} \psi_{x} \|_{L_{s}^{\infty} L_{t,x}^{4}} \| \psi_{t} \|_{L_{s}^{2} L_{t,x}^{4}} \| \psi_{x} \|_{L_{s}^{2} L_{x}^{\infty}}
+
\sum_{j > k + 5} \| P_{j} \psi_{x} \|_{L_{t,x}^{4}} \| P_j \psi_{t} \|_{L_{s}^{2} L_{t,x}^{4}} \sum_{l \leq k - 5} 2^{l} \| P_{l} \psi_{x} \|_{L_{s}^{\infty} L_{x}^{2}}
\\
K_5 &:=
2^{k} \sum_{k + 5 < j \leq l} \| P_{j} \psi_{x} \|_{L_{t,x}^{4}} \| P_{l} \psi_{x} \|_{L_{s,x}^{2}} \| \psi_{t} \|_{L_{s}^{2} L_{t,x}^{4}}
+
\sum_{l \leq j \leq k} 2^{l} \| P_{l} \psi_{x} \|_{L_{s}^{\infty} L_{x}^{2}} \| P_{j} \psi_{x} \|_{L_{t,x}^{4}} \| \psi_{t} \|_{L_{s}^{2} L_{t,x}^{4}}
\end{split}
\]
Then $\sum_{j=1}^5 K_j \lesssim 2^k \upsilon_k$.
Through using the usual bootstrapping methods we may upgrade this estimate to
\begin{equation}\label{usualboot}
 \| \partial_{x} \int_{0}^{s} e^{(s - s') \Delta} P_{k} U_{t}(s') ds' \|_{L_{t,x}^{2}} \lesssim 2^{k} \upsilon_k (1 + s2^{2k})^{-4}
\end{equation}
Finally,
\[
\begin{split}
\partial_{x} e^{s \Delta} \psi_{t}(0) - i e^{s \Delta} \Delta \psi_{x}(0) 
&= 
-\partial_{x} e^{s \Delta} (A_{l} \psi_{l})(0) + i \partial_{l} e^{s \Delta} (\partial_{x} \psi_{l})(0) - i \Delta e^{s \Delta} \psi_{x}(0)
\\
&=
i \partial_{l} e^{s \Delta} (D_{x} \psi_{l})(0) - i \Delta e^{s \Delta} \psi_{x}(0)  
- \partial_{x} e^{s \Delta} (A_{l} \psi_{x})(0) + \partial_{l} e^{s \Delta} (A_{x} \psi_{l})(0)
\\
&=
-\partial_{l} e^{s \Delta} (A_{l} \psi_{l})(0) - \partial_{x} e^{s \Delta} (A_{l} \psi_{x})(0) + \partial_{l} e^{s \Delta} (A_{x} \psi_{l})(0)
\end{split}
\]
As
\[
\begin{split}
\| \nabla e^{s \Delta} (A_{x} \psi_{x}) \|_{L_{t,x}^{2}} 
\lesssim &\;
2^k \| P_{k - 5 \leq k \leq k + 5} \psi_{x} \|_{L_{t,x}^{4}} \| A_{x} \|_{L_{t,x}^{4}} 
+ \sum_{j \leq k} 2^{j} \| P_{j} \psi_{x} \|_{L_{t,x}^{4}} \| P_{k} A_{x} \|_{L_{t}^{4} L_{x}^{\frac43}}
\\
&
+ \sum_{j > k + 5} 2^{k} \| P_{j} \psi_{x} \|_{L_{t,x}^{4}} \| P_{j} A_{x} \|_{L_{t}^{4} L_{x}^{\frac43}}
\end{split}
\]
with right hand side bounded by $2^k \upsilon_k (1 + s2^{2k})^{-4}$, this completes the proof.
\end{proof}

Finally, we prove \eqref{HF0} for the case $| I | < \infty$.
\begin{proof}[Proof of Theorem \ref{thm:HF0} when $|I| < \infty$]
Without loss of generality assume $|I| = 1$.
Let $\chi \in C^\infty$ be a positive bump function
that is identically 1 on $I$ and supported on an interval of length 2.

While \eqref{HF1}, \eqref{HF2} are not sensitive to time cutoffs,
\eqref{HF3} is. However, by combining \eqref{HF2} and \eqref{HF3}, we have
\begin{equation}
\begin{split}
\| (\partial_{t} - i \Delta) \chi(t) (P_{k} \psi_{x}) \|_{L_{t,x}^{2}} 
&\lesssim 
\upsilon_k 2^{k} (1 + s 2^{2k})^{-4} + \| P_{k} \psi_{x} \|_{L_{t}^{\infty} L_{x}^{2}} \\
&\lesssim \upsilon_k (2^{k} + 1) (1 + s 2^{2k})^{-4}
\end{split}
\end{equation}
which is sufficient for the case $k \geq 0$.

For low frequencies, we build up the $X_k$ bound by bounding $\psi_x$ in each function space
that appears in the definition. As noted, we already have \eqref{HF1}. Combining this with
Sobolev embedding, H\"older in time, and the fact that $k \leq 0$, we conclude
\[
\begin{split}
2^{\frac{k}{2}} \sup_{|j - k| \leq 20} \sup_{\theta \in S^{1}} \sup_{|\lambda| < 2^{k - 4}} \| P_{j, \theta} P_k \psi_{x} \|_{L_{\theta, \lambda}^{\infty, 2}} 
&\lesssim 2^{k} \| P_k \psi_{x} \|_{L_{t,x}^{2}} 
\lesssim \upsilon_k (1 + s2^{2k})^{-4}
\end{split}
\]
By Sobolev embedding, H\"older's inequality, and interpolating \eqref{HF1} and \eqref{DoL4},
we obtain
\[
\begin{split}
2^{\frac{k}{6}} \| P_{j, \theta} P_k \psi_{x}(s) \|_{L_{\theta}^{6,3}(I)} 
&\lesssim 
2^{\frac{k}{3}} \| P_{j, \theta} P_k \psi_{x}(s) \|_{L_{t,x}^{3}(I)} 
\lesssim 
2^{\frac{k}{3}} \| P_{j, \theta} P_k \psi_{x}(s) \|_{L_{t}^{6} L_{x}^{3}(I)} 
\lesssim 
\upsilon_k (1 + s 2^{2k})^{-4}
\end{split}
\]
for $|j - k| \leq 20$, $\theta \in \Sp^1$, and $k \leq 0$.
It remains to prove
\begin{equation} \label{maxest}
\| P_{k} \psi_{x}(s) \|_{L_{x}^{2} L_{t}^{\infty}(I)} \lesssim \upsilon_k (1 + s 2^{2k})^{-4}
\end{equation}
for $k \leq 0$.
To prove \eqref{maxest}, it suffices to prove
\begin{equation} \label{maxestsuff}
\lVert \partial_t P_k \psi_x(s) \rVert_{L^2_x L^1_t} \lesssim \upsilon_k (1 + s 2^{2k})^{-1}
\end{equation}
in view of the energy estimate \eqref{HF1} and the fundamental theorem of calculus.
We do this in several steps.
To begin, we take advantage of the compatibility relation \eqref{txcompexp}
and expand the $\psi_\alpha$ terms using \eqref{Duhamel}. We then control each term
individually.

\textbf{The term $\partial_x \psi_t$}

The control the linear contribution from $\partial_x \psi_t$, we use the fact that
at heat time $s = 0$ we have the relation \eqref{SM-freeze}.
Hence we control
\[
\| e^{s \Delta} P_{k} \partial_{x} \partial_{l} \psi_{l}(0) \|_{L_{t}^{1} L_{x}^{2}} 
\lesssim \upsilon_k 2^{2k} (1 + s 2^{2k})^{-4} 
\lesssim \upsilon_k (1 + s 2^{2k})^{-4}
\]
(as $k \leq 0$). It also holds that
\[
\begin{split}
\| e^{s \Delta} \partial_{x} P_{k} (A_{l}(0) \psi_{l}(0)) \|_{L_{t}^{1} L_{x}^{2}}
\lesssim &\;
2^{k} \sum_{j \leq k} 2^{j} \| P_{j} \psi_{l}(0) \|_{L_{t}^{\infty} L_{x}^{2}} \| A_{x} \|_{L_{t}^{\infty} L_{x}^{2}}
\\
&
+ 2^{k} (1 + s 2^{2k})^{-4} \| P_{k - 5 \leq \cdot \leq k + 5} \psi_{x}(0) \|_{L_{t,x}^{4}} \| A_{x} \|_{L_{t,x}^{4}}
\\
&
+2^{2k} \sum_{j > k + 5} 2^{-j} \| P_{j} \psi_{x}(0) \|_{L_{t}^{\infty} L_{x}^{2}} \| P_{j} \nabla A_{x} \|_{L_{t,x}^{2}}
\end{split}
\]
and the right hand side is bounded by $\upsilon_k (1 + s 2^{2k})^{-4}$.
To control the nonlinear Duhamel term, we use \eqref{usualboot} and the fact that $|I| = 1$ to obtain
\[
\| P_{k} \partial_{x} \int_{0}^{s} e^{(s - s') \Delta} U_{t}(s') ds' \|_{L_{t}^{1} L_{x}^{2}} 
\lesssim \upsilon_k 2^{k} (1 + s 2^{2k})^{-4} \lesssim \upsilon_k (1 + s 2^{2k})^{-4}
\]
Therefore we conclude
\[
 \| \partial_{x} P_{k} \psi_{t} \|_{L_{x}^{2} L_{t}^{1}} \lesssim \upsilon_k (1 + s 2^{2k})^{-4},
 \quad \quad
 k \leq 0
\]

\textbf{The term $A_t \psi_x$}

Here we use \eqref{HF2} to obtain
\[
\begin{split}
\| P_{k} (A_{t} \psi_{x}) \|_{L_{x}^{2} L_{t}^{1}} 
\lesssim &\;
\| A_{t} \|_{L_{t,x}^{2}} \sum_{j \leq k} 2^{j} \| P_{j} \psi_{x} \|_{L_{t}^{\infty} L_{x}^{2}}
+
2^{k} \| P_{k - 5 \leq \cdot \leq k + 5} \psi_{x} \|_{L_{t}^{\infty} L_{x}^{2}} \| A_{t} \|_{L_{t,x}^{2}}
\\
&
+ \sum_{j > k + 5} 2^{\frac{k}{2}} \sup_{|\ell - j | \leq 20} \sup_{\theta \in \Sp^1}
\| P_{\ell, \theta} P_{j} e^{s \Delta} \psi_{x}(0) \|_{L_{\theta}^{6,3}} \| P_{j} A_{t} \|_{L_{t,x}^{2}} 
\\
&
+ \sum_{j > k + 5} 2^{k} \| P_{j} \int_{0}^{s} e^{(s - s') \Delta} U_{x}(s') \|_{L_{t,x}^{2}} \| P_{j} A_{t} \|_{L_{t,x}^{2}}
\end{split}
\]
The right hand side is bounded by $\upsilon_k(1 + s 2^{2k})^{-4}$, as desired.

\textbf{The term $A_x \psi_t$}

We again take advantage of \eqref{txcompexp} and \eqref{Duhamel}.

The nonlinear Duhamel term is bounded by
\[
\begin{split}
\| P_{k} \left[A_{x} \int_{0}^{s} e^{(s - s') \Delta} U_{t}(s')ds^\prime \right] \|_{L_{t}^{1} L_{x}^{2}} 
&
\lesssim 2^{\frac{k}{2}} \| \int_{0}^{s} e^{(s - s') \Delta} U_{t}(s') ds' \|_{L_{t,x}^{2}} 
\sum_{j \leq k + 5} 2^{\frac{j}{2}} \| P_{j} A_{x} \|_{L_{t,x}^{4}}
\\
&\quad
+ 2^{k} \sum_{j > k + 5} \| P_{j} A_{x} \|_{L_{t,x}^{2}} \| P_{j} \int_{0}^{s} e^{(s - s') \Delta} U_{t}(s') ds' \|_{L_{t,x}^{2}}
\end{split}
\]
and the right hand side is bounded by $\upsilon_k (1 + s 2^{2k})^{-4}$.

This leaves the terms coming from the linear evolution.
The first of these we control with
\[
\begin{split}
 \| P_{k} (A_{x} e^{s \Delta} \partial_{l} \psi_{l}(0)) \|_{L_{x}^{2} L_{t}^{1}} 
 &\lesssim \;
 \sum_{j \leq k - 5} \|  A_{x} \|_{L_{t,x}^{4}} \| P_{j} e^{s \Delta} (\partial_{l} \psi_{l}(0)) \|_{L_{t,x}^{4}} 
 \\
 &
\quad + \| P_{k - 5 \leq \cdot \leq k + 5} e^{s \Delta} \partial_{l} \psi_{l}(0) \|_{L_{t,x}^{4}} \| A_{x} \|_{L_{t,x}^{4}}
 \\
 &
\quad +2^{\frac{k}{2}} \sum_{j \geq k + 5} \| P_{j} A_{x} \|_{L_{t,x}^{2}} 
 \sup_{|p - j| \leq 20} \sup_{\theta \in \Sp^1} \| e^{s \Delta} P_{p, \theta} P_{j} \partial_{l} \psi_{l}(0) \|_{L_{\theta}^{6,3}}
\end{split}
\]
whose right hand side is bounded by $\upsilon_k (1 + s 2^{2k})^{-4}$.
For the second, we have
\[
\begin{split}
&\| P_{k} (e^{s \Delta} A_{l}(0) \psi_{l}(0)) A_{x} \|_{L_{t}^{1} L_{x}^{2}} 
\lesssim
\\
&\quad \quad
(1 + s 2^{2k})^{-4} \| \psi_{x}(0) \|_{L_{t,x}^{4}} \| A_{x}(0) \|_{L_{t,x}^{4}} \sum_{j \leq k - 5} 2^{\frac{j}{2}} \| P_{j} A_{x}(s) \|_{L_{t,x}^{4}}
\\
&\quad \quad
+\| P_{k - 5 \leq \cdot \leq k + 5} A_{x} \|_{L_{t,x}^{4}} \| A_{l}(0) \| _{L_{t,x}^{4}} \| \psi_{l}(0) \|_{L_{t,x}^{4}}
\\
&\quad \quad
+\sum_{j > k + 5} 2^{k}  \| P_{j} A_{x}(s) \|_{L_{t,x}^{2}} 
\left[ 
(1 + s2^{2k})^{-4} \| P_{j} \psi_{x}(0) \|_{L_{t,x}^{4}} \| A_{x}(0) \|_{L_{t,x}^{4}} + \right.
\\
&\quad \quad \quad \quad \quad \quad \quad \quad \quad \quad \quad \quad \quad \quad
\left. \| \psi_{x}(0) \|_{L_{t,x}^{4}} \| P_{j} A_{x}(0) \|_{L_{t,x}^{4}} \right]
\end{split}
\]
and the right hand side is bounded by $\upsilon_k (1 + s 2^{2k})^{-4}$.
\end{proof}

\section{Closing the argument} \label{sec:closer}

We present two different arguments for handling the part of the nonlinearity $\cN_m$
not covered by Lemma \ref{lem:Npert}. The first argument relies on a time subdivision,
and the second on a discrete Gronwall-type approach.

\subsection{Argument I}

It suffices to control either $\nabla \cdot (A_x \psi_x)$ or $A_x \nabla \psi_x$, as they are equivalent up to a
$(\nabla \cdot A_x) \psi_x$ term, which was controlled in \S \ref{sec:mainresult}.

Partition $I$ into finitely many pieces such that
\begin{equation} \label{smallintervals}
 \sum_{k} \sum_{l \leq k} \int_{0}^{\infty} \| (P_{k} \psi_{x}(0))(P_{l} \psi_{x}(s)) \|_{L_{t,x}^{2}(I_{j})}^{2} 2^{(k - l)} 2^{2l} ds \leq \epsilon
\end{equation}
on each subinterval $I_{j}$. 

If
\[
 \sum_{k} \| P_{k} \psi_{x} \|_{X_{k}(I_{j})}^{2} < \infty
\]
then there exists a frequency envelope $a_{k}$ such that
\[
  \| P_{k} \psi_{x} \|_{X_{k}(I_{j})} \leq a_{k}
\]
for all $k$ and
\[
 \sum_{k} a_{k}^{2} \lesssim \sum_{k} \| P_{k} \psi_{x} \|_{X_{k}(I_{j})}^{2}
\]
Therefore Theorem \ref{thm:HF0} implies
\[
 \sum_{k} (\sup_{s} \| P_{k} \psi_{x}(s) \|_{X_{k}(I_{j})})^{2} \lesssim \sum_{k} \| P_{k} \psi_{x} \|_{X_{k}(I_{j})}^{2}
\]

We look at the term
\[
 \nabla \cdot (A_{x} \psi_{x}) = -\nabla \cdot (\psi_{x} \int_{0}^{\infty} \Im(\bar{\psi}_{x} D_{\ell} \psi_{\ell})(s) ds)
\]
and decompose it into two main pieces.

First, we consider
\begin{equation} \label{Adpsi-case1}
\sum_{k} \| P_{k}((\nabla \psi_{x}) \cdot \int_{0}^{\infty} \Im(\bar{\psi}_{x} \partial_{\ell} \psi_{\ell})(s) ds) \|_{Y_{k}(I_{j})}^{2}
\lesssim
K_1 + K_2 + K_3 + K_4
\end{equation}
where here
\[
\begin{split}
K_1 &:=
\sum_{k} 2^{k} (\sum_{l_{1} \leq l_{2} \leq k - 10} a_{l_{1}} 2^{l_{2}} 2^{\frac{l_{1}}{2}} \cdot \int_{0}^{\infty}  \| (P_{k - 5 \leq \cdot \leq k + 5} \psi_{x}(0)) (P_{l_{2}} \psi_{x}(s)) \|_{L_{t,x}^{2}(I_{j})}  ds)^{2}
\\
K_2 &:=
\sum_k (2^{2k} \sum_{j > k - 10} \int_0^\infty \lVert P_j \psi_x(s) \rVert_{L^4_{t,x}} ds )^2 a_k^2 \epsilon^2
\\
K_3 &:= \sum_k 2^{2k} ( \sum_{j > k + 10} a_j \sum_{j_1 > j - 10} 2^{j_1} \lVert P_{j_1} \psi_x(s) \rVert_{L^4_{t,x}} ds)^2
\epsilon^2
\\
K_4 &:= \sum_k ( \sum_{j < k - 10} \lVert (P_j \psi_x) (\nabla P_{k - 10 \leq \cdot \leq k+10} A_x) \rVert_{Y_k(I_j)})^2
\end{split}
\]
By \eqref{bilinear},
\[
K_4 \lesssim \epsilon^4 \sum_k (\sum_{j < k -10} a_j 2^{\frac{j-k}{2}})^2 \lesssim \epsilon^4 (\sum_k a_k^2)
\]
Combining \eqref{smallintervals} with the Cauchy-Schwarz inequality,
\[
\sum_{k} \| P_{k}((\nabla \psi_{x}) \cdot \int_{0}^{\infty} \Im(\bar{\psi}_{x} \partial_{\ell} \psi_{\ell})(s) ds) \|_{Y_{k}(I_{j})}^{2}
 \lesssim \epsilon (\sum_{l} a_{l}^{2})
\]

Now consider the term
\[
 \sum_{k} \| P_{k}((\nabla \psi_{x}) \cdot \int_{0}^{\infty} \Re(\bar{\psi}_{x} A_{\ell} \psi_{\ell})(s) ds) \|_{Y_{k}(I_{j})}^{2}
\]
As
\[
\begin{split}
 \| P_{l} (A_{x} \psi_{x}) \|_{L_{t,x}^{4}} 
\lesssim&\;
2^{l} \| P_{l - 5 \leq \cdot \leq l + 5} \psi_{x} \|_{L_{t,x}^{4}} \| A_{x} \|_{L_{t}^{\infty} L_{x}^{2}}
+ 2^{\frac{l}{2}} \sum_{l_{1} \leq l - 5} 2^{\frac{l_{1}}{2}} \| P_{l_{1}} \psi_{x} \|_{L_{t,x}^{4}} \| P_{> l - 5} A_{x} \|_{L_{t}^{\infty} L_{x}^{2}}
\\
&
+ 2^{\frac{3l}{2}} \sum_{j > l + 5} \| P_{j} \psi_{x} \|_{L_{t,x}^{4}} \| P_{> j} A_{x} \|_{L_{t}^{\infty} L_{x}^{\frac43}}
\end{split}
\]
with right hand side bounded by $\upsilon_l(1 + s 2^{2l})^{-4}$, it therefore follows that
terms involving $P_{> k - 10} (A_{x} \psi_{x})(s)$ or $P_{> k - 10} \psi_{x}(s)$ can be analyzed in identical fashion to 
those appearing in the last two lines of \eqref{Adpsi-case1}.
This leaves us with a term of the form
\[
 \sum_{k} \| P_{k}((\nabla \psi_{x}) \cdot \sum_{l_{1} \leq l_{2} \leq l - 10} \int_{0}^{\infty} \Re(P_{l_{1}}(\bar{\psi}_{x}) P_{l_{2}}(A_{\ell} \psi_{\ell}))(s) ds) \|_{Y_{k}(I_{j})}^{2}
\]
Invoking \eqref{bilinear} and by choosing $C_{0}(E_{0})$ to be a very large, fixed constant,
we bound this term by
\[
\begin{split}
&\sum_{k} 2^{k}2^{\frac{C_{0}}{2}} (\sum_{l_{1} \leq l_{2} \leq k - 10} a_{l_{1}} 2^{l_{2}} 2^{\frac{l_{1}}{2}} \cdot \int_{0}^{\infty}  \| (P_{k - 5 \leq \cdot \leq k + 5} \psi_{x}(0)) (P_{l_{2}} \psi_{x}(s)) \|_{L_{t,x}^{2}(I_{j})}  ds)^{2}
\\
&+ \sum_{k} 2^{k} ( \sum_{l_{1} \leq l_{2} \leq 2^{k - 10} : l_{3} \geq l_{2} + C_{0} } (\sup_{s} \| (P_{k - 5 \leq \cdot \leq k + 5} \psi_{x}(0)) (P_{l_{1}} \psi_{x}(s)) \|_{L_{t,x}^{2}}) 2^{\frac{l_{3}}{2}} a_{l_{3}} \| P_{l_{3}} A_{x} \|_{L_{s}^{1} L_{x}^{\infty}})^{2}
\end{split}
\]
which is controlled by $\epsilon 2^{\frac{C_{0}}{2}} (\sum_{l} a_{l}^{2}) + 2^{-\frac{C_{0}}{2}} (\sum_{l} a_{l}^{2})$.

Choosing $C_{0}(E_{0})$ sufficiently large, $\epsilon(C_{0})$ sufficiently small, then we have, 
since $\sum_{l} b_{l}^{2} \lesssim \sum_{l} \| P_{l} \psi_{x}(0) \|_{X_{l}(I_{j})}^{2}$,
\[
 \sum_{k} \| P_{k} \psi_{x}(0) \|_{X_{k}(I_{j})}^{2} \lesssim_{E_{0}, C_{0}} 1
\]
Plugging the finiteness of $\sum_{l} b_{l}^{2}$ back into the above yields
\[
 \| P_{k} \psi_{x}(0) \|_{X_{k}(I_{j})}  \lesssim \upsilon_k
\]

\subsection{Argument II}

Our goal is to control $A_\ell \partial_\ell \psi_m$.
To do so, we begin by decomposing it according to the usual
Littlewood-Paley trichotomy:
\begin{equation} \label{Adpsi-trich}
P_k \left(A_\ell \partial_\ell \psi_m\right)
=
P_k [ (
\sum_{\substack{k_1\leq k - 5 \\ |k_2 - k| \leq 4}} +
\sum_{\substack{k_2\leq k - 5 \\ |k_1 - k| \leq 4}} +
\sum_{\substack{k_1, k_2 \geq k - 5 \\ |k_1 - k_2| \leq 8}} 
)
P_{k_1} A_\ell \partial_\ell P_{k_2} \psi_m ]
\end{equation}
\begin{rem}
In this section we slightly abuse asymptotic notation, as we endow it with a meaning different from
its usual one when applied to indices indicating frequency projections.
For instance, if $k_1$, $k_2$ are indices associated to the Littlewood-Paley projections $P_{k_1}$, $P_{k_2}$, 
then the expression $k_1 \lesssim k_2$ is a shorthand for $2^{k_1} \lesssim 2^{k_2}$, etc. 
In other circumstances, the asymptotic notation retains its usual meaning.
\end{rem}
When $k_2 \lesssim k_1 \sim k$ as in the second sum of \eqref{Adpsi-trich}, we
treat the derivative on $\psi_m$ as $2^{k_2}$, transfer it to $A_\ell$,
and then use the $L^2_{t,x}$ bound on $\partial_x A_x$.
\begin{lem} \label{lem:HL}
It holds that
\[
\lVert
\sum_{\substack{k_2\leq k - 5 \\ |k_1 - k| \leq 4}}
P_{k_1} A_\ell \partial_\ell P_{k_2} \psi_m
\rVert_{Y_k(I)}
\lesssim \varepsilon^2 b_k
\]
\end{lem}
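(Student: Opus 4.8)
The plan is to estimate the high--low interaction $P_{k_1}A_\ell\,\partial_\ell P_{k_2}\psi_m$ with $k_2\le k-5$, $|k_1-k|\le 4$, by integrating by parts to move the derivative off $\psi_m$. Since $\partial_\ell P_{k_2}\psi_m$ has frequency $\sim 2^{k_2}\ll 2^k$, I would write $\partial_\ell P_{k_2}\psi_m$ after the projection $P_k$ as effectively carrying a factor $2^{k_2}$, but rather than bound $\|\partial_\ell P_{k_2}\psi_m\|$ directly I transfer the derivative to $A_\ell$ via the identity $P_k(P_{k_1}A_\ell\,\partial_\ell P_{k_2}\psi_m) = P_k\big(\partial_\ell(P_{k_1}A_\ell\,P_{k_2}\psi_m)\big) - P_k\big((\partial_\ell P_{k_1}A_\ell)\,P_{k_2}\psi_m\big)$, and observe that on the support of $P_k$ the first term has its derivative at frequency $\sim 2^k$, so it is $\lesssim 2^k\|P_{k_1}A_\ell\,P_{k_2}\psi_m\|$ while the second is governed by the $L^2_{t,x}$ bound on $\partial_x A_x$ from Lemma~\ref{lem:L2bounds}. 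The first term is the genuinely low-frequency $\psi_m$ piece, which is where the smallness must come from.

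For the main term, the key step is to apply the bilinear estimate \eqref{bilinear} in the regime $k_1\le k-80$ would be wrong here since $|k_1-k|\le 4$; instead $k_1\sim k$ and $k_2\le k-5$ places us in the range where we use the first line of \eqref{bilinear} (comparable frequencies, up to the harmless $O(1)$ separation), giving $\|P_k(h\,f)\|_{Y_k(I)}\lesssim \|h\|_{L^2_{t,x}}\|f\|_{X_k(I)}$ with $h = P_{k_2}\psi_m$ and $f$ a derivative of $P_{k_1}A_\ell$. Concretely I would bound
\[
\Big\lVert P_k\big(\partial_\ell(P_{k_1}A_\ell\,P_{k_2}\psi_m)\big)\Big\rVert_{Y_k(I)}
\lesssim 2^k\,\lVert P_{k_2}\psi_m\rVert_{L^2_{t,x}}\,\lVert P_{k_1}A_x\rVert_{X_{k_1}(I)},
\]
then use $\lVert P_{k_2}\psi_m\rVert_{L^2_{t,x}} = 2^{-k_2}\lVert \partial_\ell^{-1}\text{-type}\rVert$—more precisely, I bound $2^k\lVert P_{k_2}\psi_m\rVert_{L^2_{t,x}} \lesssim 2^{k-k_2}\cdot 2^{k_2}\lVert P_{k_2}\psi_m\rVert_{L^2_{t,x}}$ and absorb $2^{k_2}\lVert P_{k_2}\psi_m\rVert$ using the bilinear/heat-flow control, while the factor $2^{k-k_2}$ must be beaten by decay in $k_1-k_2$. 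Here is the actual mechanism: I would instead keep the derivative on $A_\ell$ all along, using $\lVert\partial_x P_{k_1}A_x\rVert_{X_{k_1}(I)}\lesssim \varepsilon b_{k_1}$ (which follows from Lemma~\ref{lem:L2bounds} together with the definition of $b_k$), so that $\lVert P_{k_1}A_\ell\,\partial_\ell P_{k_2}\psi_m\rVert$ is bounded, after summing in $k_2\le k-5$, by $\varepsilon\,b_k\cdot\big(\sum_{k_2\le k-5}\lVert P_{k_2}\psi_m\rVert_{L^\infty_t L^2_x}\big)$, and the inner sum is $\lesssim \varepsilon$ by \eqref{ScatteringSize} via the $\ell^2\to\ell^1$ gain from the bilinear estimate \eqref{bls}. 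This yields the claimed $\varepsilon^2 b_k$.

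The main obstacle I expect is not the trichotomy bookkeeping but the logarithmic summation in $k_2$: a naive estimate $\sum_{k_2\le k}\lVert P_{k_2}\psi_m\rVert_{L^\infty_t L^2_x}$ diverges, so one genuinely needs the bilinear gain \eqref{bls}, which supplies a factor $2^{-|k_1-k_2|/2}\upsilon_{k_1}\upsilon_{k_2}$ when pairing $P_{k_1}\psi_x$ against $P_{k_2}\psi_x$ (the $\psi_x$ inside $A_\ell = -\int\Im(\bar\psi_x D_j\psi_j)$ must be paired with the low-frequency $P_{k_2}\psi_m$), turning the borderline sum into a geometric one. The secondary subtlety is that $A_\ell$ is an integral in heat-time $s$ of a quadratic expression in $\psi_x$; to use \eqref{bls} one expands $A_\ell$ and then interchanges the $s$-integral with the space-time norms, relying on the $(1+s2^{2j})^{-4}$ decay from Theorem~\ref{thm:HF0} to render the $s$-integral finite and $s$-uniform. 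Handling the covariant-derivative piece $A_\ell$ inside $D_j\psi_j$ (as opposed to $\partial_j\psi_j$) produces a cubic term, but this is lower order and absorbed by an extra factor of $\varepsilon$ from Lemma~\ref{lem:L2bounds}.
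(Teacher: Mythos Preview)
Your initial instinct---transfer the derivative from $\psi_m$ to $A_\ell$---is correct and is exactly what the paper does. But the execution goes astray in two places.

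First, the claim $\lVert\partial_x P_{k_1}A_x\rVert_{X_{k_1}(I)}\lesssim \varepsilon b_{k_1}$ is unfounded: $b_k$ is a frequency envelope for $\psi_x$ in $X_k$, not for $A_x$, and nothing in the paper places $A_x$ in the $X_k$ spaces. In the bilinear estimate \eqref{bilinear} the connection $A_x$ must play the role of the $L^2_{t,x}$ factor $h$, not the $X_j$ factor $f$.

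Second, and more importantly, you misidentify where the summability in $k_2$ comes from. You assert that $\sum_{k_2\le k}$ is borderline and therefore one ``genuinely needs'' \eqref{bls} and the heat-time expansion of $A_\ell$. That is false for this particular high--low interaction. The paper's proof is a direct two-line estimate: apply \eqref{bilinear} with $h=P_{k_1}A_\ell$ and $f=\partial_\ell P_{k_2}\psi_m\in X_{k_2}$, obtaining (without even using the off-diagonal gain)
\[
\lVert P_k(P_{k_1}A_\ell\,\partial_\ell P_{k_2}\psi_m)\rVert_{Y_k(I)}
\lesssim \lVert P_{k_1}A_\ell\rVert_{L^2_{t,x}}\cdot 2^{k_2}\lVert P_{k_2}\psi_m\rVert_{X_{k_2}(I)}.
\]
Now $\lVert P_{k_1}A_\ell\rVert_{L^2_{t,x}}\lesssim 2^{-k_1}\lVert\partial_x A_x\rVert_{L^2_{t,x}}\lesssim 2^{-k_1}\varepsilon^2$ by Lemma~\ref{lem:L2bounds}, and $\lVert P_{k_2}\psi_m\rVert_{X_{k_2}}\le b_{k_2}$. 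The product is $2^{-(k_1-k_2)}\varepsilon^2 b_{k_2}$, already geometrically decaying in $k_1-k_2$; summing over $k_2\le k-5$ and $|k_1-k|\le 4$ gives $\varepsilon^2 b_k$ by the frequency-envelope summation rule. No expansion of $A_\ell$ via \eqref{A}, no appeal to \eqref{bls}, and no heat-time integral is needed. The machinery you propose is precisely what is required for the \emph{other} pieces of the trichotomy (Lemmas~\ref{lem:HH}, \ref{lem:LH-1}, \ref{lem:Nnonpert}), where no such derivative transfer is available; here it is unnecessary, and the intermediate bounds you invoke do not hold as stated.
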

\begin{proof}
By Lemmas \ref{lem:YkL2} and \ref{lem:L2bounds} 
and the frequency envelope property, we have
\[
\begin{split}
\lVert
\sum_{\substack{k_2\leq k - 5 \\ |k_1 - k| \leq 4}}
P_{k_1} A_\ell \partial_\ell P_{k_2} \psi_m
\rVert_{Y_k(I)}
&\lesssim
\sum_{\substack{k_2\leq k - 5 \\ |k_1 - k| \leq 4}}
2^{-|k_2 - k_1|} \lVert 2^{k_1} P_{k_1} A_x \rVert_{L^2_{t,x}} \lVert P_{k_2} \psi_m \rVert_{X_{k_2}(I)}
\\
&\lesssim
\sum_{\substack{k_2\leq k - 5 \\ |k_1 - k| \leq 4}}
2^{-|k_2 - k_1|}
\varepsilon^2 b_{k_2}
\lesssim
\varepsilon^2 b_k
\end{split}
\]
\end{proof}
For the high frequencies we lose summability.
This can be overcome by expanding $A$ using the representation \eqref{A}
and refining our analysis.
Hence we consider
\begin{equation} \label{Adpsi}
P_k (A_\ell \partial_\ell \psi_m) =
-P_k[
\sum_{k_1, k_2, k_3}
\int_0^\infty P_{k_1} \psi_\ell(s^\prime) P_{k_2} \psi_s(s^\prime) ds^\prime \partial_\ell P_{k_3} \psi_m
]
\end{equation}
In \eqref{Adpsi},
we expand $\psi_s$ using \eqref{SM-freeze}, so that 
$\psi_s(s^\prime) = \partial_j \psi_j(s^\prime) + (i A_j \psi_j)(s^\prime)$,
and then treat the $\partial_j \psi_j(s^\prime)$ and $(i A_j \psi_j)(s^\prime)$ terms separately.
\begin{lem} \label{lem:HH}
It holds that
\[
\lVert P_k [
\sum_{\substack{k_1, k_2 \geq k - 5 \\ |k_1 - k_2| \leq 8}} 
P_{k_1} A_\ell \partial_\ell P_{k_2} \psi_m ]
\rVert_{Y_k(I)} \lesssim \upsilon_k^2 b_k
\]
\end{lem}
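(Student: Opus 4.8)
The plan is to treat the high--high interaction of Lemma~\ref{lem:HH} by unfolding the connection coefficient $A_\ell$ --- which here sits at the high output frequency --- via the caloric-gauge representation \eqref{A}. At physical time (heat time $s=0$) this gives, as anticipated in \eqref{Adpsi},
\[
P_{k_1}A_\ell(0)=-\int_0^\infty P_{k_1}\Im\bigl(\bar\psi_\ell(s^\prime)\,\psi_s(s^\prime)\bigr)\,ds^\prime ,
\]
and, using \eqref{HF-freeze} together with $D_j\psi_j=\partial_j\psi_j+iA_j\psi_j$, we split $\psi_s$ into a main piece $\partial_j\psi_j$ and a more perturbative piece $iA_j\psi_j$. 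The high--high portion thereby becomes a trilinear expression in $\psi_x$ (plus a lower-order term carrying an extra $A_j$), integrated in the heat variable $s^\prime$, to which the bilinear smoothing estimate \eqref{bls}, the propagated $L^4$ bound \eqref{DoL4}, and the heat-flow estimates of \S\ref{sec:heatflowbounds} apply.

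The technical core will be the auxiliary estimate
\[
\lVert P_{k_1}A_\ell(0)\rVert_{L^2_{t,x}(I\times\R^2)}\lesssim 2^{-k_1}\upsilon_{k_1}^2 .
\]
For the $\partial_j\psi_j$ piece I decompose the product $\bar\psi_\ell(s^\prime)\,\partial_j\psi_j(s^\prime)$ into high--high, high--low, and low--high sub-interactions relative to the output frequency $2^{k_1}$; in each sub-case the $L^2_{t,x}$ norm is controlled by H\"older's inequality and \eqref{DoL4}, $\lVert P_m\psi_x(s^\prime)\rVert_{L^4_{t,x}}\lesssim\upsilon_m(1+s^\prime 2^{2m})^{-4}$, supplemented --- in the sub-case where the output frequency is carried by the differentiated factor --- by the frequency-gap gain of \eqref{bls}, which is what makes the low-frequency sum converge. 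The spatial derivative costs $\sim 2^m$ at the differentiated frequency, but this is exactly paid for by the heat integral, $\int_0^\infty(1+s^\prime 2^{2m})^{-8}\,ds^\prime\sim 2^{-2m}$; performing the frequency summations with the slowly-varying property of $\upsilon$ then produces $2^{-k_1}\upsilon_{k_1}^2$ in every sub-case. For the $iA_j\psi_j$ piece, the additional factor $A_j$ is controlled by \eqref{PkAL4} and the $L^4_{t,x}$ smallness of Lemma~\ref{lem:L2bounds}, gaining an extra power of $\varepsilon$, so that term satisfies an even better bound.

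Granting the auxiliary estimate, I close the lemma as follows. In $P_k\bigl[\sum_{k_1,k_2\ge k-5,\,|k_1-k_2|\le 8}P_{k_1}A_\ell\,\partial_\ell P_{k_2}\psi_m\bigr]$, treat $\partial_\ell P_{k_2}\psi_m$ as an element of $X_{k_2}(I)$ with $\lVert\partial_\ell P_{k_2}\psi_m\rVert_{X_{k_2}(I)}\lesssim 2^{k_2}\lVert P_{k_2}\psi_m\rVert_{X_{k_2}(I)}\lesssim 2^{k_2}2^{\delta|k_2-k|}b_k$, and place $P_{k_1}A_\ell$ in $L^2_{t,x}$. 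Applying the bilinear $Y_k$ estimate \eqref{bilinear} --- since $k_2\ge k-5$, this is the regime $k\le k_2-80$ for all but $O(1)$ values of $k_2$, supplying a gain $2^{-|k_2-k|/6}$ --- together with the auxiliary estimate and $k_1\sim k_2$ (so that $2^{-k_1}2^{k_2}\sim 1$ and $\upsilon_{k_1}\sim\upsilon_{k_2}\lesssim 2^{\delta|k_2-k|}\upsilon_k$), we obtain
\[
\Bigl\lVert\sum_{\substack{k_1,k_2\ge k-5\\ |k_1-k_2|\le 8}}P_{k_1}A_\ell\,\partial_\ell P_{k_2}\psi_m\Bigr\rVert_{Y_k(I)}\lesssim \upsilon_k^2 b_k\sum_{k_2\ge k-5}2^{-|k_2-k|/6}2^{3\delta|k_2-k|}\lesssim \upsilon_k^2 b_k,
\]
the geometric sum converging because $\delta\ll 1$.

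The main obstacle --- and the reason this term is covered neither by Lemma~\ref{lem:Npert} nor by the high--low analysis of Lemma~\ref{lem:HL} --- is that in the high--high interaction the derivative $\partial_\ell$ falls on a factor whose frequency is comparable to, rather than smaller than, the output, so there is no spare low frequency to absorb it through the $L^2_{t,x}$ bound on $\partial_x A_x$. The remedy is to expand $A_\ell$ along the heat flow and to use two structural gains at once --- the frequency-gap decay in \eqref{bls} and the heat-time decay integrated against $ds^\prime$ --- and the delicate point is the bookkeeping needed to confirm, case by case in the trilinear decomposition, that these gains outweigh the $2^{\delta|\cdot|}$ losses from the slowly-varying frequency envelopes.
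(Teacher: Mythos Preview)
Your argument is correct and reaches the same bound $\upsilon_k^2 b_k$, but by a route different from the paper's. The paper does not first isolate a standalone $L^2_{t,x}$ estimate on $P_{k_1}A_\ell$; instead, after expanding $A_\ell$ via \eqref{A} and $\psi_s=\partial_j\psi_j+iA_j\psi_j$, it treats the resulting trilinear expression all at once: it applies \eqref{bilinear} to pull out one $\psi_x$ factor in $X$-space (namely $P_{k_1}\psi_x(s')$ when $k_1\ge k_2$, and $P_{k_3}\psi_x(0)$ when $k_2>k_1$), and then controls the remaining pair in $L^2_{t,x}$ via the bilinear smoothing \eqref{bls}; for the $A_j\psi_j$ piece, $P_{j_1}A_x$ is placed in $L^\infty_{t,x}$ using \eqref{A-energy} and \eqref{bls} is applied to the remaining $\psi_x$ pair. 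Your approach is more modular --- first establish $\lVert P_{k_1}A_\ell\rVert_{L^2_{t,x}}\lesssim 2^{-k_1}\upsilon_{k_1}^2$, then feed this into \eqref{bilinear} --- and is arguably cleaner for this particular lemma. The paper's coupled treatment, on the other hand, is the template that is reused (and genuinely needed) in the subsequent Lemmas \ref{lem:LH-1} and \ref{lem:Nnonpert}, where the interaction pattern between the internal frequencies of $A_\ell$ and the external $P_{k_3}\psi_m$ must be exploited jointly; your modular estimate would not by itself suffice there. One small remark on your write-up: in the sub-case where the undifferentiated factor carries the output frequency $k_1$, the heat integral is effectively $\int_0^\infty(1+s'2^{2k_1})^{-4}(1+s'2^{2m})^{-4}\,ds'\sim 2^{-2k_1}$, not $2^{-2m}$, and it is this (together with the geometric factor $2^m$ from the derivative) that makes the low-$m$ sum converge without invoking \eqref{bls}.
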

\begin{proof}
For the summation under consideration,
in \eqref{Adpsi} we are restricted to the range $k_3 \gtrsim k$ and $\max\{k_1, k_2\} \gtrsim k_3$.

First we treat only the $\partial_x \psi_x$ portion of the $\psi_s$ term. Hence we need to control
\[
K_1 := P_k[ \sum_{\substack{k_3 \gtrsim k \\ \max\{k_1, k_2\} \gtrsim k_3}}
\int_0^\infty
P_{k_1} \psi_x(s^\prime) \partial_x P_{k_2} \psi_x (s^\prime) ds^\prime \partial_x P_{k_3} \psi_x(0) ]
\]
Suppose $k_1 \geq k_2$. Then
we apply \eqref{bilinear}, pulling out $P_{k_1} \psi_x$ in $X_{k_1}(I)$ 
and picking up a gain of at least $2^{- \frac{|k - k_1|}{6}}$.
To the remaining term we apply \eqref{bls}.
Then
\[
\lVert K_1|_{k_1 \geq k_2} \rVert_{Y_k(I)}
\lesssim
\sum_{\substack{k_1 \gtrsim k_3 \gtrsim k \\ k_1 \geq k_2}}
b_{k_1} \upsilon_{k_2} \upsilon_{k_3} 
2^{-\frac{|k - k_1|}{6}} 2^{-\frac{|k_3 - k_2|}{2}} 2^{k_2 - k_1} 2^{k_3 - k_1}
\]
Using the frequency envelope summation properties, we can sum the geometric series.
If, on the other hand, $k_2 > k_1$, then we pull out
$P_{k_3} \psi_x$ in $X_{k_3}(I)$ using \eqref{bilinear} instead.
We obtain
\[
\lVert K_1|_{k_2 > k_1} \rVert_{Y_k(I)}
\lesssim
\sum_{\substack{k_2 \gtrsim k_3 \gtrsim k \\ k_2 > k_1}}
b_{k_3} \upsilon_{k_1} \upsilon_{k_2}
2^{-\frac{|k - k_3|}{6}} 2^{-\frac{|k_1 - k_2|}{2}} 2^{k_3 - k_2}
\]
Combining the two cases, we conclude
\begin{equation} \label{geosum}
\lVert K_1 \rVert_{Y_k(I)} \lesssim \upsilon_k^2 b_k
\end{equation}
\begin{rem}
When $k_2 > k_1$, one could pull out $P_{k_2} \psi_x$ in $X_{k_2}(I)$ rather
than $P_{k_3} \psi_x$ in $X_{k_3}(I)$; this strategy, however, poses additional challenges
when it comes to the handling the analogous $P_{j_1} A_x P_{j_2} \psi_x$ term (see below),
as the frequency localizations in the definition of the $X_k$ norm prevent us from
directly pulling out $P_{j_1} A_x$ in $L^\infty_{t,x}$.
\end{rem}
Next we consider
\[
K_2 := P_k[ \sum_{\substack{k_3 \gtrsim k \\ \max\{k_1, k_2\} \gtrsim k_3}}
\int_0^\infty
P_{k_1} \psi_x(s^\prime) 
P_{k_2}[ \sum_{j_1, j_2} (P_{j_1} A_x P_{j_2} \psi_x)(s^\prime) ] 
ds^\prime \partial_x P_{k_3} \psi_x(0) ]
\]
If $k_1 \geq k_2$, then,
as above, we apply \eqref{bilinear} and pull out $P_{k_1} \psi_x$ in $X_{k_1}(I)$.
To the rest we apply \eqref{bls}, but pull out $P_{j_1} A_x$ in $L^\infty$, i.e.,
\[
\begin{split}
\lVert
P_{k_2}[(P_{j_1} A_x P_{j_2} \psi_x)(s^\prime) ] 
\partial_x P_{k_3} \psi_x(0)
\rVert_{L^2_{t,x}}
&\lesssim
\lVert P_{j_1} A_x(s^\prime) \rVert_{L^\infty_{t,x}} 
\lVert P_{j_2} \psi_x(s^\prime) \cdot P_{k_3} \partial_x \psi_x(s^\prime) \rVert_{L^2_{t,x}}
\\
&\lesssim
\lVert P_{j_1} A_x(s^\prime) \rVert_{L^\infty_{t,x}} 
2^{k_3} 2^{- \frac{|k_3 - j_2|}{2}} \upsilon_{j_2} \upsilon_{k_3} (1 + s 2^{2j_2})^{-1} (1 + s 2^{2k_3})^{-1}
\end{split}
\]
We use \eqref{A-energy} to obtain
\[
\lVert P_{j_1} A_x(s^\prime) \rVert_{L^\infty_{t,x}} \lesssim_{E_0} 2^{j_1} (1 + s 2^{2j_1})^{-1}
\]
When $j_1, j_2 \lesssim k_2$, the contribution in $K_2$ is weaker than the corresponding
contribution from $\partial_x P_{k_2} \psi_x$ in $K_1$.
The summation of $j_1$ up to $k_2$ is taken directly and
the summation of $j_2$ up to $k_2$ is achieved using the decay from the application of \eqref{bls}:
\[
\lVert K_2|_{\substack{k_1 \geq k_2 \\ j_1, j_2 \leq k_2}} \rVert_{Y_k(I)}
\lesssim
\sum_{\substack{k_1 \gtrsim k_3 \gtrsim k \\ k_1 \geq k_2}}
\sum_{j_1, j_2 \leq k_2}
b_{k_1} \upsilon_{j_2} \upsilon_{k_3}
2^{- \frac{|k - k_1|}{6}} 2^{- \frac{|k_3 - j_2|}{2}} 2^{j_1 + k_3} 2^{- 2k_1}
\]
When $j_1 \sim j_2 \gtrsim k_2$, we 
have extra decay from the heat flow and from \eqref{bls}, which enable us to sum:
\[
\lVert K_2|_{\substack{k_1 \geq k_2 \\ j_1 \sim j_2 \geq k_2}} \rVert_{Y_k(I)}
\lesssim
\sum_{\substack{k_1 \gtrsim k_3 \gtrsim k \\ k_1 \geq k_2}}
\sum_{j_1\sim j_2 \geq k_2}
b_{k_1} \upsilon_{j_2} \upsilon_{k_3}
2^{- \frac{|k - k_1|}{6}} 2^{- \frac{|k_3 - j_2|}{2}} 2^{j_1 + k_3} 2^{- 2\max\{j_1, k_1\}}
\]
If $k_2 > k_1$, then we pull out $P_{k_3} \psi_x$ in $X_{k_3}(I)$ instead and control
the remaining terms in $L^2$ as above.
Combining the cases, we conclude
\[
\lVert K_2 \rVert_{Y_k(I)} \lesssim \upsilon_k^2 b_k
\]
\end{proof}
This leaves in \eqref{Adpsi-trich} only the first term
\[
\sum_{\substack{k_1\leq k - 5 \\ |k_2 - k| \leq 4}} P_{k_1} A_\ell \partial_\ell P_{k_2} \psi_m
\]
to control.
The next lemma shows that part of this term enjoys bounds similar to those
in the previous lemma.
\begin{lem} \label{lem:LH-1}
It holds that
\[
\lVert P_k [
\sum_{\substack{ |k_1 - k_2| \leq 8, k_1 \geq k - 4 \\ |k_3 - k| \leq 4}}
\int_0^\infty
P_{k_1} \psi_\ell(s^\prime)
P_{k_2} \psi_s(s^\prime) ds^\prime
\partial_\ell P_{k_3} \psi_m(0)
]
\rVert_{Y_k(I)} \lesssim \upsilon_k^2 b_k
\]
\end{lem}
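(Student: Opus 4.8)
The plan is to follow the template of the proof of Lemma~\ref{lem:HH}. The summation constraints $|k_1-k_2|\le 8$, $k_1\ge k-4$ place us exactly in the regime where the bilinear estimate \eqref{bls} is available with a genuine near-cancellation gain: the two factors constituting $A_\ell$ sit at comparable frequencies $k_1\sim k_2\ge k-4$, while the output frequency $\sim 2^k$ is carried by the remaining factor $\partial_\ell P_{k_3}\psi_m(0)$ with $|k_3-k|\le 4$. First I would expand $\psi_s$ via \eqref{SM-freeze}, writing $\psi_s(s')=\partial_j\psi_j(s')+i(A_j\psi_j)(s')$, and split the term accordingly into a piece $K_1$ carrying $\partial_j\psi_j$ and a piece $K_2$ carrying $A_j\psi_j$.

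For $K_1$ I would apply \eqref{bilinear} by extracting the factor $\partial_\ell P_{k_3}\psi_m(0)$ in $X_{k_3}(I)$ — the relevant constant in \eqref{bilinear} is $O(1)$ since $|k_3-k|\le 4$, and $\|\partial_\ell P_{k_3}\psi_m(0)\|_{X_{k_3}(I)}\lesssim 2^{k_3}b_{k_3}$ — placing the factor $\int_0^\infty P_{k_1}\psi_x(s')\,\partial_\ell P_{k_2}\psi_x(s')\,ds'$ into $L^2_{t,x}$. For this factor I would use Minkowski's inequality in $s'$ together with \eqref{bls} (both heat times equal to $s'$, indices $k_1,k_2$): since $k_1\sim k_2$, the $s'$-integral of $(1+s'2^{2k_1})^{-4}(1+s'2^{2k_2})^{-4}$ costs only $2^{-2k_1}$, so this factor is $\lesssim 2^{k_2}2^{-2k_1}\upsilon_{k_1}\upsilon_{k_2}\lesssim 2^{-k_1}\upsilon_{k_1}\upsilon_{k_2}$. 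Assembling, $\|K_1\|_{Y_k(I)}\lesssim\sum 2^{k_3-k_1}\upsilon_{k_1}\upsilon_{k_2}b_{k_3}$; the finite sums over $|k_3-k|\le 4$ and $|k_1-k_2|\le 8$ contribute only $O(1)$ and allow one (by slow variation) to replace $b_{k_3}$ by $b_k$ and $\upsilon_{k_2}$ by $\upsilon_{k_1}$, after which $\sum_{k_1\ge k-4}2^{k-k_1}\upsilon_{k_1}^2\lesssim\upsilon_k^2$ by the frequency-envelope summation rule, yielding $\|K_1\|_{Y_k(I)}\lesssim\upsilon_k^2 b_k$.

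For $K_2$ I would argue in the same way — again extract $\partial_\ell P_{k_3}\psi_m(0)$ in $X_{k_3}(I)$ — but now the $L^2_{t,x}$ factor contains the additional internal product $P_{k_2}[(P_{j_1}A_x)(P_{j_2}\psi_x)](s')$ with $k_2\lesssim\max\{j_1,j_2\}$. Following Lemma~\ref{lem:HH}, I would pull $P_{j_1}A_x(s')$ out in $L^\infty_{t,x}$ using the caloric-gauge bound $\|P_{j_1}A_x(s')\|_{L^\infty_{t,x}}\lesssim 2^{j_1}(1+s'2^{2j_1})^{-1}$ from \eqref{A-energy}, and control the surviving bilinear factor $P_{k_1}\psi_x(s')\,P_{j_2}\psi_x(s')$ by \eqref{bls}. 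One then separates the case $j_1,j_2\lesssim k_2$ — where the $j_1$-sum is a finite geometric series dominated by its top term $2^{k_2}$ and the $j_2$-sum converges against the heat-flow/\eqref{bls} decay, so the contribution is no larger than that of $K_1$ — from the case $j_1\sim j_2\gtrsim k_2$, where one has strictly more $s'$-decay after integration ($2^{-2\max\{j_1,k_1\}}$) together with the extra factor $2^{-|k_3-j_2|/2}$ from \eqref{bls}, which restore summability. In every case one is left with $\lesssim\upsilon_k^2 b_k$, completing the proof.

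The step I expect to be the main obstacle is the $(j_1,j_2)$ bookkeeping in $K_2$: retaining the near-cancellation gain of \eqref{bls} for the pair $P_{k_1}\psi_x\cdot P_{j_2}\psi_x$ despite the intervening projection $P_{k_2}$, and summing the double series against the growing weight $2^{j_1}$ without a logarithmic loss. As in Lemma~\ref{lem:HH}, this is resolved by observing that whenever $j_1$ or $j_2$ exceeds $k_2$ there is more heat-semigroup and bilinear decay available than is needed, while when both lie below $k_2$ the $2^{j_1}$-sum telescopes to the scale $2^{k_2}$, which is exactly the size already handled in $K_1$. One further point worth isolating: because we always extract the heat-time-$0$ factor $P_{k_3}\psi_m(0)$ into the $X$-space — whose $X_{k_3}$ norm is directly $\lesssim b_{k_3}$ — no heat-time-dependent $X_k$ bound (Theorem~\ref{thm:HF0}) is needed for that factor, and the heat-time-$s'$ factors enter only through \eqref{bls} and \eqref{A-energy}, which already carry the required $s'$-decay.
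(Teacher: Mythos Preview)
Your argument is correct, but it diverges from the paper's in one deliberate choice: you always extract the heat-time-zero factor $\partial_\ell P_{k_3}\psi_m(0)$ in $X_{k_3}(I)$ via \eqref{bilinear}, and then place the full $s'$-integrand in $L^2_{t,x}$ using \eqref{bls} on the pair $(P_{k_1}\psi_x(s'),P_{k_2}\psi_x(s'))$ (or $(P_{k_1}\psi_x(s'),P_{j_2}\psi_x(s'))$ for $K_2$). The paper instead pulls out $P_{k_1}\psi_x(s')$ in $X_{k_1}(I)$, exploiting that $k_1\gtrsim k_2$ here (so the ``$k_1\geq k_2$'' branch of the Lemma~\ref{lem:HH} proof applies uniformly), and applies \eqref{bls} to the pair $(P_{k_2}\psi_x(s'),P_{k_3}\psi_x(0))$. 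Your route has the advantage you noted: the factor sent to $X_{k_3}$ is at heat time $0$ and is bounded directly by $b_{k_3}\sim b_k$, so Theorem~\ref{thm:HF0} is not invoked. The paper's route requires the heat-time-$s'$ bound $\lVert P_{k_1}\psi_x(s')\rVert_{X_{k_1}}\lesssim\upsilon_{k_1}(1+s'2^{2k_1})^{-4}$ from Theorem~\ref{thm:HF0}, but in exchange keeps the proof literally parallel to the $k_1\geq k_2$ case of Lemma~\ref{lem:HH}. Both are valid; the ``intervening $P_{k_2}$'' issue you flag arises equally in either route and is handled by the standard kernel trick (writing $P_{k_2}$ as convolution against an $L^1$-normalized kernel and using translation invariance of \eqref{bls}).

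One small slip: in your discussion of the $j_1\sim j_2\gtrsim k_2$ subcase of $K_2$ you write that \eqref{bls} supplies a factor $2^{-|k_3-j_2|/2}$; in your pairing it is $2^{-|k_1-j_2|/2}$, since you apply \eqref{bls} to $P_{k_1}\psi_x\cdot P_{j_2}\psi_x$. This does not affect the conclusion.
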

\begin{proof}
The proof is analogous to that of Lemma \ref{lem:HH}.
We treat
\[
K_1 := 
P_k[ 
\sum_{\substack{ |k_1 - k_2| \leq 8, k_1 \geq k - 4 \\ |k_3 - k| \leq 4}}
\int_0^\infty P_{k_1} \psi_x(s^\prime) \partial_x P_{k_2} \psi_x(s^\prime) ds^\prime \partial_x P_{k_3} \psi_x
]
\]
and
\[
K_2 := P_k[ 
\sum_{\substack{ |k_1 - k_2| \leq 8, k_1 \geq k - 4 \\ |k_3 - k| \leq 4}}
\int_0^\infty
P_{k_1} \psi_x(s^\prime) 
P_{k_2}[ \sum_{j_1, j_2} (P_{j_1} A_x P_{j_2} \psi_x)(s^\prime) ] 
ds^\prime \partial_x P_{k_3} \psi_x(0) ]
\]
separately, but in both cases
apply \eqref{bilinear} to pull out $P_{k_1} \psi_x$ in $X_{k_1}$.
We can always apply estimate \eqref{bilinear} in this way because
here $k_1 \gtrsim k_2$.
Next we use \eqref{bls} and, in the case of $K_2$, bound $P_{j_1} A_x$
in $L^\infty_{t,x}$. We are left with summable geometric series in both cases.
\end{proof}
Combining Lemmas \ref{lem:Npert}, \ref{lem:HL}, \ref{lem:HH}, and \ref{lem:LH-1}, we conclude
\begin{lem} \label{lem:Npert2}
It holds that
\[
\lVert
P_k[
\cN_m - 2i
\sum_{\substack{k_1, k_2 \ll k \\ |k_3 - k| \leq 4}}
\int_0^\infty \Im(\overline{P_{k_1} \psi_j} P_{k_2} \psi_s)(s^\prime) ds^\prime P_{k_3} \partial_j \psi_m(0)
]
\rVert_{Y_k(I)}
\lesssim (\varepsilon^2 + \upsilon_k^2) b_k
\]
\end{lem}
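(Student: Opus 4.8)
The plan is to assemble the estimate from ingredients already established, since Lemma~\ref{lem:Npert2} is in essence a bookkeeping statement: it records which portions of $\cN_m$ have been shown to be controlled in $Y_k(I)$ and isolates the one term that still demands work in \S\ref{sec:closer}. First I would write, from the definition \eqref{NLSHN}, $\cN_m = -2i A_\ell \partial_\ell \psi_m + R_m$ with $R_m := -i(\partial_j A_j)\psi_m + (A_t + A_x^2)\psi_m - i\mu \psi_j \Im(\bar\psi_j \psi_m)$, and recall that Lemma~\ref{lem:Npert} already gives $\lVert P_k R_m \rVert_{Y_k(I)} \lesssim \varepsilon^2 b_k$. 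So everything reduces to decomposing $P_k(A_\ell \partial_\ell \psi_m)$ and reading off the non-perturbative remainder.

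Next I would invoke the Littlewood--Paley trichotomy \eqref{Adpsi-trich}, which splits $P_k(A_\ell \partial_\ell \psi_m)$ into a high--low piece ($A_\ell$ at frequency $\sim 2^k$, $\psi_m$ at frequency $\ll 2^k$), a high--high piece ($A_\ell$ and $\psi_m$ at comparable frequencies $\gtrsim 2^k$), and a low--high piece ($A_\ell$ at frequency $\ll 2^k$, $\psi_m$ at frequency $\sim 2^k$). The high--low piece is bounded by $\varepsilon^2 b_k$ via Lemma~\ref{lem:HL}, and the high--high piece by $\upsilon_k^2 b_k$ via Lemma~\ref{lem:HH}. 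For the low--high piece I would expand $A_\ell$ using \eqref{A} together with $D_j\psi_j = \psi_s$, writing it as $-\int_0^\infty \Im(\bar\psi_\ell \psi_s)(s')\,ds'$, and then Littlewood--Paley decompose $\psi_\ell$ and $\psi_s$ inside the integral, as in \eqref{Adpsi}. Because the output frequency of the product $P_{k_1}\psi_\ell \cdot P_{k_2}\psi_s$ must be $\ll 2^k$, one is forced either into the regime $k_1 \sim k_2 \gtrsim k$ of delicate cancellation, where Lemma~\ref{lem:LH-1} supplies the bound $\upsilon_k^2 b_k$, or into the regime $k_1, k_2 \ll k$, which is precisely the term subtracted off in the statement. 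Adding the four contributions via the triangle inequality, and using the trivial bounds $\varepsilon^2 b_k \le (\varepsilon^2 + \upsilon_k^2) b_k$ and $\upsilon_k^2 b_k \le (\varepsilon^2 + \upsilon_k^2) b_k$, gives the claim.

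The point I expect to require the most care---and the one I would present in detail---is verifying that the case analysis in the last step is genuinely exhaustive: that after expanding $A_\ell$, the low--high portion of $P_k(A_\ell \partial_\ell \psi_m)$ is covered exactly by the hypotheses of Lemma~\ref{lem:LH-1} together with the retained low-frequency term, with no gap in the intermediate frequency ranges and no double counting, and likewise that the high--low, high--high, and low--high regimes of \eqref{Adpsi-trich} partition $P_k(A_\ell \partial_\ell \psi_m)$ without overlap. This amounts to matching the various frequency thresholds in the statements of Lemmas~\ref{lem:HL}, \ref{lem:HH}, and \ref{lem:LH-1} against one another; once that is checked the estimate follows at once, with no analytic input beyond the cited lemmas.
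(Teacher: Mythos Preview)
Your proposal is correct and matches the paper's approach exactly: the paper's proof of this lemma is the single sentence ``Combining Lemmas~\ref{lem:Npert}, \ref{lem:HL}, \ref{lem:HH}, and \ref{lem:LH-1}, we conclude,'' and your write-up is a faithful expansion of precisely that combination, including the correct identification of the low--high remainder as the subtracted term.
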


Now we pick up with the term of $\cN_m$ left unaddressed by Lemma \ref{lem:Npert2}.
The term does not enjoy bounds like those above, but does still obey bounds good
enough for closing a bootstrap argument.
This approach to closing the argument is the same as the one used in \cite{Sm10, Sm11}.
\begin{lem} \label{lem:Nnonpert}
It holds that
\[
\lVert
P_k \sum_{\substack{k_1, k_2 \ll k \\ |k_3 - k| \leq 4}}
\int_0^\infty \Im(\overline{P_{k_1} \psi_j} P_{k_2} \psi_s)(s^\prime) ds^\prime P_{k_3} \partial_j \psi_m(0)
\rVert_{Y_k(I)}
\lesssim \upsilon_k \sum_{j \leq k - C_1} b_j \upsilon_j
\]
\end{lem}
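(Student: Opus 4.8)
The plan is to expand the undifferentiated factor $\psi_s$ via the heat-flow analogue \eqref{HF-freeze} of \eqref{SM-freeze}, writing $\psi_s=\partial_\ell\psi_\ell+iA_\ell\psi_\ell$, and to treat the two resulting contributions separately. The $iA_\ell\psi_\ell$ contribution carries an extra factor $A_x$, which along the heat flow obeys $\lVert P_{j'}A_x(s')\rVert_{L^\infty_{t,x}}\lesssim 2^{j'}(1+s'2^{2j'})^{-1}$ by \eqref{A-energy}; placing that factor in $L^\infty_{t,x}$ and then applying \eqref{bls} to the residual bilinear reduces it to a genuinely trilinear expression that is strictly better behaved than the main term (one more factor of $\psi_x$, more heat-time decay), to be handled exactly as the term $K_2$ in the proof of Lemma \ref{lem:HH}. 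I therefore concentrate on the main piece
\[
T_k:=\sum_{\substack{k_1,k_2\ll k\\ |k_3-k|\le4}}P_k\Big[\int_0^\infty\Im\big(\overline{P_{k_1}\psi_j(s')}\,\partial_\ell P_{k_2}\psi_\ell(s')\big)\,\partial_j P_{k_3}\psi_m(0)\,ds'\Big].
\]

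First I would bring the heat-time integral outside by Minkowski's inequality. For fixed $s'$ the integrand is $P_k$ of a product of $P_{k_1}\psi_j(s')$, localized to frequencies $\sim 2^{k_1}\ll 2^k$, and the remaining factor $g(s'):=\partial_\ell P_{k_2}\psi_\ell(s')\cdot\partial_j P_{k_3}\psi_m(0)$, localized to frequencies $\sim 2^{k_3}\sim 2^k$. This is the low-high regime of \eqref{bilinear}: applied with $h=g(s')\in L^2_{t,x}$ and $f=P_{k_1}\psi_j(s')$ it gains a factor $2^{-|k_1-k|/2}$ when $k_1\le k-80$, while the $O(1)$ intermediate values $k-80<k_1\ll k$ fall under the no-gain case of \eqref{bilinear} and are harmless. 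Theorem \ref{thm:HF0} bounds the $X_{k_1}$-norm of $P_{k_1}\psi_j(s')$ by $\upsilon_{k_1}(1+s'2^{2k_1})^{-4}$, and \eqref{bls}, applied with heat times $s'$ and $0$ after transferring the two spatial derivatives to factors $2^{k_2}$ and $2^{k_3}$ (legitimate since \eqref{bls} is stable under multipliers localized to the relevant shells), gives
\[
\lVert g(s')\rVert_{L^2_{t,x}}\lesssim 2^{k_2}2^{k_3}2^{-|k_2-k_3|/2}\upsilon_{k_2}\upsilon_{k_3}(1+s'2^{2k_2})^{-4}=2^{3k_2/2}2^{k_3/2}\upsilon_{k_2}\upsilon_{k_3}(1+s'2^{2k_2})^{-4},
\]
using $k_2\ll k_3\sim k$ in the last step.

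Carrying out the heat integral with $\int_0^\infty(1+s'2^{2k_1})^{-4}(1+s'2^{2k_2})^{-4}\,ds'\lesssim 2^{-2\max\{k_1,k_2\}}$ and using $\upsilon_{k_3}\sim\upsilon_k$, $2^{k_3/2}\sim 2^{k/2}$, the crucial cancellation is that the bilinear gain $2^{-|k_1-k|/2}$ combines with $2^{k_3/2}2^{-|k_2-k_3|/2}=2^{k_2/2}$ and the extra derivative $2^{k_2}$ to leave
\[
\lVert T_k\rVert_{Y_k(I)}\lesssim\upsilon_k\sum_{k_1,k_2\ll k}2^{k_1/2}\,2^{3k_2/2}\,2^{-2\max\{k_1,k_2\}}\,\upsilon_{k_1}\upsilon_{k_2}.
\]
Splitting into $k_1\le k_2$ and $k_2<k_1$ and invoking the frequency-envelope summation rules (the exponents $\tfrac12$ and $\tfrac32$ both exceed $\delta$) collapses the double sum: in the first case one sums $k_1$ against $2^{k_1/2}\upsilon_{k_1}\lesssim 2^{k_2/2}\upsilon_{k_2}$ and is left with $\sum_{k_2\le k-C_1}\upsilon_{k_2}^2$; in the second one sums $k_2$ against $2^{3k_2/2}\upsilon_{k_2}\lesssim 2^{3k_1/2}\upsilon_{k_1}$ and is left with $\sum_{k_1\le k-C_1}\upsilon_{k_1}^2$. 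Since $\upsilon_j\lesssim b_j$, this yields $\lVert T_k\rVert_{Y_k(I)}\lesssim\upsilon_k\sum_{j\le k-C_1}b_j\upsilon_j$, as claimed.

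The step I expect to be the main obstacle is not the main term above but the $iA_\ell\psi_\ell$ piece of $\psi_s$: making its trilinear frequency summations converge requires carefully exploiting the heat-time decay of $P_{j'}A_x(s')$ from \eqref{A-energy} in tandem with the decay of \eqref{bls}, much as in the $K_2$ analysis of Lemma \ref{lem:HH}, and one must in particular keep track of the case in which the frequency of $A_x$ is comparable to or larger than $k_2$. A secondary point to verify is that the inputs to \eqref{bilinear} genuinely lie in $L^2_{t,x}$, but this is immediate from \eqref{bls} and \eqref{A-energy}; beyond that the argument is routine bookkeeping of geometric series and heat-time integrals.
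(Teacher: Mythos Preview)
Your approach is essentially the same as the paper's: both expand $\psi_s=\partial_\ell\psi_\ell+iA_\ell\psi_\ell$, pull out the low-frequency factor $P_{k_1}\psi_x(s')$ in $X_{k_1}$ via the $j\le k-80$ case of \eqref{bilinear} to gain $2^{-|k_1-k|/2}$, apply \eqref{bls} to the remaining bilinear expression, integrate in $s'$ using the heat decay to obtain $2^{-2\max\{k_1,k_2\}}$, and then sum the resulting geometric series; the $A_x\psi_x$ piece is handled by placing $P_{j_1}A_x$ in $L^\infty_{t,x}$ exactly as in the $K_2$ analysis. The only cosmetic difference is that you invoke Theorem~\ref{thm:HF0} to bound the pulled-out factor by $\upsilon_{k_1}(1+s'2^{2k_1})^{-4}$, whereas the paper writes $b_{k_1}$ for that factor; since $\upsilon_j\lesssim b_j$ the two give the same final bound $\upsilon_k\sum_{j\le k-C_1}b_j\upsilon_j$.
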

\begin{proof}
As in the proofs of Lemmas \ref{lem:HH} and \ref{lem:LH-1}, we use \eqref{bilinear}
to pull out $P_{k_1} \psi_x$ in $X_{k_1}(I)$. Here, however, we need the full one-half
power of decay afforded us by \eqref{bilinear}. As previously, to the remaining term we apply
\eqref{bls} to control it in $L^2$. When we consider the $\partial_x \psi_x$ portion of $\psi_s$, we have
the bound
\[
\sum_{\substack{k_1, k_2 \ll k \\ |k_3 - k| \leq 4}} b_{k_1} \upsilon_{k_2} \upsilon_{k_3}
2^{- \frac{|k_1 - k|}{2}} 2^{- \frac{|k_3 - k_2|}{2}} 2^{k_2 + k_3} 2^{- 2\max\{k_1, k_2\}}
\]
In this expression $k_1$ and $k_2$ play symmetric roles and so we do not need to consider
the cases $k_1 \geq k_2$ and $k_2 > k_1$ separately. The decay along the heat flow
allows us to sum to the diagonal, but not all the way up to $k$: there is no extra source
of decay along the diagonal $k_1 = k_2$,
which is why the best bound we can achieve for this term
is $\upsilon_k \sum_{j \leq k - C_1} b_j \upsilon_j$.

As for the $P_{j_1} A_x(s^\prime)$ term appearing in the expansion
of $\psi_s$, we place it in $L^\infty_{t,x}$ as done previously, and recover summation to the diagonal.
\end{proof}

Combining Proposition \ref{prop:MainLinearEstimate} and Lemmas \ref{lem:Npert2} and \ref{lem:Nnonpert},
we obtain
\[
\lVert P_k \psi_m \rVert_{X_k(I)}
\lesssim
\lVert P_k \psi_m(t_0) \rVert_{L^2_x} + (\varepsilon^2 + \upsilon_k^2) b_k + 
\upsilon_k \sum_{j \leq k - C_1} b_j \upsilon_j
\]
The initial data is included in the $\upsilon_k$ envelope; moreover, by using the frequency
envelope property, we may absorb the $\upsilon_k^2 b_k$ term in the summation.
Therefore
\[
b_k \lesssim \upsilon_k +
\upsilon_k \sum_{j \leq k - C_1} b_j \upsilon_j
\]
Squaring and applying Cauchy-Schwarz yields
\begin{equation}
b_k^2 \lesssim  (1 +  \sum_{j \leq k - C_1} b_j^2 ) \upsilon_k^2
\label{SCS}
\end{equation}
Setting
\begin{equation*}
B_k := 1 + \sum_{j < k} b_j^2
\end{equation*}
in (\ref{SCS}) leads to
\begin{equation*}
B_{k+1} \leq B_k (1 + C_2 \upsilon_k^2)
\end{equation*}
with $C_2 > 0$ independent of $k$.
Therefore
\begin{equation*}
B_{k + m} 
\leq 
B_k \prod_{\ell = 1}^m (1 + C_2 \upsilon_{k + \ell}^2)
\leq
B_k \exp(C_2 \sum_{\ell = 1}^m \upsilon_{k + \ell}^2)
\lesssim B_k
\end{equation*}
Since $B_k \to 1$ as $k \to -\infty$, we conclude
\begin{equation*}
B_k \lesssim 1
\end{equation*}
uniformly in $k$, so that, in particular,
\begin{equation}
\sum_{j \in \Z} b_j^2
\lesssim 1
\label{b l2 bounded}
\end{equation}
which, joined with (\ref{SCS}), implies
\[
b_k \lesssim \upsilon_k
\]
as desired.

\emph{Acknowledgments.} 
The second author thanks Ioan Bejenaru for helpful conversations and encouragement in connection with
pursuing this project.

\bibliography{Smbib}

\def\cprime{$'$}
\providecommand{\bysame}{\leavevmode\hbox to3em{\hrulefill}\thinspace}
\providecommand{\MR}{\relax\ifhmode\unskip\space\fi MR }
\providecommand{\MRhref}[2]{%
  \href{http://www.ams.org/mathscinet-getitem?mr=#1}{#2}
}
\providecommand{\href}[2]{#2}
\begin{thebibliography}{10}

\bibitem{BeIoKeTa11b}
I.~Bejenaru, A.~Ionescu, C.~E. Kenig, and D.~Tataru, \emph{{Equivariant
  {S}chr\"odinger maps in two spatial dimensions}}, ArXiv e-prints: 1112.6122
  (2011).

\bibitem{BeIoKeTa12}
I.~{Bejenaru}, A.~{Ionescu}, C.~E. {Kenig}, and D.~{Tataru}, \emph{{Equivariant
  {S}chr\"odinger Maps in two spatial dimensions: the $\mathbb{H}^2$ target}},
  ArXiv e-prints: 1212.2566 (2012).

\bibitem{BeIoKe07}
I.~Bejenaru, A.~D. Ionescu, and C.~E. Kenig, \emph{Global existence and
  uniqueness of {S}chr\"odinger maps in dimensions {$d\geq 4$}}, Adv. Math.
  \textbf{215} (2007), no.~1, 263--291. \MR{2354991 (2008i:35124)}

\bibitem{BeIoKeTa11}
I.~Bejenaru, A.~D. Ionescu, C.~E. Kenig, and D.~Tataru, \emph{Global
  {S}chr\"odinger maps in dimensions {$d\geq 2$}: small data in the critical
  {S}obolev spaces}, Ann. of Math. (2) \textbf{173} (2011), no.~3, 1443--1506.
  \MR{2800718}

\bibitem{Be08a}
Ioan Bejenaru, \emph{Global results for {S}chr\"odinger maps in dimensions
  {$n\geq 3$}}, Comm. Partial Differential Equations \textbf{33} (2008),
  no.~1-3, 451--477. \MR{2398238 (2009h:35392)}

\bibitem{Be08b}
\bysame, \emph{On {S}chr\"odinger maps}, Amer. J. Math. \textbf{130} (2008),
  no.~4, 1033--1065. \MR{2427007 (2009g:35296)}

\bibitem{ChShUh00}
Nai-Heng Chang, Jalal Shatah, and Karen Uhlenbeck, \emph{Schr\"odinger maps},
  Comm. Pure Appl. Math. \textbf{53} (2000), no.~5, 590--602. \MR{1737504
  (2001a:35161)}

\bibitem{Do12}
B.~{Dodson}, \emph{Bilinear {S}trichartz estimates for the {S}chr\"odinger map
  problem}, ArXiv e-prints: 1210:5255 (2012).

\bibitem{GuKo11}
S.~{Gustafson} and E.~{Koo}, \emph{{Global well-posedness for 2D radial
  {S}chr\"odinger maps into the sphere}}, ArXiv e-prints: 1105.5659 (2011).

\bibitem{IoKe06}
Alexandru~D. Ionescu and Carlos~E. Kenig, \emph{Low-regularity {S}chr\"odinger
  maps}, Differential Integral Equations \textbf{19} (2006), no.~11,
  1271--1300. \MR{2278007 (2009m:35485)}

\bibitem{IoKe07b}
\bysame, \emph{Low-regularity {S}chr\"odinger maps. {II}. {G}lobal
  well-posedness in dimensions {$d\geq 3$}}, Comm. Math. Phys. \textbf{271}
  (2007), no.~2, 523--559. \MR{2287916 (2009m:35486)}

\bibitem{La67}
L.~D. Landau, \emph{Collected papers of {L}. {D}. {L}andau}, Edited and with an
  introduction by D. ter Haar. Second printing, Gordon and Breach Science
  Publishers, New York, 1967. \MR{0237287 (38 \#5577)}

\bibitem{MaPaSo94}
L.~Martina, O.~K. Pashaev, and G.~Soliani, \emph{Quantization of planar
  ferromagnets in the {C}hern-{S}imons representation}, Teoret. Mat. Fiz.
  \textbf{99} (1994), no.~3, 450--461. \MR{1308811 (95j:81150)}

\bibitem{Mc07}
Helena McGahagan, \emph{An approximation scheme for {S}chr\"odinger maps},
  Comm. Partial Differential Equations \textbf{32} (2007), no.~1-3, 375--400.
  \MR{2304153 (2009e:35265)}

\bibitem{NaStUh03}
Andrea Nahmod, Atanas Stefanov, and Karen Uhlenbeck, \emph{On the
  well-posedness of the wave map problem in high dimensions}, Comm. Anal. Geom.
  \textbf{11} (2003), no.~1, 49--83. \MR{2016196 (2004h:58042)}

\bibitem{PaTo91}
N.~Papanicolaou and T.~N. Tomaras, \emph{Dynamics of magnetic vortices},
  Nuclear Phys. B \textbf{360} (1991), no.~2-3, 425--462. \MR{1118793
  (92c:82114)}

\bibitem{Sm10}
Paul Smith, \emph{Conditional global regularity of {S}chr\"odinger maps:
  sub-threshold dispersed energy}, to appear in Analysis \& PDE.

\bibitem{Sm11}
\bysame, \emph{{Global regularity of critical {S}chr\"odinger maps:
  subthreshold dispersed energy}}, ArXiv e-prints: 1112.0251 (2011).

\bibitem{Sm09}
\bysame, \emph{Geometric renormalization below the ground state}, Int. Math.
  Res. Not. IMRN (2012), no.~16, 3800--3844. \MR{2959028}

\bibitem{St77}
Robert~S. Strichartz, \emph{Restrictions of {F}ourier transforms to quadratic
  surfaces and decay of solutions of wave equations}, Duke Math. J. \textbf{44}
  (1977), no.~3, 705--714. \MR{0512086 (58 \#23577)}

\bibitem{SuSuBa86}
P.-L. Sulem, C.~Sulem, and C.~Bardos, \emph{On the continuous limit for a
  system of classical spins}, Comm. Math. Phys. \textbf{107} (1986), no.~3,
  431--454. \MR{866199 (87k:82024)}

\bibitem{Tao04}
Terence Tao, \emph{Geometric renormalization of large energy wave maps},
  Journ\'ees ``\'{E}quations aux {D}\'eriv\'ees {P}artielles'', \'Ecole
  Polytech., Palaiseau, 2004, pp.~Exp. No. XI, 32. \MR{2135366 (2006i:58044)}

\bibitem{T4}
\bysame, \emph{{Global regularity of wave maps {IV}. {A}bsence of stationary or
  self-similar solutions in the energy class}}, ArXiv e-prints: 0806.3592
  (2008).

\end{thebibliography}
\bibliographystyle{amsplain}

\end{document}